\newtheorem{theorem}{Theorem}[section]
\newtheorem{prop}[theorem]{Proposition}
\newtheorem{cor}[theorem]{Corollary}
\newtheorem{lem}[theorem]{Lemma}
\newtheorem{mainthm}{Theorem}
\theoremstyle{definition}
\newtheorem{defn}[theorem]{Definition}
\newtheorem{rem}[theorem]{Remark}
\newtheorem{ex}[theorem]{Example}
\def\R{{\mathbb{R}}}
\def\N{{\mathbb{N}}}
\def\C{{\mathbb{C}}}
\def\Q{{\mathbb{Q}}}
\def\Z{{\mathbb{Z}}}
\newcommand{\U}{\widehat{\mathcal{U}}}
\begin{document}
\title[Rational K-stability of continuous $C(X)$-algebras]{Rational K-stability of continuous $C(X)$-algebras}
\author[Apurva Seth, Prahlad Vaidyanathan]{Apurva Seth, Prahlad Vaidyanathan}
\address{Department of Mathematics\\ Indian Institute of Science Education and Research Bhopal\\ Bhopal ByPass Road, Bhauri, Bhopal 462066\\ Madhya Pradesh. India.}
\email{apurva17@iiserb.ac.in, prahlad@iiserb.ac.in}
\date{}
\subjclass[2010]{Primary 46L85; Secondary 46L80}
\keywords{Nonstable K-theory, C*-algebras}
\maketitle
\parindent 0pt

\begin{abstract}
We show that the property of being rationally $K$-stable passes from the fibers of a continuous $C(X)$-algebra to the ambient algebra, under the assumption that the underlying space $X$ is compact, metrizable, and of finite covering dimension. As an application, we show that a crossed product C*-algebra is (rationally) K-stable provided the underlying C*-algebra is (rationally) K-stable, and the action has finite Rokhlin dimension with commuting towers.
\end{abstract}

%\tableofcontents

Given a compact Hausdorff space $X$, a continuous $C(X)$-algebra is the section algebra of a continuous field of C*-algebras over $X$. Such algebras form an important class of non-simple C*-algebras, and it is often of interest to understand those properties of a C*-algebra which pass from the fibers to the ambient $C(X)$-algebra. \\

Given a unital C*-algebra, we write $\mathcal{U}_n(A)$ for the group of $n\times n$ unitary matrices over $A$. This is a topological group, and its homotopy groups $\pi_j(\mathcal{U}_n(A))$ are termed the \emph{nonstable $K$-theory} groups of $A$. These groups were first systematically studied by Rieffel \cite{rieffel2} in the context of noncommutative tori. Thomsen \cite{thomsen} built on this work, and developed the notion of quasi-unitaries, thus constructing a homology theory for (possibly non-unital) C*-algebras. \\

Unfortunately, the nonstable K-theory for a given C*-algebra is notoriously difficult to compute explicitly. Even for the algebra of complex numbers, these groups are naturally related to the homotopy groups of spheres $\pi_j(S^n)$, which are not known for many values of $j$ and $n$. It is here that rational homotopy theory has proved to be useful to topologists and, in this paper, we employ this tool in the context of C*-algebras. \\

A C*-algebra $A$ is said to be \emph{$K$-stable} if the homotopy groups $\pi_j(\mathcal{U}_n(A))$ are naturally isomorphic to the $K$-theory groups $K_{j+1}(A)$, and \emph{rationally} K-stable if the analogous statement holds for the rational homotopy groups (see \cref{defn:k_stable}). In \cite{vaidyanathan}, we proved that, for a continuous $C(X)$-algebras, the property of being $K$-stable passes from the fibers to the whole algebra, provided the underlying space $X$ is metrizable and has finite covering dimension. The goal of this paper is to prove an analogous result for rational $K$-stability.

\begin{mainthm}\label{main_thm}
Let $X$ be a compact metric space of finite covering dimension and let $A$ be a continuous $C(X)$-algebra. If each fibre of $A$ is rationally $K$-stable, then so is $A$.
\end{mainthm}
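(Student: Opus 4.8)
The plan is to run the argument of \cite{vaidyanathan}, which proves the integral analogue of \cref{main_thm}, and to check that it is compatible with rationalization. The guiding principle is that the class of homomorphisms of abelian groups which become isomorphisms after $-\otimes\Q$ --- equivalently, those with finite kernel and cokernel --- is closed under the five lemma and under filtered colimits, and that $-\otimes\Q$ is exact, so that every long exact sequence of (nonstable) $K$-groups remains exact after tensoring with $\Q$. In view of \cref{defn:k_stable} it therefore suffices to show that for every $j\ge 0$ and every sufficiently large $n$ the canonical comparison map
\[
\Phi_{n,j}^{A}\colon \pi_j\bigl(\mathcal{U}_n(A)\bigr)\otimes\Q \longrightarrow K_{j+1}(A)\otimes\Q
\]
is an isomorphism. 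This map is natural for $*$-homomorphisms, in particular for the restriction maps coming from the $C(X)$-algebra structure of $A$, which is what lets the inductive machinery below proceed.

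\textbf{Reduction to finite complexes and induction on dimension.} When $X$ is compact metric of finite covering dimension one may, following the approximation technique for continuous fields over finite-dimensional spaces, realize $X$ as an inverse limit of finite simplicial complexes $K_m$ with $\dim K_m\le\dim X$ and $A$ as an inductive limit of continuous $C(K_m)$-algebras whose fibers are again rationally $K$-stable. Since $\pi_j\circ\mathcal{U}_n$ and $K_{j+1}$ carry such inductive limits to colimits and $-\otimes\Q$ commutes with colimits, it is enough to treat $X=K$ a finite complex. One then inducts on $\dim K$: writing $K^{(k)}=K^{(k-1)}\cup(\text{$k$-cells})$ yields a pullback
\[
A|_{K^{(k)}}\;\cong\;A|_{K^{(k-1)}}\ \oplus_{A|_{\partial}}\ A|_{\bigsqcup D^{k}},
\]
$\partial$ denoting the disjoint union of attaching spheres. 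Feeding the Mayer--Vietoris sequence in $K$-theory and the corresponding exact sequence in nonstable $K$-theory of Thomsen \cite{thomsen} into a commutative ladder, rationalizing, and applying the five lemma, one gets that $\Phi^{A}_{n,j}\otimes\Q$ is an isomorphism provided the analogous maps are for the three algebras on the right; the first two are covered by the dimension induction (the $(k-1)$-skeleton of $\bigsqcup D^{k}$ being $\bigsqcup S^{k-1}$), and the bottom of the induction is the zero-dimensional case.

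\textbf{Zero-dimensional base case.} For $\dim X=0$ the space $X$ is an inverse limit of finite discrete spaces, so the associated clopen partitions express $A$ as $\bigoplus_{U}A|_U$ with the partitions refining and $\operatorname{diam}U\to 0$. Comparing $\mathcal{U}_n(A|_U)$ with $\mathcal{U}_n(A_x)$ for $x\in U$ as $\operatorname{diam}U\to 0$ and passing to the colimit over partitions --- harmless for $\pi_j$ and for $-\otimes\Q$, and with $\lim^{1}$ contributions that vanish rationally --- reduces rational $K$-stability of $A$ to that of its fibers, which is the hypothesis. This is the only step where the assumption on the fibers is used directly.

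\textbf{Main obstacle.} The delicate point, exactly as in the integral case, is the first half of the inductive step: one needs the nonstable $K$-theory exact/Mayer--Vietoris sequences to exist, to be natural in the $C(X)$-algebra structure, and to align compatibly with the $K$-theory six-term sequence \emph{within a range of degrees and matrix sizes that stays uniform along the induction}. Since $\mathcal{U}_n(A)\to\mathcal{U}_n(A|_Y)$ need not be surjective and the nonstable $K$-functors are only half-exact in a limited range, $\Phi_{n,j}^{A}$ is controlled only once $n$ exceeds a bound governed by the stable and connected stable ranks of $A$, and for continuous $C(X)$-algebras these are bounded in terms of $\dim X$ together with the ranks of the fibers; keeping this bound uniform is precisely where finite covering dimension enters. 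Rationalization does not dissolve this difficulty, but it lightens it: it is enough that the relevant maps be rational isomorphisms, so the torsion obstructions --- to surjectivity of unitary lifting, in the $\lim^{1}$ terms of the zero-dimensional argument, and in the ladders --- may simply be discarded.
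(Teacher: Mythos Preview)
There is a genuine gap in your inductive step. In the pullback
\[
A|_{K^{(k)}}\;\cong\;A|_{K^{(k-1)}}\ \oplus_{A|_{\partial}}\ A|_{\bigsqcup D^{k}},
\]
the five-lemma argument on the Mayer--Vietoris ladder requires the comparison map $\Phi_{n,j}$ to be a rational isomorphism for \emph{all three} algebras on the right. The pieces $A|_{K^{(k-1)}}$ and $A|_{\partial}$ indeed have dimension $\le k-1$, but $A|_{\bigsqcup D^{k}}$ is a continuous $C(\bigsqcup D^k)$-algebra over a $k$-dimensional base, so it is \emph{not} covered by the dimension induction. Your parenthetical about the $(k-1)$-skeleton of $\bigsqcup D^k$ does not help: decomposing $D^k$ skeletally returns you to the same problem one step later. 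Nor does contractibility of $D^k$ save you, since a continuous $C(D^k)$-algebra need not be a trivial field, so \cref{trivial_rational} does not apply. The same circularity would arise for any closed cover $X=Y\cup Z$ with $\dim(Y\cap Z)<\dim X$: the pieces $A(Y),A(Z)$ still live over full-dimensional bases.

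This is precisely why the paper's proof cannot be a formal five-lemma argument and is instead hands-on. The paper covers $X$ by small closed sets $V_i$ whose pairwise intersections $W_j$ have lower dimension (\cref{rem:reorganize_cover}); only $A(W_j)$ is assumed rationally $K$-stable by induction. On each $V_i$ one obtains, from fibre-wise rational $K$-stability and continuity, a \emph{local} homotopy realising the desired relation in $\pi_j(\U_0(A(V_i)))$ up to a finite multiple. The hard work (\cref{prop:homotopy}, \cref{lem:local_path}) is to make these local homotopies compatible over the intersection $W_j$, and here rational $K$-stability of $A(W_j)$ is used to adjust by further powers. The gluing is then done via the pullback description (\cref{lem:cx_algebra_pullback}) and \cref{hom_pullback}, not via a five lemma. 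Your reduction to finite complexes is also problematic for the same reason: viewing $A$ as a $C(K_m)$-algebra via $C(K_m)\to C(X)$, the fibres over points of $K_m$ are quotients $A(Y)$ with $Y\subset X$ closed of possibly full dimension, and there is no hypothesis guaranteeing these are rationally $K$-stable.
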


As an interesting application of these results, we consider crossed product C*-algebras where the action has finite Rokhlin dimension (with commuting towers). A theorem of Gardella, Hirshberg and Santiago \cite{gardella} states that such a crossed product C*-algebra can be locally approximated by a continuous $C(X)$-algebra (see \cref{defn: seq_split}). This leads to the following result.

\begin{mainthm}\label{main_thm: rokhlin}
Let $\alpha:G\to \text{Aut}(A)$ be an action of a compact Lie group on a separable C*-algebra $A$ such that $\alpha$ has finite Rokhlin dimension with commuting towers. If $A$ is rationally $K$-stable ($K$-stable), then so is $A\rtimes_{\alpha} G$.
\end{mainthm}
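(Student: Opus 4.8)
The plan is to derive \cref{main_thm: rokhlin} from \cref{main_thm}, using the structure theorem of Gardella--Hirshberg--Santiago as input and a transfer principle for sequentially split inclusions as the bridge. By \cite{gardella}, since $\alpha$ has finite Rokhlin dimension with commuting towers, the crossed product $A \rtimes_\alpha G$ is locally approximated by a continuous $C(X)$-algebra in the sense of \cref{defn: seq_split}; concretely, there is a continuous $C(X)$-algebra $B$ together with a sequentially split $\ast$-homomorphism $\theta\colon A \rtimes_\alpha G \to B$. The base space $X$ is compact, metrizable, and of finite covering dimension, because it is assembled from $G$, which --- being a compact Lie group --- is a finite-dimensional smooth manifold; and each fibre of $B$ is a matrix amplification of $A$ (more generally, a full corner of such an amplification).

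The first step is to observe that every fibre of $B$ is rationally $K$-stable whenever $A$ is. This is because rational $K$-stability is invariant under matrix amplification: one has $\mathcal{U}_n(M_k(A)) \cong \mathcal{U}_{nk}(A)$ and $K_\ast(M_k(A)) \cong K_\ast(A)$ compatibly with the comparison maps of \cref{defn:k_stable}, and a short supplementary argument handles full corners. Granting this, \cref{main_thm} applies to the $C(X)$-algebra $B$ and yields that $B$ itself is rationally $K$-stable. (For the assertion about $K$-stability one argues identically, quoting the corresponding theorem for continuous $C(X)$-algebras from \cite{vaidyanathan} in place of \cref{main_thm}.)

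The second, and I expect harder, step is to transport rational $K$-stability from $B$ back down to its subalgebra $A \rtimes_\alpha G$. For this I would isolate and prove a lemma of the following shape: \emph{if $\phi\colon D \to B$ is a sequentially split $\ast$-homomorphism and $B$ is rationally $K$-stable, then so is $D$} (and similarly for $K$-stability); applying it with $\phi = \theta$ then completes the proof. The technical heart --- and the main obstacle --- lies in this lemma. A sequential splitting $\psi\colon B \to D_\infty$ with $\psi\phi = \iota_D$ forces $\phi$ to be injective on $K$-theory, which is the familiar behaviour of sequentially split maps; what is needed in addition is the analogous injectivity --- indeed split-injectivity in the limit over $n$, rationally --- on the nonstable $K$-theory groups $\pi_j(\mathcal{U}_n(-))$, together with compatibility of all the maps involved with the natural comparison transformation $\pi_j(\mathcal{U}(-)) \to K_{j+1}(-)$. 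Establishing this requires a careful analysis of $\pi_j(\mathcal{U}_n(D_\infty))$ --- the homotopy of unitary groups over the sequence algebra --- and of its relationship with $\pi_j(\mathcal{U}_n(D))$; this bookkeeping is where the real work is. Once it is in place, naturality of the comparison map lets one pull the rational isomorphism for $B$ back to a rational isomorphism for $D$, and \cref{main_thm: rokhlin} follows.
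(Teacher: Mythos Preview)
Your overall architecture is correct and matches the paper's: invoke the Gardella--Hirshberg--Santiago structure theorem to obtain a sequentially split embedding of $A\rtimes_\alpha G$ into a continuous $C(X/G)$-algebra with fibres $A\otimes\mathcal{K}(L^2(G))$, apply \cref{main_thm} (or \cite{vaidyanathan} in the $K$-stable case) to the target, and then pull (rational) $K$-stability back along the sequentially split map. Two small corrections: the base is $X/G$ rather than $X$, and its finite-dimensionality is not automatic from $G$ being a Lie group alone---one needs \cite[Corollary~1.7.32]{palais_g_space} to get $\dim(X/G)\leq\dim(X)$; and the fibres are literally $A\otimes\mathcal{K}(L^2(G))$, so when $G$ is infinite they are stabilisations rather than matrix amplifications (the conclusion you need still holds, of course).

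Where your proposal diverges from the paper is in the proof of the transfer lemma. You plan to use the sequence-algebra formulation of sequentially split maps, passing through $D_\infty$ and confronting the relationship between $\pi_j(\mathcal{U}_n(D_\infty))$ and $\pi_j(\mathcal{U}_n(D))$; you rightly flag this as delicate, since homotopy groups of unitary groups do not commute with sequence-algebra constructions in any simple way. The paper sidesteps all of this by working with the \emph{local} definition (\cref{defn: seq_split}): for each compact $F\subset D$ and $\epsilon>0$ one has an honest $\ast$-homomorphism $\psi=\psi_{F,\epsilon}\colon B\to D$ with $\|\psi\phi(a)-a\|<\epsilon$ on $F$. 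Given $f\in C_\ast(S^j,\U_0(D))$, take $F=f(S^j)$ and $\epsilon<1$; then $\psi\circ\phi\circ f$ is within distance $1$ of $f$ in $\U(C_\ast(S^j,D))$, hence homotopic to it by \cref{lem:quasi_homotopy}. This immediately gives injectivity of $(\iota^D)_\ast\otimes\mathrm{id}$, and a similar (slightly more elaborate) argument handles surjectivity. The payoff is that no sequence algebra, no analysis of $D_\infty$, and none of the bookkeeping you anticipate is needed: the entire lemma becomes a short exercise in ``close quasi-unitaries are homotopic.'' Your route is plausible but strictly harder; the paper's is the one to take.
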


The paper is organized as follows: In \cref{sec: preliminaries}, we introduce the basic notions used throughout the paper - that of nonstable $K$-groups, $C(X)$-algebras, and the rationalization of $H$-spaces. In \cref{sec: main}, we prove \cref{main_thm} along with some applications and examples. Finally, \cref{sec: rokhlin} is devoted to the proof of \cref{main_thm: rokhlin}. 

\section{Preliminaries}\label{sec: preliminaries}

\subsection{Nonstable $K$-theory}\label{subsec:nonstable_k_theory}
We begin by reviewing the work of Thomsen of constructing the nonstable $K$-groups associated to a C*-algebra. For the proofs of the results mentioned in this section, the reader is referred to \cite{thomsen}. \\

Let $A$ be a C*-algebra (not necessarily unital). Define an associative composition $\cdot$ on $A$ by
\begin{equation}\label{grp_oper}
a\cdot b=a+b-ab
\end{equation}
An element $u\in A$ is said to be a quasi-unitary if
\[
u\cdot u^{\ast} = u^{\ast}\cdot u = 0.
\]
We write $\widehat{\mathcal{U}}(A)$ for the set of all quasi-unitary elements in $A$. For elements $u,v\in \U(A)$, we write $u\sim v$ if there is a continuous function $f:[0,1]\to \U(A)$ such that $f(0) = u$ and $f(1) = v$. We write $\U_0(A)$ for the set of $u\in \U(A)$ such that $u\sim 0$. Note that $\U_0(A)$ is a closed, normal subgroup of $\U(A)$. We now define the two functors we are interested in.
  
\begin{defn}
Let $A$ be a $C^*$-algebra, and $k\geq 0$ and $m\geq 1$ be integers. Define
\[
G_k(A) :=\pi_k(\U(A)), \text{ and } F_m(A) := \pi_m(\U_0(A))\otimes \Q \cong G_m(A)\otimes \Q.
\]
\end{defn}

Recall \cite{schochet} that a homology theory on the category of $C^{\ast}$-algebras is a sequence $\{h_n\}$ of covariant, homotopy invariant functors from the category of $C^*$-algebras to the category of abelian groups such that, if $0 \to J\xrightarrow{\iota} B\xrightarrow{p} A\to 0$ is a short exact sequence of $C^*$-algebras, then for each $n \in \N$, there exists a connecting map $\partial : h_n(A)\to h_{n-1}(J)$, making the following sequence exact
\[
\ldots \xrightarrow{\partial} h_n(J)\xrightarrow{h_n(\iota)} h_n(B) \xrightarrow{h_n(p)} h_n(A)\xrightarrow{\partial} h_{n-1}(J)\to \ldots
\]
and furthermore, $\partial$ is natural with respect to morphisms of short exact sequences. Furthermore, we say that a homology theory $\{h_n\}$ is continuous if, whenever $A = \lim A_i$ is an inductive limit in the category of $C^*$-algebras, then $h_n(A) = \lim h_n(A_i)$ in the category of abelian groups. The next proposition is a consequence of \cite[Proposition 2.1]{thomsen} and \cite[Theorem 4.4]{handelman}.

\begin{prop}\label{prop: continuous_homology}
For each $m\geq 1$, $G_m$ and $F_m$ are continuous homology theories.
\end{prop}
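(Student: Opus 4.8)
The plan is to establish that $G_m$ and $F_m$ are homology theories first, and then verify continuity separately. For the homology-theory axioms, homotopy invariance is immediate from the definition (homotopic $*$-homomorphisms induce the same map on $\pi_m$), and covariance is clear. The substantive input is the long exact sequence: given a short exact sequence $0 \to J \xrightarrow{\iota} B \xrightarrow{p} A \to 0$, I would invoke \cite[Proposition 2.1]{thomsen}, which (in Thomsen's framework of quasi-unitaries) produces a natural long exact sequence in the groups $\pi_m(\U(\cdot))$. The key point here is that $\U(\cdot)$ behaves well with respect to quotients and ideals — quasi-unitaries avoid the basepoint issues that plague the unitary group of a non-unital algebra — so that $p$ induces a fibration-like sequence up to homotopy, yielding the connecting maps $\partial : G_m(A) \to G_{m-1}(J)$ and exactness. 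Naturality of $\partial$ with respect to morphisms of short exact sequences is part of the same package. This gives that $G_m$ is a homology theory; tensoring the entire long exact sequence with $\Q$ (which is exact, since $\Q$ is flat over $\Z$) shows that $F_m = G_m \otimes \Q$ is again a homology theory, with connecting maps $\partial \otimes \mathrm{id}_\Q$.

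For continuity, suppose $A = \varinjlim A_i$ in the category of $C^*$-algebras. I would appeal to \cite[Theorem 4.4]{handelman}, which gives that $\U(\varinjlim A_i) = \varinjlim \U(A_i)$ as topological groups in an appropriate sense — more precisely, that the relevant nonstable $K$-groups commute with inductive limits. Concretely, one uses that a quasi-unitary over $A$, or a homotopy of such indexed by a sphere or cube, is a norm-limit of elements coming from the $A_i$, and that being a quasi-unitary is a closed condition stable under small perturbations, so that any continuous map $S^m \to \U(A)$ can be approximated (uniformly, hence homotoped) by one factoring through some $\U(A_i)$, and similarly for homotopies between such maps. This yields $G_m(A) = \varinjlim G_m(A_i)$. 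Since tensoring with $\Q$ commutes with direct limits of abelian groups, $F_m(A) = G_m(A) \otimes \Q = (\varinjlim G_m(A_i)) \otimes \Q = \varinjlim (G_m(A_i) \otimes \Q) = \varinjlim F_m(A_i)$, establishing continuity of $F_m$.

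I expect the main obstacle to be purely expository rather than mathematical: the result is essentially a repackaging of Thomsen's and Handelman's theorems, so the real work is in checking that the hypotheses of those cited results match the setup here — in particular that Thomsen's long exact sequence is stated (or easily seen to hold) in the degrees $m \geq 1$ we need, and that Handelman's continuity statement is phrased for the same functors $\pi_m(\U(\cdot))$ rather than some stabilized or unitized variant. Once the dictionary between the conventions is pinned down, the proof is a short citation-driven argument, with the $\Q$-coefficient statements following formally from flatness and exactness of $- \otimes \Q$ together with its commutation with filtered colimits.
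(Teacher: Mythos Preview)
Your proposal is correct and follows exactly the paper's approach: the paper's proof consists of the single sentence that the proposition is a consequence of \cite[Proposition 2.1]{thomsen} and \cite[Theorem 4.4]{handelman}, which are precisely the two references you invoke. Your additional remarks about flatness of $\Q$ and commutation with filtered colimits to pass from $G_m$ to $F_m$ are the right justifications and are left implicit in the paper.
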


The notion of $K$-stability given below is due to Thomsen \cite[Definition 3.1]{thomsen}, and that of rational $K$-stability has been studied by Farjoun and Schochet \cite[Definition 1.2]{farjoun}, where it was termed rational Bott-stability.

\begin{defn}\label{defn:k_stable}
Let $A$ be a $C^*$-algebra and $j\geq 2$. Define $\iota_j: M_{j-1}(A)\to M_j(A)$ to be the natural inclusion map
\[
a\mapsto\begin{pmatrix}
a&0\\
0&0
\end{pmatrix}.
\]
$A$ is said to be \emph{$K$-stable} if $G_k(\iota_j): G_k(M_{j-1}(A))\to G_k(M_j(A))$ is an isomorphism for all $k\geq 0$ and all $j\geq 2$. $A$ is said to be \emph{rationally $K$-stable} if $F_m(\iota_j):F_m(M_{j-1}(A))\to F_m(M_j(A))$ is an isomorphism for all $m\geq 1$ and all $j\geq 2$.
\end{defn}

Note that, for a $K$-stable $C^*$-algebra, $G_k(A) \cong K_{k+1}(A)$ and for a rationally $K$-stable $C^*$-algebra, $F_m(A) \cong K_{m+1}(A)\otimes \Q$. A variety of interesting C*-algebras are known to be $K$-stable (see \cite[Remark 1.5]{vaidyanathan}). Clearly, $K$-stability implies rational $K$-stability. By \cite[Theorem B]{seth}, the converse is true for AF-algebras. However, as \cref{ex: rat_k_stable_not_k_stable} shows, the converse is not true in general.

\subsection{$C(X)$-algebras}

Let $A$ be a $C^*$-algebra, and $X$ a compact Hausdorff space. We say that $A$ is a $C(X)$-algebra \cite[Definition 1.5]{Kasparov} if there is a unital $\ast$-homomorphism $\theta : C(X)\to \mathcal{Z}(M(A))$, where $\mathcal{Z}(M(A))$ denotes the center of the multiplier algebra of $A$. For simplicity of notation, if $f\in C(X)$ and $a\in A$, we write $fa := \theta(f)(a)$.\\

If $Y\subset X$ is closed, the set $C_0(X,Y)$ of functions in $C(X)$ that vanish on $Y$ is a closed ideal of $C(X)$. Hence, $C_0(X,Y)A$ is a closed, two-sided ideal of $A$. The quotient of $A$ by this ideal is denoted by $A(Y)$, and we write $\pi_Y : A\to A(Y)$ for the quotient map (also referred to as the restriction map). If $Z\subset Y$ is a closed subset of $Y$, we write $\pi^Y_Z : A(Y)\to A(Z)$ for the natural restriction map, so that $\pi_Z = \pi^Y_Z\circ \pi_Y$. If $Y = \{x\}$ is a singleton, we write $A(x)$ for $A(\{x\})$ and $\pi_x$ for $\pi_{\{x\}}$. The algebra $A(x)$ is called the fiber of $A$ at $x$. For $a\in A$, write $a(x)$ for $\pi_x(a)$. For each $a\in A$, we have a map
\[
\Gamma_a : X\to \R \text{ given by } x \mapsto \|a(x)\|.
\]
This map is, in general, upper semi-continuous \cite[Lemma 2.3]{kw_fields}. We say that $A$ is a \emph{continuous} $C(X)$-algebra if $\Gamma_a$ is continuous for each $a\in A$.\\

If $A$ is a $C(X)$-algebra, we will often have reason to consider other $C(X)$-algebras obtained from $A$. At that time, the following result of Kirchberg and Wasserman will be useful.
\begin{theorem}\cite[Remark 2.6]{kw_fields}\label{thm:kw_tensor}
Let $X$ be a compact Hausdorff space, and let $A$ be a continuous $C(X)$-algebra. If $B$ is a nuclear $C^*$-algebra, then $A\otimes B$ is a continuous $C(X)$-algebra whose fiber at a point $x\in X$ is $A(x)\otimes B$.
\end{theorem}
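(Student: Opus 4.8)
The plan is to check the three assertions in turn, using nuclearity of $B$ exactly where it is needed: it makes the $C^*$-tensor norm on $A\otimes B$ unambiguous, and it makes the functor $-\otimes B$ exact. That $A\otimes B$ is a $C(X)$-algebra is immediate — if $\theta\colon C(X)\to\mathcal Z(M(A))$ is the structure map, then $f\mapsto\theta(f)\otimes 1_{M(B)}$ is a unital $*$-homomorphism of $C(X)$ into $\mathcal Z(M(A\otimes B))$. For the fibres, fix a closed set $Y\subseteq X$ and apply $-\otimes B$ to the short exact sequence $0\to C_0(X,Y)A\to A\to A(Y)\to 0$ (with $A\to A(Y)$ the quotient map $\pi_Y$); exactness of $B$ yields a short exact sequence $0\to C_0(X,Y)A\otimes B\to A\otimes B\to A(Y)\otimes B\to 0$. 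One then checks that the ideal $C_0(X,Y)A\otimes B$ of $A\otimes B$ coincides with $C_0(X,Y)\cdot(A\otimes B)$ — both being the closed linear span of the elements $(fa)\otimes b$ with $f\in C_0(X,Y)$, $a\in A$, $b\in B$ — so that $\pi_Y\otimes\mathrm{id}_B$ descends to a natural isomorphism $(A\otimes B)(Y)\cong A(Y)\otimes B$; taking $Y=\{x\}$ identifies the fibre at $x$ with $A(x)\otimes B$.

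The substantive point is continuity: we must show that $x\mapsto\|c(x)\|_{A(x)\otimes B}$ is continuous for each $c\in A\otimes B$, and since this function is automatically upper semicontinuous, only lower semicontinuity is at issue. As $\bigl|\,\|c(x)\|-\|c'(x)\|\,\bigr|\le\|c-c'\|$ holds uniformly in $x$, we may assume $c=\sum_{i=1}^n a_i\otimes b_i$ lies in the algebraic tensor product. Now invoke the completely positive approximation property of the nuclear algebra $B$: choose contractive completely positive maps $\phi_\lambda\colon B\to M_{k_\lambda}(\C)$ and $\psi_\lambda\colon M_{k_\lambda}(\C)\to B$ with $\psi_\lambda\circ\phi_\lambda\to\mathrm{id}_B$ pointwise in norm. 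The map $\mathrm{id}_A\otimes\phi_\lambda$ is $C(X)$-linear, sends $c$ to an element $c_\lambda\in A\otimes M_{k_\lambda}(\C)=M_{k_\lambda}(A)$, and induces $\mathrm{id}_{A(x)}\otimes\phi_\lambda$ on each fibre; likewise for $\psi_\lambda$. Granting, as a routine step established below, that $M_{k_\lambda}(A)$ is a continuous $C(X)$-algebra with fibre $M_{k_\lambda}(A(x))$, the function $g_\lambda\colon x\mapsto\|c_\lambda(x)\|$ is continuous. Since $\mathrm{id}\otimes\phi_\lambda$ is contractive, $g_\lambda(x)\le\|c(x)\|$ for all $x$; and for each fixed $x$, $(\mathrm{id}_{A(x)}\otimes\psi_\lambda)(c_\lambda(x))=(\mathrm{id}\otimes\psi_\lambda\phi_\lambda)(c(x))\to c(x)$, which together with contractivity of $\psi_\lambda$ gives $g_\lambda(x)\to\|c(x)\|$. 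Hence, given $x_0$ and $\varepsilon>0$, one may pick $\lambda$ with $g_\lambda(x_0)>\|c(x_0)\|-\varepsilon$; continuity of $g_\lambda$ then yields a neighbourhood of $x_0$ on which $\|c(x)\|\ge g_\lambda(x)>\|c(x_0)\|-\varepsilon$. This is the lower semicontinuity we wanted.

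It remains to record the matrix case. That $C_0(X,x)M_k(A)=M_k(C_0(X,x)A)$ is clear, so $M_k(A)(x)\cong M_k(A(x))$. For continuity one applies, fibrewise, the Hilbert-module norm formula $\|[d_{ij}]\|_{M_k(D)}=\sup\{\|\sum_{ij}u_i^*d_{ij}v_j\|_D:\|\sum_i u_i^*u_i\|_D\le 1,\ \|\sum_j v_j^*v_j\|_D\le 1\}$: for $c=[a_{ij}]\in M_k(A)$ it expresses $\|c(x)\|$ as the supremum, over $u_i,v_j\in A$ with $\|\sum u_i^*u_i\|_A\le 1$ and $\|\sum v_j^*v_j\|_A\le 1$, of the continuous functions $x\mapsto\|(\sum_{ij}u_i^*a_{ij}v_j)(x)\|$. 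The inequality ``$\le$'' here is because ambient norms dominate fibre norms; for ``$\ge$'' one lifts near-optimal $u_i,v_j\in A(x)$ back to $A$ via the surjection $\pi_x$ and rescales them using upper semicontinuity so that their ambient norms are within $\varepsilon$ of their fibre norms at $x$. A pointwise supremum of continuous functions is lower semicontinuous, so $M_k(A)$ is continuous.

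I expect the crux to be this continuity argument, and specifically the bookkeeping around the three roles of nuclearity: it is needed for the tensor norm to be defined at all; its consequence \emph{exactness} (which genuinely fails for non-exact $B$) is what makes the fibres come out as $A(x)\otimes B$; and its consequence the \emph{completely positive approximation property} is what reduces continuity for arbitrary nuclear $B$ to the finite-matrix case. Each individual piece is routine, but the interaction of these three uses is the part to get right.
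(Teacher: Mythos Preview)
The paper does not give a proof of this statement; it is simply quoted from \cite[Remark 2.6]{kw_fields}. Your argument is correct and is essentially the standard one found in the literature: the fibre identification via exactness of $-\otimes B$, and the reduction of lower semicontinuity to the matrix case via the completely positive approximation property, are exactly how this is usually handled. The only place you are a bit terse is the lifting step in the matrix case: to force the lifted tuple $(\tilde u_i)$ to satisfy $\|\sum_i\tilde u_i^{\ast}\tilde u_i\|_A\le 1$, one multiplies by a Urysohn function $h\in C(X)$ with $h(x)=1$ supported in the open set $\{y:\|\sum_i\tilde u_i^{\ast}\tilde u_i(y)\|<1\}$, invoking upper semicontinuity together with the identity $\|a\|=\sup_y\|a(y)\|$ valid in any $C(X)$-algebra. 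With that detail filled in, the proof is complete.
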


In particular, if $A$ is a continuous $C(X)$-algebra, then so is $M_2(A)$. If $Y\subset X$ is a closed set, we will denote the restriction map by $\eta_Y : M_2(A)\to M_2(A(Y))$, and we write $\iota_Y : A(Y)\to M_2(A(Y))$ for the natural inclusion map. If $Y = X$, we simply write $\iota$ (or $\iota^A$) for $\iota_X$. Note that $\eta_Y \circ \iota = \iota_Y \circ \pi_Y$. Once again, if $Y = \{x\}$, we simply write $\iota_x$ for $\iota_{\{x\}}$. \\

Finally, the notion of a pullback is important for our investigation: Let $B,C,$ and $D$ be $C^*$-algebras, and $\delta : B\to D$ and $\gamma : C\to D$ be $\ast$-homomorphisms. We define the pullback of this system to be
\[
A = B\oplus_D C := \{(b,c) \in B\oplus C : \delta(b) = \gamma(c)\}.
\]
This is described by a diagram
\begin{equation}\label{eqn:pullback_diag}
\xymatrix{
A\ar[r]^{\phi}\ar[d]_{\psi} & B\ar[d]^{\delta} \\
C\ar[r]^{\gamma} & D
}
\end{equation}
where $\phi(b,c) = b$ and $\psi(b,c) = c$. The next lemma allows us to inductively put together a $C(X)$-algebra from its natural quotients. 
\begin{lem}\cite[Lemma 2.4]{mdd_finite}\label{lem:cx_algebra_pullback}
Let $X$ be a compact Hausdorff space and $Y$ and $Z$ be two closed subsets of $X$ such that $X = Y\cup Z$. If $A$ is a $C(X)$-algebra, then $A$ is isomorphic to the pullback
\[
\xymatrixcolsep{1.5cm}\xymatrix{
A\ar[r]^{\pi_Y}\ar[d]_{\pi_Z} & A(Y)\ar[d]^{\pi^Y_{Y\cap Z}} \\
A(Z)\ar[r]^{\pi^Z_{Y\cap Z}} & A(Y\cap Z).
}
\]
\end{lem}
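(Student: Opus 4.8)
The plan is to prove \cref{lem:cx_algebra_pullback} by exhibiting an explicit isomorphism and verifying it has the required properties. First I would define the candidate map $\Phi : A \to A(Y) \oplus_{A(Y\cap Z)} A(Z)$ by $\Phi(a) = (\pi_Y(a), \pi_Z(a))$. This is well-defined into the pullback because $\pi^Y_{Y\cap Z}\circ \pi_Y = \pi_{Y\cap Z} = \pi^Z_{Y\cap Z}\circ \pi_Z$, so the two components agree in $A(Y\cap Z)$; it is clearly a $\ast$-homomorphism since each $\pi_Y$, $\pi_Z$ is. It then remains to check that $\Phi$ is injective and surjective, and that the resulting square commutes (the latter being immediate from the definition of $\Phi$ and of $\phi$, $\psi$ in \eqref{eqn:pullback_diag}).

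For injectivity: if $\pi_Y(a) = 0$ and $\pi_Z(a) = 0$, then $a \in C_0(X,Y)A \cap C_0(X,Z)A$. The key point is that $C_0(X,Y)\cdot C_0(X,Z) $ is dense in $C_0(X,Y)\cap C_0(X,Z) = C_0(X, Y\cup Z) = C_0(X,X) = 0$ — more carefully, since $X = Y\cup Z$, a function vanishing on both $Y$ and $Z$ vanishes identically, so $C_0(X,Y)\cap C_0(X,Z) = 0$. One then needs the standard fact (e.g.\ from the theory of ideals in $C(X)$-algebras) that for closed ideals of the form $C_0(X,Y)A$ one has $\big(C_0(X,Y)A\big)\cap\big(C_0(X,Z)A\big) = \big(C_0(X,Y)\cap C_0(X,Z)\big)A = 0$; this uses an approximate identity argument: if $a$ lies in both ideals, write $a = \lim e_\lambda a$ with $e_\lambda \in C_0(X,Y)$ and also $a = \lim f_\mu a$ with $f_\mu\in C_0(X,Z)$, and estimate $\|a\| = \lim_\lambda \|e_\lambda a\|$ using that $e_\lambda a$ can be approximated by elements of $C_0(X,Z)A$ multiplied by $e_\lambda\in C_0(X,Y)$, forcing $\|a\|$ arbitrarily small.

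For surjectivity: given $(b,c)$ in the pullback, so $\pi^Y_{Y\cap Z}(b) = \pi^Z_{Y\cap Z}(c)$, I would first lift $b$ to some $a_1\in A$ with $\pi_Y(a_1) = b$ and $c$ to some $a_2\in A$ with $\pi_Z(a_2) = c$ (possible since $\pi_Y$, $\pi_Z$ are surjective quotient maps). Then $\pi_{Y\cap Z}(a_1 - a_2) = 0$, i.e.\ $a_1 - a_2 \in C_0(X,Y\cap Z)A$. Using a partition-of-unity-type argument — pick $h\in C(X)$ with $h\equiv 0$ on a neighborhood shrinking to $Y\cap Z$ from within $Z$ and $h\equiv 1$ near $Y\setminus Z$, or more robustly approximate $a_1 - a_2$ by $\sum g_i d_i$ with $g_i\in C_0(X,Y\cap Z)$ — one corrects $a_1$ to $a := a_1 - g(a_1 - a_2)$ for a suitable $g\in C(X)$ that is $1$ on $Y$ and $0$ on $Z$... but no such continuous $g$ exists unless $Y\cap Z=\emptyset$, so instead one must exploit that $a_1 - a_2\in C_0(X,Y\cap Z)A$ can be split: since $C_0(X, Y\cap Z) = C_0(X,Y\cap Z)$ and the ideal $C_0(X,Y\cap Z)A = C_0(X,Y)A + C_0(X,Z)A$ (this sum identity, valid because $Y\cap Z = (Y)\cup^c\ldots$ — concretely $C_0(X,Y\cap Z) = C_0(X,Y) + C_0(X,Z)$ as ideals in $C(X)$, a consequence of normality of $X$ and a partition of unity subordinate to the cover $X\setminus Y$, $X\setminus Z$ of $X\setminus(Y\cap Z)$), we may write $a_1 - a_2 = j_Y + j_Z$ with $j_Y\in C_0(X,Y)A$ and $j_Z\in C_0(X,Z)A$. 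Then $a := a_1 - j_Y = a_2 + j_Z$ satisfies $\pi_Y(a) = \pi_Y(a_1) = b$ and $\pi_Z(a) = \pi_Z(a_2) = c$, so $\Phi(a) = (b,c)$.

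The main obstacle is the surjectivity step, and within it the ideal decomposition $C_0(X,Y\cap Z) = C_0(X,Y) + C_0(X,Z)$: this is where the hypothesis $X = Y\cup Z$ (together with compactness/normality of $X$) is genuinely used, via a partition of unity argument on $X$. Everything else is essentially formal manipulation of quotient maps and approximate identities, though one should be a little careful that all the ideals involved really are of the stated form and that taking quotients commutes with the intersection/sum operations as claimed. I would cite or reproduce the relevant elementary lemmas on ideals in $C(X)$-algebras (these appear in \cite{Kasparov} or can be assembled from the structure theory) rather than rederiving them in full.
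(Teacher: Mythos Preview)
The paper does not actually prove this lemma; it is stated with a citation to \cite[Lemma 2.4]{mdd_finite} and used as a black box throughout. So there is no proof in the paper to compare against.

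Your argument is the standard one and is essentially correct. A couple of minor comments. For injectivity, the approximate-identity route you sketch works, but a cleaner line is to recall that for any $C(X)$-algebra one has $\|a\| = \sup_{x\in X}\|a(x)\|$; then $\pi_Y(a)=0$ and $\pi_Z(a)=0$ force $a(x)=0$ for every $x\in Y\cup Z = X$, hence $a=0$. For surjectivity, the detour through a function $g$ equal to $1$ on $Y$ and $0$ on $Z$ is (as you noticed) a dead end, and your final argument via the ideal identity $C_0(X,Y\cap Z)=C_0(X,Y)+C_0(X,Z)$ is the right one; the passage from this sum in $C(X)$ to the corresponding sum $C_0(X,Y\cap Z)A = C_0(X,Y)A + C_0(X,Z)A$ of ideals in $A$ follows from Cohen factorization (every element of $C_0(X,Y\cap Z)A$ factors as $fa$ with $f\in C_0(X,Y\cap Z)$, and then split $f$). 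With those small clean-ups, your write-up would be a complete proof of the cited lemma.
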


\subsection{Rational Homotopy Theory}

We now discuss some basic facts about the rationalization of groups and spaces as developed in \cite{hilton}. \\

A connected CW-complex $Y$ is said to be nilpotent if $\pi_1(Y)$ is a nilpotent group and $\pi_1(Y)$ acts nilpotently on $\pi_j(Y)$ for all $j\geq 2$. A nilpotent space $Y$ is a rational space if, for each $j\geq 1$, the homotopy group $\pi_j(Y)$ is a $\Q$-vector space. A continuous map $r:Y\to Z$ is said to be a rationalization of $Y$ if $Z$ is a rational space and $r_{\ast}\otimes \text{id} : \pi_{\ast}(Y)\otimes \Q \to \pi_{\ast}(Z)\otimes \Q\cong \pi_{\ast}(Z)$ is an isomorphism. The next theorem (see \cite[Theorem II.3A]{hilton}) is fundamental to the theory.

\begin{theorem}[Hilton, Mislin, and Roitberg]\label{thm: hilton}
Every nilpotent CW complex $Y$ has a rationalization $r:Y\rightarrow Y_{\Q}$, where $Y_{\Q}$ is a CW complex. The space $Y_{\Q}$ is uniquely determined up to homotopy equivalence.
\end{theorem}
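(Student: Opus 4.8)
The plan is to construct the rationalization stage by stage along a principal Postnikov tower of $Y$, using nilpotence precisely to guarantee that such a tower exists. Since $Y$ is a nilpotent CW complex, its Postnikov system admits a principal refinement: a tower of fibrations $\cdots \to Y[n]\to Y[n-1]\to\cdots\to Y[0]=\ast$ with $Y\simeq \varprojlim Y[n]$, in which each $Y[n]\to Y[n-1]$ is the pullback of the path--loop fibration $PK(A_n,q_n+1)\to K(A_n,q_n+1)$ along a classifying map $k_n\colon Y[n-1]\to K(A_n,q_n+1)$, for suitable abelian groups $A_n$ and integers $q_n\geq 1$. The second ingredient is that Eilenberg--MacLane spaces rationalize trivially: the homomorphism $A\to A\otimes\Q$ induces $K(A,q)\to K(A\otimes\Q,q)$, whose target is rational and which on $\pi_q$ is $A\to A\otimes\Q$, an isomorphism after tensoring with $\Q$.

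For the inductive construction, set $Y[0]_{\Q}=\ast$ and suppose a rationalization $r_{n-1}\colon Y[n-1]\to Y[n-1]_{\Q}$ has been built. One then factors the composite $Y[n-1]\xrightarrow{k_n} K(A_n,q_n+1)\to K(A_n\otimes\Q,q_n+1)$ through $r_{n-1}$, up to homotopy and uniquely, to obtain $\bar k_n\colon Y[n-1]_{\Q}\to K(A_n\otimes\Q,q_n+1)$; define $Y[n]_{\Q}$ as the pullback of the path--loop fibration along $\bar k_n$. The resulting map of fibrations gives $r_n\colon Y[n]\to Y[n]_{\Q}$, and comparing the two long exact homotopy sequences after $\otimes\Q$ shows, via the five lemma, that $r_n$ is again a rationalization. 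Passing to the homotopy inverse limit, modelled by a mapping telescope so as to stay within CW complexes (the $\varprojlim^1$ terms being $\Q$-vector spaces, which behave harmlessly), produces $Y_{\Q}$ together with $r\colon Y\to Y_{\Q}$ such that $r_{\ast}\otimes\Q$ is an isomorphism, with $Y_{\Q}$ nilpotent and all its homotopy groups $\Q$-vector spaces.

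Uniqueness is then formal: if $r\colon Y\to Z$ and $r'\colon Y\to Z'$ are two rationalizations, then because $Z'$ is rational the map $r'$ factors up to homotopy as $g\circ r$; since $g_{\ast}\otimes\Q$ is an isomorphism and $Z,Z'$ are rational, $g_{\ast}$ is an isomorphism, so $g$ is a homotopy equivalence by Whitehead's theorem.

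Everything above rests on one universal property: for every nilpotent rational space $W$, the map $r^{\ast}\colon [Y_{\Q},W]\to[Y,W]$ is a bijection. I would prove this by induction along a principal Postnikov tower of $W$, where the obstruction to lifting a map through a stage, and the set of homotopy classes of such lifts, are controlled by cohomology groups $H^{\ast}(Y;\pi_{\ast}(W))$ and $H^{\ast}(Y_{\Q};\pi_{\ast}(W))$; the map $r^{\ast}$ identifies these because $\pi_{\ast}(W)$ is a $\Q$-vector space and $r$ induces an isomorphism on rational homology. That last statement — that a map of nilpotent spaces which is a rational homotopy equivalence is also a rational homology equivalence — is the substantive input, and I expect it to be the main obstacle: for simply connected spaces it is the mod-$\mathcal{C}$ Hurewicz and Whitehead theorems applied to the homotopy fibre of $r$, and the general nilpotent case is reduced to it by the same principal-tower device. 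Once this homology--homotopy comparison is in hand, the remainder is bookkeeping with fibrations and the five lemma.
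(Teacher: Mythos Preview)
The paper does not prove this theorem; it merely cites \cite[Theorem II.3A]{hilton} and uses the result as a black box. Your outline is essentially the standard construction from that reference---principal refinement of the Postnikov tower, rationalizing each Eilenberg--MacLane stage, five lemma on the long exact sequences, then the universal property for uniqueness---and is correct in substance.

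One terminological slip worth flagging: you say the homotopy inverse limit is ``modelled by a mapping telescope''. Mapping telescopes compute homotopy \emph{colimits} of direct systems, not inverse limits of towers. For a tower of fibrations the homotopy inverse limit coincides with the ordinary inverse limit, and one then takes a CW approximation to land in CW complexes; alternatively one uses the standard ``$\lim$--$\lim^1$'' model built from $\prod_n Y[n]_{\Q}$. Your remark about the $\varprojlim^1$ terms being $\Q$-vector spaces is in the right spirit, but note that what actually matters is that for each fixed $k$ the tower $\{\pi_k(Y[n]_{\Q})\}_n$ is eventually constant (since only finitely many stages touch $\pi_k$), so the $\varprojlim^1$ terms vanish outright. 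None of this affects the overall strategy.
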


We now specialize to the situation of our interest. Recall that an $H$-space is a pointed space $(Y,e)$ endowed with a `multiplication' map $\mu:Y\times Y\to Y$ such that $e$ is a homotopy unit, that is, the maps $\lambda, \rho :Y\to Y$ given by $\lambda(y) := \mu(e,y)$ and $\rho(y) := \mu(y,e)$ are both homotopic to $\text{id}_Y$. We denote this by the triple $(Y,e,\mu)$. We say that $(Y,e,\mu)$ is homotopy-associative if the maps $\mu\circ (\mu\times \text{id}_Y)$ and $\mu\circ (\text{id}_Y\times \mu) : Y\times Y \times Y \to Y$ are homotopic. In what follows, we will implicitly assume that the $H$-spaces under consideration are all homotopy-associative.\\

Now suppose $(Y,e,\mu)$ is an $H$-space, where the space $Y$ is a connected CW-complex. Since $Y$ is nilpotent, it has a rationalization $r:Y\to Y_{\Q}$ by \cref{thm: hilton}. Now, by \cite[Theorem 6.2.3]{may_ponto}, $r\times r : Y\times Y\to Y_{\Q}\times Y_{\Q}$ is a rationalization. By the universal property of the rationalization, there is a unique map $\rho : Y_{\Q}\times Y_{\Q}\to Y_{\Q}$ such that the following diagram commutes upto homotopy.
\begin{equation}\label{diag: rationalization_h_space}
\xymatrix{
Y\times Y\ar[r]^{\mu}\ar[d]_{r\times r} & Y\ar[d]^{r} \\
Y_{\Q}\times Y_{\Q}\ar[r]^{\rho} & Y_{\Q}.
}
\end{equation}
By the mapping cylinder construction, we may assume that $r$ is a cofibration. Then, $r\times r$ is also a cofibration as it is the composition of two cofibrations $Y\times Y\to Y\times Y_{\Q}\to Y_{\Q}\times Y_{\Q}$. Hence, by \cite[Problem 5.3]{strom}, we may assume that the above diagram commutes strictly. If we set $e_{\Q} := r(e)$, then it follows from \cite[Proposition 6.6.2]{may_ponto} that the triple $(Y_{\Q},e_{\Q},\rho)$ is an $H$-space. Furthermore, by universality, we may also ensure that the triple $(Y,e_{\Q}, \rho)$ is homotopy-associative. We summarize this result below.

\begin{prop}\label{prop: h_space_rationalization}
If $(Y,e,\mu)$ is a homotopy-associative $H$-space, where $Y$ is a connected CW-complex, then there is a homotopy-associative $H$-space $(Y_{\Q},e_{\Q},\rho)$ and a map $r:Y\to Y_{\Q}$ such that $r$ is a rationalization, and the diagram \cref{diag: rationalization_h_space} commutes strictly.
\end{prop}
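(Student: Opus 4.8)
The plan is to construct $Y_{\Q}$ and $\rho$ directly from the rationalization machinery of \cref{thm: hilton}, transporting the $H$-space structure across $r$ by the universal property and rigidifying the multiplication square by a cofibration argument.

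First I would check that $Y$ is nilpotent. For any homotopy-associative $H$-space the fundamental group $\pi_1(Y)$ is abelian and acts trivially on each $\pi_j(Y)$, so a connected CW-complex that is an $H$-space is in particular a nilpotent space. Thus \cref{thm: hilton} applies and yields a rationalization $r : Y\to Y_{\Q}$ with $Y_{\Q}$ a CW-complex. Replacing $r$ by the inclusion of $Y$ into its mapping cylinder, I may assume $r$ is a cofibration without disturbing $Y_{\Q}$ up to homotopy equivalence.

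Next I would promote $r$ to a rationalization of the square. By \cite[Theorem 6.2.3]{may_ponto} a finite product of rationalizations is a rationalization, so $r\times r : Y\times Y\to Y_{\Q}\times Y_{\Q}$ is a rationalization; and it is a cofibration, being the composite of the two cofibrations $Y\times Y\to Y\times Y_{\Q}\to Y_{\Q}\times Y_{\Q}$. Since $Y_{\Q}$ is a rational (hence local) space, the universal property of rationalization gives that $(r\times r)^{\ast} : [Y_{\Q}\times Y_{\Q}, Y_{\Q}]\to [Y\times Y, Y_{\Q}]$ is a bijection; pulling back the class of $r\circ\mu$ produces a map $\rho : Y_{\Q}\times Y_{\Q}\to Y_{\Q}$ with $\rho\circ(r\times r)\simeq r\circ\mu$. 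Because $r\times r$ is a cofibration, \cite[Problem 5.3]{strom} lets me adjust $\rho$ within its homotopy class to one for which \cref{diag: rationalization_h_space} commutes strictly.

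Finally I would verify the axioms for $(Y_{\Q}, e_{\Q}, \rho)$ with $e_{\Q} := r(e)$. Precomposing $\rho(e_{\Q}, \cdot)$ with $r$ and using the strict square gives $r\circ\mu(e,\cdot) = r\circ\lambda\simeq r$, so $\rho(e_{\Q},\cdot)$ and $\text{id}_{Y_{\Q}}$ become homotopic after precomposition with the rationalization $r$; by the injectivity half of the universal property they are homotopic, and likewise for $\rho(\cdot, e_{\Q})$ — this is precisely \cite[Proposition 6.6.2]{may_ponto}. For homotopy-associativity, $r\times r\times r : Y\times Y\times Y\to Y_{\Q}\times Y_{\Q}\times Y_{\Q}$ is again a rationalization, and the two triple products $\rho\circ(\rho\times\text{id}_{Y_{\Q}})$ and $\rho\circ(\text{id}_{Y_{\Q}}\times\rho)$ agree up to homotopy after precomposition with it (since $\mu$ is homotopy-associative and the square is strict), so the same universality argument supplies the required homotopy. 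The only genuinely non-formal point is this rigidification step: one must know $r\times r$ is a cofibration in order to invoke the homotopy-extension property and pass from a square commuting up to homotopy to one commuting on the nose. Everything else is a routine application of the universal property of rationalization to maps into the local space $Y_{\Q}$.
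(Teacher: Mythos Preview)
Your proposal is correct and follows essentially the same argument as the paper: you invoke \cref{thm: hilton} after observing $Y$ is nilpotent, use \cite[Theorem 6.2.3]{may_ponto} to see $r\times r$ is a rationalization, apply the universal property to produce $\rho$, rigidify via the mapping-cylinder/cofibration trick and \cite[Problem 5.3]{strom}, and then deduce the $H$-space structure and homotopy-associativity from \cite[Proposition 6.6.2]{may_ponto} and universality. The only difference is that you spell out the nilpotence and associativity verifications in slightly more detail than the paper does.
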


If $A$ is a C*-algebra, then $\U(A)$ has the homotopy type of a CW-complex \cite[Corollary 1.6]{thomsen}. Therefore, $\U_0(A)$ may be regarded as a connected CW-complex. Since $\U_0(A)$ is a topological group (and hence a connected $H$-space), it has as rationalization $r:\U_0(A)\to \U_0(A)_{\Q}$. By \cref{prop: h_space_rationalization}, $\U_0(A)_{\Q}$ has the structure of an $H$-space, which we write as $(\U_0(A)_{\Q},e_{\Q},\rho)$, where $e_{\Q} = r(0)$. Finally, observe that the commutativity of \cref{diag: rationalization_h_space} implies that $\rho(e_{\Q},e_{\Q}) = e_{\Q}$.

\subsection{Notational Conventions}
If $A$ and $B$ are two $C^*$-algebras, the symbol $A\otimes B$ will always denote the minimal tensor product. If $B = C_0(X)$ is commutative, we identify $C_0(X)\otimes A$ with $C_0(X,A)$, the space of continuous $A$-valued functions on $X$ that vanish at infinity. \\

Suppose $f$ and $g$ are two continuous paths in a topological space $Y$. If $f(1) = g(0)$, we write $f\bullet g$ for the concatenation of the two paths. If $f$ and $g$ agree at end-points, we write $f\sim_h g$ if there is a path homotopy between them. Furthermore, we write $\overline{f}$ for the path $\overline{f}(t) := f(1-t)$ and the constant path at a point $\ast$ as $e_{\ast}$.\\

If $X$ and $Y$ are two pointed spaces, we write $C_{\ast}(X,Y)$ for the space of base point preserving continuous functions from $X$ to $Y$. Note that if $A$ is a C*-algebra, and $Y$ is either $A$ or $\U_0(A)$, then we always take $0$ to be the base point. In that case, $C_{\ast}(X,A)$ is a C*-algebra, and, for any path-connected space $X$, there is a natural isomorphism
\[
\U(C_{\ast}(X,A)) \cong C_{\ast}(X,\U_0(A)).
\]
Henceforth, we will identify these two spaces without further comment. \\

If $(Y,e,\mu)$ is an $H$-space and $a\in Y$, we may define non-negative powers of $a$ inductively by $\mu_0(a) := e$ and $\mu_n(a) := \mu(\mu_{n-1}(a),a)$. Similarly, if $f:X\to Y$ is any function, we define non-negative powers of $f$ point-wise, that is, $\mu_n(f)(x) := \mu_n(f(x))$ for all $n\geq 0$. Note that, if $f\in C_{\ast}(S^j,Y)$, then $[\mu_n(f)] = n[f]$ in $\pi_j(Y)$ by \cite[Theorem 4.7]{whitehead}. Throughout the rest of the paper, for any $C^*$-algebra $B$, we write $\mu^B$ for the multiplication in $\U_0(B)$ given by \cref{grp_oper}, and $\rho^B$ for the multiplication in $\U_0(B)_{\Q}$ given by \cref{prop: h_space_rationalization}.

\section{Main Results}\label{sec: main}

The goal of this section is to provide a proof for \cref{main_thm}. To put things in perspective, we begin by constructing an example of a C*-algebra that is rationally $K$-stable, but not $K$-stable.

\begin{ex}\label{ex: rat_k_stable_not_k_stable}
Let $X$ be a connected, finite CW-complex such that $H^i(X;\Z)$ is a finite group for all $i\geq 1$ (for instance, we may take $X$ to be the real projective space $\R\mathbb{P}^2$), and set
\[
A := C_{\ast}(X,\C).
\]
Note that, for all $n,m\geq 1$,
\[
F_n(M_m(A))=\pi_n(C_{\ast}(X;\mathcal{U}_m))\otimes\Q = \bigoplus_{l\geq n} \tilde{H}^{l-n}(X;\pi_l(\mathcal{U}_m)\otimes\Q)=0
\]
by \cite[Theorem 4.20]{lupton}. Hence, $A$ is rationally $K$-stable. \\

Now suppose that $A$ is $K$-stable. We fix a path connected $H$-space $Y$, and consider the following fibration sequence (see the proof of \cite[Proposition 4.9]{lupton})
\[
C_{\ast}(X,Y)\to C(X,Y)\to Y.
\]
This fibration has a section, hence the long exact homotopy sequence breaks into split short exact sequences 
\begin{equation}\label{eqn: ses_homotopy_group_h_space}
0 \to \pi_n(C_{\ast}(X,Y))\to \pi_n(C(X,Y))\to \pi_n(Y)\to 0
\end{equation}
for all $n\in \N$. By a result of Thom \cite[Theorem 2]{thom}, if $Y = S^1 = K(\Z,1)$, then $\pi_n(C(X,S^1)) \cong H^{1-n}(X;\Z)$. It follows that
\[
G_n(A) = \pi_n(\U(C_{\ast}(X,\C))) \cong \pi_n(C_{\ast}(X,S^1)) = 0
\]
for all $n\geq 1$. If $A$ were $K$-stable, it would follow that
\[
\pi_n(C_{\ast}(X,\mathcal{U}_m)) \cong G_n(M_m(A)) \cong G_n(A) = 0
\]
for all $n,m\geq 1$. Hence, we conclude that $\pi_n(C_{\ast}(X,\U(\mathcal{K}))) \cong G_n(A\otimes \mathcal{K}) =0$ for all $n\geq 1$. Taking $Y = \U(K)$ in \cref{eqn: ses_homotopy_group_h_space}, we conclude that
\begin{equation}\label{eqn: n_group}
\pi_n(C(X,\U(\mathcal{K})))\cong \pi_n(\U(\mathcal{K}))=\begin{cases}
\Z &:  n \text{ odd} \\
0 &: n \text{ even}
\end{cases}.
\end{equation}
Thus, in order to show that $A$ is not $K$-stable, it suffices to show that \cref{eqn: n_group} cannot hold. To do this, we consider the work of Federer \cite{federer}, who constructed a spectral sequence converging to these homotopy groups (note that $X$ is a finite CW-complex, and $\U(K)$ is a simple space, so the results of \cite{federer} do apply). The first page of this spectral sequence, which converges to $\pi_p(C(X,\U(K)))$, is of the form
\[
C^{(1)}_{p,q}\cong H^q(X;\pi_{p+q}(\U(K)))
\]
with differential $d:C^{(1)}_{p,q}\to C^{(1)}_{p-1,q+2}$. Therefore, for $C^{(1)}_{p,q}$ to be non-zero, $p+q$ must be odd. But in that case, $C^{(1)}_{p-1,q+2}$ is zero. Hence, the spectral sequence collapses at the very first page, so $C^{(1)}_{p,q} = C^{(\infty)}_{p,q}$. Therefore, we conclude that
\[
\pi_n(C(X,\U(K))) = \bigoplus_{q\geq 0} H^q(X;\pi_{n+q}(\U(K)))
\]
for all $n\geq 1$. This is a finite sum of finite groups (by our choice of $X$), contradicting \cref{eqn: n_group}. Thus, $A$ is not $K$-stable.
\end{ex}

We now turn to the proof of \cref{main_thm}, and begin with some lemmas that will be useful to us. The first lemma, which we will use repeatedly throughout the paper, follows from \cite[Theorem 1.9]{thomsen} and \cite[Theorem 4.8]{dold}.

\begin{lem}\label{thm:fibration}
Let $\varphi : A\to B$ be a surjective $\ast$-homomorphism between two C*-algebras, then the induced maps $\varphi : \U(A)\to \varphi(\U(A))$ and $\varphi : \U_0(A)\to \U_0(B)$ are both Serre fibrations.
\end{lem}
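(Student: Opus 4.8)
The plan is to realise $\varphi$ as a map that is \emph{locally trivial} over a numerable open cover of its image, and then to invoke \cite[Theorem 4.8]{dold}, which upgrades such a map to a Hurewicz fibration, in particular a Serre fibration. The local triviality is manufactured from the local cross-section furnished by \cite[Theorem 1.9]{thomsen} together with the group structure carried by $\U(A)$ and $\U(B)$ under the operation $\cdot$ of \cref{grp_oper}. Before starting I would record two elementary facts: $\varphi$ is a group homomorphism for $\cdot$ (since $\varphi(a+b-ab)=\varphi(a)+\varphi(b)-\varphi(a)\varphi(b)$) sending $u^{\ast}$ to $\varphi(u)^{\ast}$, so $\varphi(\U(A))$ is a subgroup of $\U(B)$ with fibre $\U(\ker\varphi)$ over $0$; and a quasi-unitary of norm $<2$ lies in $\U_0$ (write $1-u=\exp(ih)$ with $h$ in the algebra), so a small norm-ball about $0$ is a neighbourhood of $0$ contained in $\U_0(A)$ (and likewise in $\U_0(B)$).

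The first map is handled as follows. Using \cite[Theorem 1.9]{thomsen} I would fix an open neighbourhood $V$ of $0$ in $\U(B)$ with $V\subseteq\varphi(\U(A))$ and a continuous section $s:V\to\U(A)$ of $\varphi$ with $s(0)=0$, and shrink $V$ so that $s(V)\subseteq\U_0(A)$. Then $\varphi(\U(A))$ is an open, hence clopen, subgroup of $\U(B)$, so it contains $\U_0(B)$. For each $b_0\in\varphi(\U(A))$, choosing a lift $\tilde b_0\in\U(A)$ of $b_0$ and setting $s_{b_0}(b):=\tilde b_0\cdot s(b_0^{\ast}\cdot b)$ defines a continuous section of $\varphi$ on the open set $b_0\cdot V$; the standard formulas $(b,k)\mapsto s_{b_0}(b)\cdot k$ and $a\mapsto(\varphi(a),\,s_{b_0}(\varphi(a))^{\ast}\cdot a)$ are then mutually inverse homeomorphisms between $(b_0\cdot V)\times\U(\ker\varphi)$ and $\varphi^{-1}(b_0\cdot V)$. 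Thus $\varphi:\U(A)\to\varphi(\U(A))$ is locally trivial; since $\U(B)$ is metric, its image is paracompact, the cover $\{b_0\cdot V\}$ is numerable, and \cite[Theorem 4.8]{dold} shows $\varphi:\U(A)\to\varphi(\U(A))$ is a fibration.

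For the second map I would first note that it is onto: given $v\in\U_0(B)\subseteq\varphi(\U(A))$, a path from $0$ to $v$ lifts, through the fibration just established, to a path in $\U(A)$ starting at $0$, whose endpoint lies in $\U_0(A)$ and maps to $v$. Consequently, when $b_0\in\U_0(B)$ the lift $\tilde b_0$ above may be chosen in $\U_0(A)$, and then $s_{b_0}$ takes values in $\U_0(A)$ (being a $\cdot$-product of elements of $\U_0(A)$), so the same formulas restrict to mutually inverse homeomorphisms between $(b_0\cdot V)\times(\U(\ker\varphi)\cap\U_0(A))$ and $\varphi^{-1}(b_0\cdot V)\cap\U_0(A)$. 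Hence $\varphi:\U_0(A)\to\U_0(B)$ is locally trivial over the numerable cover $\{b_0\cdot V:b_0\in\U_0(B)\}$ of the metric (so paracompact) space $\U_0(B)$, and a second application of \cite[Theorem 4.8]{dold} completes the argument. The main obstacle is not a single hard step but the bookkeeping: checking that \cite[Theorem 1.9]{thomsen} supplies a section over a neighbourhood of $0$ in $\U(B)$ rather than only in the image, that the translated sections and trivializations are compatible with the relevant subspace topologies, and that the small norm-ball around $0$ really lies in $\U_0$ so that the $\U_0$-case goes through.
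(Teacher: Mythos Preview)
Your proposal is correct and follows precisely the route the paper indicates: the paper's proof consists solely of the citation ``follows from \cite[Theorem 1.9]{thomsen} and \cite[Theorem 4.8]{dold}'', and you have supplied exactly the standard details behind that citation, namely translating Thomsen's local section via the group structure to obtain local triviality and then invoking Dold's theorem over a numerable cover of the (metric, hence paracompact) target. There is no meaningful difference in approach to discuss.
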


\begin{lem}\cite[Lemma 2.2]{vaidyanathan}\label{lem:quasi_homotopy}
Let $a,b\in \U(A)$ such that $\|a-b\| < 2$, then $a\sim b$ in $\U(A)$.
\end{lem}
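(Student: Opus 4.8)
The plan is to pass to the forced unitization and reduce to the classical ``distance less than $2$'' statement for honest unitaries via the continuous functional calculus. Write $\tilde A$ for the unitization of $A$ (adjoining a new unit even when $A$ is already unital) and $\pi : \tilde A \to \C$ for the canonical character. First I would record the elementary observation that $u \mapsto 1 - u$ is a homeomorphism from $\U(A)$ onto $\mathcal{V} := \{w \in \tilde A : w^\ast w = w w^\ast = 1,\ \pi(w) = 1\}$, with inverse $w \mapsto 1 - w$; this is immediate from the definition of the composition in \cref{grp_oper} together with the definition of quasi-unitary. Consequently, $a \sim b$ in $\U(A)$ as soon as $1 - a$ and $1 - b$ can be joined by a norm-continuous path inside $\mathcal{V}$.

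Next, set $w := 1 - a$ and $z := 1 - b$, so that $w, z \in \mathcal{V}$ and, $w$ being unitary, $\|1 - w^\ast z\| = \|w^\ast(w - z)\| = \|w - z\| = \|a - b\| < 2$. The element $u := w^\ast z$ is a unitary of $\tilde A$ with $\pi(u) = 1$, hence $u \equiv 1 \pmod A$; being normal, $\sigma(u) \subseteq \T$, and $\|1 - u\| < 2$ forces $-1 \notin \sigma(u)$. Thus the principal logarithm is continuous on a neighbourhood of $\sigma(u)$, and $h := -i\,\mathrm{Log}(u)$ is self-adjoint with $e^{ih} = u$. Since we use the principal branch and $\pi(u) = 1$, applying $\pi$ yields $\pi(h) = 0$, so $h \in A$.

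Finally, I would take $f(t) := w\,e^{ith}$ for $t \in [0,1]$: each $f(t)$ is unitary, and as $w \equiv 1$ and $e^{ith} \equiv 1 \pmod A$ (because $h \in A$), we get $f(t) \in \mathcal{V}$; the path is norm-continuous with $f(0) = w$ and $f(1) = w e^{ih} = w w^\ast z = z$. Transporting $f$ through the homeomorphism of the first paragraph, $g(t) := 1 - w\,e^{ith}$ is a continuous path in $\U(A)$ from $a$ to $b$, and we are done.

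The one point requiring care — and the place where the hypothesis $\|a - b\| < 2$ is genuinely used — is keeping the connecting path inside $\mathcal{V}$ rather than in the whole unitary group of $\tilde A$. This is exactly secured by the two facts extracted above: $-1 \notin \sigma(w^\ast z)$, which guarantees that a logarithm even exists, and $\pi(h) = 0$, which guarantees that the resulting path is trivial modulo $A$. The bound $2$ is sharp, since if $-1$ were permitted in the spectrum no continuous logarithm need be available.
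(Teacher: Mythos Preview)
Your argument is correct. The paper does not supply a proof of this lemma in-text: it is simply quoted from \cite[Lemma 2.2]{vaidyanathan}, so there is no in-paper argument to compare against. Your route---passing to the forced unitization, identifying $\U(A)$ with the unitaries of $\tilde A$ lying over $1$ via $u\mapsto 1-u$, and then using the principal logarithm of $w^{\ast}z$ to build a path that stays over $1$---is exactly the standard proof (and is the one underlying Thomsen's framework); the two points you flag, namely $-1\notin\sigma(w^{\ast}z)$ and $\pi(h)=0$, are precisely what make the path land in $\mathcal{V}$ rather than merely in the full unitary group of $\tilde A$.
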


Note that, for any element $a$ in a C*-algebra $A$ (not necessarily a quasi-unitary), we write $\mu^A_N(a)$ for $a\cdot a\cdot \ldots a$ ($N$ times). The next lemma is a variation of \cite[Lemma 2.3]{vaidyanathan} that we need for our purposes.

\begin{lem}\label{lem:quasi_unitary_close}
For any $\epsilon > 0$ and any $N\in \N$, there exists $\delta > 0$ satisfying the following condition: For any C*-algebra $A$, and any element $a \in A$ such that $\|a\| \leq 2, \|a\cdot a^{\ast}\| < \delta$, and $\|a^{\ast}\cdot a\| < \delta$, there exists a quasi-unitary $u\in \U(A)$ such that
\[
\|\mu^A_N(u) - \mu^A_N(a)\| < \epsilon.
\]
\end{lem}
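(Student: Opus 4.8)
The plan is to mimic the proof of \cite[Lemma 2.3]{vaidyanathan}, but to keep careful track of how the approximation constant $\delta$ depends only on $\epsilon$ and $N$, and not on the ambient $C^*$-algebra $A$. The starting point is the observation that the hypotheses $\|a\cdot a^*\| < \delta$ and $\|a^*\cdot a\| < \delta$ say that $a$ is ``approximately a quasi-unitary''. Recall that under the bijection $a \mapsto 1 + a$ (passing to the unitization $\widetilde{A}$), the quasi-unitary condition $a \cdot a^* = a^* \cdot a = 0$ becomes the genuine unitary condition $(1+a)(1+a)^* = (1+a)^*(1+a) = 1$. So I would set $x := 1 + a \in \widetilde{A}$; then $\|x\| \leq 3$, and $\|xx^* - 1\| = \|a \cdot a^*\| < \delta$, $\|x^*x - 1\| = \|a^*a\| < \delta$ (using the identity $1 - (1+a)(1+a)^* = -(a + a^* - aa^*) = -(a\cdot a^*)$). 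Thus $x$ is within $\delta$ of being a unitary, and by standard functional calculus (polar decomposition, or applying the retraction $y \mapsto y(y^*y)^{-1/2}$, which is defined and norm-continuous on a neighbourhood of the unitaries with a modulus of continuity depending only on the relevant norm bounds) there is a genuine unitary $w \in \widetilde{A}$ with $\|x - w\| < \eta(\delta)$, where $\eta(\delta) \to 0$ as $\delta \to 0$, uniformly in $A$.

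Next I need $w$ to actually come from a quasi-unitary in $A$, i.e.\ I need $w = 1 + u$ for some $u \in A$ — equivalently $w \equiv 1$ modulo $A$ under the quotient $\widetilde{A} \to \C$. Since $x = 1 + a$ with $a \in A$, the image of $x$ in $\C$ is $1$, and the image of $w$ is within $\eta(\delta)$ of $1$ in the unit circle; for $\delta$ small enough this scalar is a number $z$ on the unit circle close to $1$, and replacing $w$ by $\bar z w$ (which changes $w$ by at most $\eta(\delta)$ again and keeps it unitary) I may assume $w \equiv 1 \pmod A$. Then $u := w - 1 \in A$ is a genuine quasi-unitary, lies in $\U(A)$, and satisfies $\|u - a\| = \|w - x\| < \eta'(\delta)$ with $\eta'(\delta) \to 0$ uniformly in $A$.

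Finally I would propagate this bound through $N$-fold multiplication. The operation $\cdot$ is jointly continuous, and on the ball $\{\|a\| \leq C\}$ it is in fact Lipschitz with constant depending only on $C$: from $a \cdot b - a' \cdot b' = (a - a') + (b - b') - (a-a')b - a'(b - b')$ one gets $\|a\cdot b - a'\cdot b'\| \leq (1 + \|b\|)\|a - a'\| + (1 + \|a'\|)\|b - b'\|$. Since $\|a\| \leq 2$ and $\|u\| \leq \|a\| + \eta'(\delta) \leq 3$ for $\delta$ small, an easy induction on $k$ (note $\|\mu^A_k(a)\|$ and $\|\mu^A_k(u)\|$ stay bounded by a constant $C_N$ depending only on $N$, using $\|x\cdot y\| \leq \|x\| + \|y\| + \|x\|\|y\|$) gives $\|\mu^A_N(u) - \mu^A_N(a)\| \leq L_N \cdot \|u - a\| < L_N \, \eta'(\delta)$ for a constant $L_N$ depending only on $N$. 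Now choose $\delta = \delta(\epsilon, N)$ small enough that $L_N\,\eta'(\delta) < \epsilon$ and that all the smallness requirements above are met; this $\delta$ is manifestly independent of $A$, which is exactly the content of the lemma.

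The main obstacle is the uniformity claim: one must verify that the functional-calculus step ``approximate unitary $\Rightarrow$ nearby genuine unitary'' has a modulus of continuity $\eta(\delta)$ that is genuinely independent of the $C^*$-algebra. This is true — it follows from a universal-$C^*$-algebra / continuous-functional-calculus argument, since the estimate only involves the spectrum of $x^*x$, which lies in $[1-\delta, 1+\delta] \cap [0, \|x\|^2] \subseteq [1-\delta, 9]$ for every $A$ — but it is the point that requires care, and it is where the lemma genuinely differs from \cite[Lemma 2.3]{vaidyanathan}. Everything after that is the routine Lipschitz bookkeeping sketched above.
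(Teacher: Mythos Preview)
Your strategy is sound, but there are two points to flag.

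First, a sign error: the bijection between quasi-unitaries in $A$ and unitaries in $\widetilde{A}$ is $u \mapsto 1 - u$, not $u \mapsto 1 + u$. Indeed $(1-a)(1-a)^* = 1 - (a + a^* - aa^*) = 1 - a\cdot a^*$, whereas $(1+a)(1+a)^* = 1 + (a + a^* + aa^*)$, so the identity you wrote is false. Consequently your element $u = w - 1$ is \emph{not} a quasi-unitary: with $ww^* = 1$ one computes $u\cdot u^* = 2(w + w^*) - 4$, which vanishes only when $w = 1$. Replacing $1+a$ by $1-a$ throughout and taking $u := 1 - w$ repairs this; your polar-decomposition estimate and scalar correction then go through unchanged (in fact the scalar correction is unnecessary, since the quotient $\widetilde{A}\to\C$ is a $\ast$-homomorphism and already sends $x(x^*x)^{-1/2}$ to $1$).

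Second, you have misidentified where the content of the lemma lies. \cite[Lemma 2.3]{vaidyanathan} already provides the uniform-in-$A$ statement: for every $\eta > 0$ there is $\delta > 0$, independent of $A$, such that any $a$ with $\|a\| \leq 2$ and $\|a\cdot a^*\|, \|a^*\cdot a\| < \delta$ admits $u \in \U(A)$ with $\|u - a\| < \eta$. So your functional-calculus paragraph is re-proving that lemma, not extending it. The paper's proof is accordingly two sentences: observe that $d \mapsto \mu^A_N(d)$ is a fixed polynomial in $d$, independent of $A$, hence uniformly continuous on $\{\|d\|\leq 2\}$ with a modulus that does not depend on $A$; then invoke \cite[Lemma 2.3]{vaidyanathan} with the target accuracy $\eta$ coming from that modulus. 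Your Lipschitz bookkeeping in the final step is exactly this polynomial-continuity argument, so once the sign is fixed your proof is correct but takes the long way around.
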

\begin{proof}
Note that the function $d\mapsto \mu^A_N(d)$ is a polynomial in $d$ (that is independent of $A$). Thus, for any $\epsilon > 0$, there exists $\eta > 0$ satisfying the following condition: For any C*-algebra $A$ and any $c,d\in A$ with $\|c\|, \|d\|\leq 2$ such that $\|c-d\| < \eta$, we have $\|\mu^A_N(c) - \mu^A_N(d)\| < \epsilon$. \\

We choose $\delta > 0$ satisfying the conditions of \cite[Lemma 2.3]{vaidyanathan} with $\epsilon = \eta$, then there exists $u \in \U(A)$ such that $\|u-a\| < \eta$, so that $\|\mu^A_N(u) - \mu^A_N(a)\| < \epsilon$.
\end{proof}

Our proof of \cref{main_thm} is by induction on the covering dimension of the underlying space. The next theorem is the base case, and it holds even if the space is not metrizable. In what follows, we will repeatedly use the fact that, for any abelian group $A$, any element in $A\otimes\Q$ can be represented as an elementary tensor of the form $u\otimes 1/m$ for some $u\in A$ and $m\in \Z$. 

\begin{theorem}\label{thm:dim_zero_case_}
Let $X$ be a compact Hausdorff space of zero covering dimension, and let $A$ be a continuous $C(X)$-algebra. If each fiber of $A$ is rationally $K$-stable, then so is $A$.
\end{theorem}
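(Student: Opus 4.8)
The plan is to unwind the definition of rational $K$-stability directly. Fix integers $m\geq 1$ and $j\geq 2$, set $B:=M_{j-1}(A)$ and $C:=M_j(A)$, and prove that $F_m(\iota_j):F_m(B)\to F_m(C)$ is an isomorphism, where $\iota_j:B\to C$ is the canonical inclusion from \cref{defn:k_stable}. By \cref{thm:kw_tensor}, $B$ and $C$ are again continuous $C(X)$-algebras; moreover $\iota_j$ is $C(X)$-linear, so for every closed $Y\subseteq X$ the induced map $B(Y)\to C(Y)$ is canonically identified with the matrix inclusion $M_{j-1}(A(Y))\to M_j(A(Y))$. In particular, at $Y=\{x\}$ this fibre map becomes an isomorphism after applying $F_m$, since $A(x)$ is rationally $K$-stable.

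I would then isolate two structural facts. \emph{(Localization at a point.)} Because $X$ has covering dimension zero, every $x\in X$ has a neighbourhood basis of clopen sets, and for $f\in C(X)$ with $f(x)=0$ one can subtract off $f\cdot\chi_{X\setminus U}$ for small clopen $U\ni x$ to conclude $C_0(X,\{x\})=\overline{\bigcup_U C_0(X,U)}$. Hence $A(x)=\varinjlim_U A(U)$ as a $C^*$-inductive limit, with connecting maps the restrictions $\pi^U_{U'}$, and likewise for $B$ and $C$. Since $F_m$ is a continuous homology theory (\cref{prop: continuous_homology}), $F_m(B(x))=\varinjlim_U F_m(B(U))$ and $F_m(C(x))=\varinjlim_U F_m(C(U))$. \emph{(Decomposition over a clopen partition.)} If $X=V_1\sqcup\cdots\sqcup V_l$ with each $V_i$ clopen, then each $A(V_i)$ is a direct summand of $A$, and \cref{lem:cx_algebra_pullback} applied inductively — using that $A(\emptyset)=0$ — gives $A\cong\bigoplus_i A(V_i)$ with $i$-th coordinate projection equal to $\pi_{V_i}$; the analogous splittings of $B$ and $C$ are compatible with $\iota_j$. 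Feeding the associated split short exact sequences into $F_m$ yields $F_m(B)\cong\bigoplus_i F_m(B(V_i))$ with coordinate projections $F_m(\pi_{V_i})$, and $F_m(C)\cong\bigoplus_i F_m(C(V_i))$, with $F_m(\iota_j)$ acting diagonally.

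With these in hand, injectivity is quick: if $\alpha\in F_m(B)$ and $F_m(\iota_j)(\alpha)=0$, then naturality forces $F_m(\pi_x)(\alpha)=0$ in $F_m(B(x))$ for every $x$ (the fibre map being $F_m$-injective), whence $F_m(\pi_U)(\alpha)=0$ for some clopen $U\ni x$ by continuity of $F_m$; refining the resulting clopen cover of $X$ to a finite clopen partition $\{V_i\}$ shows every coordinate of $\alpha$ vanishes, so $\alpha=0$. For surjectivity, given $\beta\in F_m(C)$, for each $x$ choose $\gamma_x\in F_m(B(x))$ with $F_m(\iota_j)(\gamma_x)=F_m(\pi_x)(\beta)$, represent $\gamma_x$ by some $\widetilde\gamma\in F_m(B(U_0))$, and observe — using naturality of $\iota_j$ and $\pi^{U_0}_x\circ\pi_{U_0}=\pi_x$ — that $F_m(\iota_j)(\widetilde\gamma)$ and $F_m(\pi_{U_0})(\beta)$ have the same image in $\varinjlim_U F_m(C(U))=F_m(C(x))$; therefore they already agree after restricting to some clopen $U_x\ni x$, producing $\gamma^{(x)}\in F_m(B(U_x))$ with $F_m(\iota_j)(\gamma^{(x)})=F_m(\pi_{U_x})(\beta)$. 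Refining $\{U_x\}$ to a finite clopen partition $\{V_i\}$, restricting the $\gamma^{(x)}$ accordingly, and using that $F_m(\iota_j)$ is diagonal in the direct-sum decomposition then assembles an element mapping to $\beta$.

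The crux — and the only place where zero-dimensionality is genuinely used — is the step that turns the fibrewise isomorphism at $x$ into a statement over an honest clopen neighbourhood: a priori $A(x)$ records only the germ of $A$ at $x$, and it is the combination of the clopen neighbourhood basis with continuity of $F_m$ that lets one spread the isomorphism (and the witnessing preimages) to actual clopen pieces, after which compactness of $X$ glues finitely many of them via \cref{lem:cx_algebra_pullback}. The remaining work is routine diagram-chasing with naturality and the direct-sum splittings.
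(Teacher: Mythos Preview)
Your argument is correct, and it takes a genuinely different route from the paper's. The paper works throughout with explicit representatives in $\pi_j(\U_0(A))$: for injectivity it pushes a class down to a fibre, uses rational $K$-stability there to get a null-homotopy of a power, then lifts this null-homotopy to a closed neighbourhood via a continuity-of-norm argument (the analogue of \cite[Lemma 2.4]{vaidyanathan}); for surjectivity it lifts a fibre-level preimage to a not-quite-quasi-unitary, corrects it using \cref{lem:quasi_unitary_close}, and applies \cref{lem:quasi_homotopy} to land in the right homotopy class. Clopen partitions and LCM's of the local exponents then glue the pieces. By contrast, you never touch representatives: you encode the ``spread from fibre to neighbourhood'' step once and for all as the identity $A(x)=\varinjlim_{U\ni x}A(U)$ over clopen $U$ (valid precisely because $\dim X=0$), invoke continuity of $F_m$ from \cref{prop: continuous_homology}, and then do a clean directed-limit/direct-sum diagram chase.

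What each approach buys: your argument is shorter, uses nothing specific to $F_m$ beyond its being a continuous homology theory, and would prove the same statement for any such functor (in particular it gives the $K$-stable base case of \cite{vaidyanathan} for free). The paper's argument, on the other hand, is a deliberate warm-up: the explicit lifting of homotopies and the $\epsilon$--$\delta$ control via \cref{lem:quasi_unitary_close} and \cref{lem:quasi_homotopy} are exactly the tools needed in the inductive step for $\dim X\geq 1$, where closed pieces overlap nontrivially and one must match homotopies on the intersection (cf.\ \cref{lem:local_path} and \cref{prop:homotopy}). Your inductive-limit trick does not extend to that setting, since for positive-dimensional $X$ the closed neighbourhoods of $x$ are not clopen and do not yield a direct-sum decomposition of $A$.
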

\begin{proof}
We show that the map
\[
\iota_{\ast}\otimes \text{id} : \pi_j(\U_0(A))\otimes\Q \to \pi_j(\U_0(M_n(A)))\otimes\Q
\]
is an isomorphism for each $n\geq 2$ and $j\geq 1$. For simplicity of notation, we fix $n=2$.\\

We first consider injectivity. Suppose $[f]\otimes q\in \pi_j(\U_0(A))\otimes\Q$ such that $[\iota\circ f]\otimes q =0$ in $\pi_j(\U_0(M_2(A))\otimes\Q$. Then by elementary group theory $[\iota\circ f]$ has finite order in $\pi_j(\U_0(M_2(A)))$. Thus for $x\in X$, $[\iota_x\circ\pi_x\circ f]$ has finite order in $\pi_j(\U_0(M_2(A(x)))$.  Since $A(x)$ is rationally $K$-stable, $[\pi_x\circ f]$ has finite order in $\pi_j(\U_0(A(x)))$. Hence, there exists $N_x\in\N$ and a path $F:[0,1]\to C_*(S^j,\U_0(A(x)))$ such that
\[
F(0)=0, \text{ and } F(1)=\mu^{A(x)}_{N_x}(\pi_x\circ f).
\]
By \cite[Lemma 2.4]{vaidyanathan}, there is a closed neighbourhood $Y_x$ of $x$ such that $\mu^{A(Y_x)}_{N_x}(\pi_{Y_x}\circ f) \sim 0$ in $C_{\ast}(S^j,\U_0(A(Y_x)))$. Since $X$ is zero dimensional, we may assume that the sets $\{Y_x : x\in X\}$ are cl-open and disjoint. Since $X$ is compact, we may obtain a finite sub-cover $\{Y_{x_1}, Y_{x_2}, \ldots, Y_{x_n}\}$. By \cref{lem:cx_algebra_pullback}, 
\[
A \cong A(Y_{x_1})\oplus A(Y_{x_2})\oplus \ldots \oplus A(Y_{x_n})
\]
via the map $b\mapsto (\pi_{Y_{x_1}}(b), \pi_{Y_{x_2}}(b), \ldots, \pi_{Y_{x_n}}(b))$. If $N:=\text{lcm}_{1\leq i\leq n}(N_{x_i})$, then $\mu^{A(Y_{x_i})}_N(\pi_{Y_{x_i}}\circ f)\sim 0$ in $C_*(S^j,\U_0(A(Y_{x_i})))$, for each $1\leq i\leq n$. Thus, $\mu_N(f) \sim 0$ in $\pi_j(\U_0(A))$. Hence, $[f]$ has finite order in $\pi_j(\U_0(A))$, so $[f]\otimes q=0$ in $\pi_j(\U_0(A))\otimes\Q$. Thus, $\iota_{\ast}\otimes\text{id}$ is injective. \\ 

For surjectivity, choose $[u]\in \pi_j(\U_0(M_2(A)))$ and $m\in \Z$ non-zero. We wish to construct an element $[\omega] \in \pi_j(\U_0(A))$ and $q\in \Q$ such that
\[
\iota_{\ast}\otimes\text{id}([\omega]\otimes q)=[u]\otimes\frac{1}{m}.
\]
To this end, fix $x\in X$. Since $A(x)$ is rationally $K$-stable, there exists $[f_x] \in \pi_j(\U_0(A(x)))$ and $q_x \in \Q$ such that
\[
(\iota_x)_{\ast}\otimes \text{id}([f_x]\otimes q_x) = [\eta_x\circ u]\otimes\frac{1}{m}.
\]
Replacing $f_x$ by a multiple of itself if need be, we obtain integers $L_x, N_x\in \N$ such that
\[
N_x[\iota_x\circ f_x] = L_x[\eta_x\circ u]
\]
in $\pi_j(\U_0(M_2(A(x))))$. Hence, there is a path $g_x : [0,1]\to C_{\ast}(S^j,\U_0(M_2(A(x))))$ such that $g_x(0)  = \mu^{M_2(A(x))}_{L_x}(\eta_x\circ u)$ and $g_x(1) = \mu^{M_2(A(x))}_{N_x}(\iota_x\circ f_x)$. Choose $e_x \in C_{\ast}(S^j,A)$ such that $\pi_x\circ e_x = f_x$. Note that $e_x$ may not be a quasi-unitary, but we may ensure that $\|e_x\| = \|f_x\| \leq 2$. Since the map
\[
\eta_x : C_{\ast}(S^j,\U_0(M_2(A))) \to \eta_x(C_{\ast}(S^j,\U_0(M_2(A))))
\]
is a fibration, $g_x$ lifts to a path $G_x : [0,1]\to C_{\ast}(S^j,\U_0(M_2(A)))$ such that $G_x(0) = \mu^{M_2(A)}_{L_x}(u)$. Let $b_x := G_x(1)$, and  so that $\eta_x \circ b_x = \mu^{M_2(A(x))}_{N_x}(\iota_x\circ\pi_x\circ e_x)$. 
Choose $\delta > 0$ so that conclusion of \cref{lem:quasi_unitary_close} holds for $\epsilon = 1$ and $N=N_x$. Since $A$ is a continuous $C(X)$-algebra, there is a closed neighbourhood $Y_x$ of $x$ such that
\[
\|\pi_{Y_x}\circ(e_x^{\ast}\cdot e_x)\| < \delta, \|\pi_{Y_x}\circ(e_x\cdot e_x^{\ast})\| < \delta
\]
and $\|\eta_{Y_x}\circ b_x -\mu^{M_2(A(Y_x))}_{N_x}(\eta_{Y_x}\circ\iota\circ e_x)\| < 1$. By \cref{lem:quasi_unitary_close}, there is a quasi-unitary $d_x \in C_{\ast}(S^j,\U_0(A(Y_x)))$ such that $\|\mu^{A(Y_x)}_{N_x}(d_x) - \mu^{A(Y_x)}_{N_x}(\pi_{Y_x}\circ e_x)\| < 1$, so that
\[
\|\mu^{M_2(A(Y_x))}_{N_x}(\iota_{Y_x}\circ d_x) - \eta_{Y_x}\circ b_x\| < 2.
\]
By \cref{lem:quasi_homotopy}, $\mu^{M_2(A(Y_x))}_{N_x}(\iota_{Y_x}\circ d_x) \sim \eta_{Y_x}\circ b_x$ in $C_{\ast}(S^j,\U_0(M_2(A(Y_x)))$. Hence, $\iota_{Y_x}\circ \mu^{A(Y_x)}_{N_x}(d_x) \sim \mu^{M_2(A(Y_x))}_{L_x}(\eta_{Y_x}\circ u)$. As before, since $X$ is compact and zero-dimensional, we may choose a finite refinement of $\{Y_x : x\in X\}$ consisting of disjoint cl-open sets, which we denote by $\{Y_{x_1}, Y_{x_2},\ldots, Y_{x_n}\}$. Then, by \cref{lem:cx_algebra_pullback},
\[
A \cong A(Y_{x_1})\oplus A(Y_{x_2})\oplus \ldots \oplus A(Y_{x_n})
\]
via the map $a\mapsto (\pi_{Y_{x_1}}(a), \pi_{Y_{x_2}}(a), \ldots, \pi_{Y_{x_n}}(a))$. Similarly,
\[
M_2(A) \cong M_2(A(Y_{x_1}))\oplus M_2(A(Y_{x_2}))\oplus \ldots \oplus M_2(A(Y_{x_n}))
\]
via the map $b \mapsto (\eta_{Y_{x_1}}(b), \eta_{Y_{x_2}}(b),\ldots, \eta_{Y_{x_n}}(b))$. Define $L:=\text{lcm}_{1\leq i\leq n}(L_{x_i})$, so that
\[
\iota_{Y_{x_i}}\circ c_{x_i}\sim \mu^{M_2(A(Y_{x_i}))}_L(\eta_{Y_{x_i}}\circ u)
\]
in $C_{\ast}(S^j,\U_0(M_2(A(Y_{x_i})))$, where $c_{x_i}\in C_{\ast}(S^j,\U_0(A(Y_{x_i})))$ is an appropriate power of $d_{x_i}$. Choose $\omega\in C_{\ast}(S^j,\U_0(A))$ such that $\pi_{Y_{x_i}}\circ\omega = c_{x_i}$ for all $1\leq i\leq n$. Furthermore, for each $1\leq i\leq n$,
\[
\eta_{Y_{x_i}}\circ\iota\circ\omega = \iota_{Y_{x_i}}\circ c_{x_i} \sim \mu^{M_2(A(Y_{x_i}))}_L(\eta_{Y_{x_i}}\circ u)
\]
in $C_{\ast}(S^j,\U_0(M_2(A(Y_{x_i})))$, so that $\iota\circ\omega \sim \mu^{M_2(A)}_L(u)$ in $C_{\ast}(S^j,\U_0(M_2(A)))$. Thus,
\[
\iota_{\ast}\otimes \text{id}\bigg([\omega]\otimes\frac{1}{Lm}\bigg)=[u]\otimes\frac{1}{m}.
\]
This proves the surjectivity of $\iota_{\ast}\otimes \text{id}$.
\end{proof}

The next few lemmas allow us to extend this argument to higher dimensional spaces.
%
%\begin{lem}\label{lem:division}
%Let $(Y,e,\mu)$ be an $H$-space such that $Y$ is rational. Let $f\in C_{\ast}(S^j,Y)$ be such that $[\mu_n(f)] = 0$ in $\pi_j(Y)$, implemented by a homotopy $H$. Then, there exists a homotopy $G$, giving $[f] = 0$ in $\pi_j(Y)$ such that $\mu_n(G)$ is path homotopic to $H$ in $C_{\ast}(S^j,Y)$.
%\end{lem}
%\begin{proof}
%Let $H:[0,1]\to C_{\ast}(S^j,Y)$ be a path such that $H(0)=0$ and $H(1)=\mu_n(f)$. Since $Y$ is a rational space, $[f]=0$ in $\pi_j(Y)$. Let $L:[0,1]\to C_{\ast}(S^j,Y)$ be such that $L(0)=0$ and $L(1)=f$. Thus $\mu_n(L):[0,1]\to C_{\ast}(S^j,Y)$ is a path that satisfies $\mu_n(L)(0)=0$ and $\mu_n(L)(1)=\mu_n(f)$. Note that $\pi_1(C_{\ast}(S^j,Y))$ is itself a $\Q$-vector space \cite[Theorem 4.18]{lupton} and $H\bullet \overline{\mu_n(L)}$ is a loop in $C_{\ast}(S^j,Y)$. Thus, there exists $[T] \in \pi_1(C_{\ast}(S^j,Y))$ such that 
%\[
%[H\bullet \overline{\mu_n(L)}]=n[T]=[\mu_n(T)].
%\]
%Hence, $G:=T\bullet L$ is the required homotopy (since the operation $\mu_n$ respects concatenation).
%\end{proof}

\begin{lem}\label{lem:rational_homotopy}
Let $(Y,e, \mu)$ be an H-space, where $Y$ is a connected CW-complex. Let $r:(Y,e,\mu)\to (Y_{\Q},e_\Q,\rho)$ be the rationalization map from \cref{prop: h_space_rationalization}, and let $j\geq 1$ be a fixed integer. 
\begin{enumerate}
\item Let $[f]\in \pi_j(Y)$ and $n\in \N$, and suppose there is a path $H:[0,1]\to C_{\ast}(S^j,Y)$ such that $H(0) = e$ and $H(1) = \mu_n(f)$. Then, there exists a path $G:[0,1]\to C_{\ast}(S^j,Y_{\Q})$ with $G(0) = e_{\Q}$ and $G(1) = r\circ f$, and such that
\[
r\circ H\sim_h \rho_n(G)
\]
in $C_{\ast}(S^j,Y_{\Q})$. 
\item Let $[f] \in \pi_j(Y)$, and suppose there is a path $G':[0,1]\to C_{\ast}(S^j,Y_{\Q})$ such that $G'(0) = e_{\Q}$ and $G'(1) = r\circ f$. Then, there exists a natural number $N\in\N$ and a path $H':[0,1]\to C_{\ast}(S^j,Y)$ with $H'(0) = e$ and $H'(1) = \mu_N(f)$, such that
\[
r\circ H'\sim_h \rho_N(G')
\]
in $C_{\ast}(S^j,Y_{\Q})$.
\end{enumerate}
\end{lem}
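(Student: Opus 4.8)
The plan is to prove both parts by exploiting the universal property of the rationalization together with the homotopy-lifting property of the cofibration $r$, treating the two statements as mirror images of each other.

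For part (1), first observe that $H$ is a path in $C_*(S^j,Y)$ from $e$ to $\mu_n(f)$, which (adjointing) is precisely a based homotopy in $Y$ between the constant map and $\mu_n(f)$; equivalently, it witnesses that $n[f]=0$ in $\pi_j(Y)$, but more usefully it is a concrete nullhomotopy we want to transport. I would compose everything with $r$. The first step is to produce the path $G$ itself: since $r$ induces a rational isomorphism and $[f]$ maps to $[r\circ f]\in\pi_j(Y_\Q)$, and since $\pi_j(Y_\Q)$ is already a $\Q$-vector space, the fact that $n[r\circ f]=[r\circ\mu_n(f)]=[\rho_n(r\circ f)]$ is nullhomotopic (via $r\circ H$) gives $[r\circ f]=0$ in $\pi_j(Y_\Q)$ (dividing by $n$), hence there is \emph{some} path $G$ from $e_\Q$ to $r\circ f$. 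The real content is the last assertion: that $G$ can be chosen so that $r\circ H\sim_h\rho_n(G)$ rel endpoints. Here I would use that $C_*(S^j,Y)\to C_*(S^j,Y_\Q)$ is again a rationalization on the level of the $H$-spaces (by \cref{prop: h_space_rationalization} applied to the $H$-space $C_*(S^j,Y)$, or directly since $\Omega^j$ of a rationalization is a rationalization), so $r_*:\pi_1(C_*(S^j,Y))\otimes\Q\to\pi_1(C_*(S^j,Y_\Q))$ is an isomorphism — but one has to be careful because these loop/path spaces need basepoints and we are dealing with paths, not loops. The cleanest route: concatenate $r\circ H$ with $\overline{\rho_n(G)}$ to get a loop at $e_\Q$ in $C_*(S^j,Y_\Q)$; I want to show $G$ can be adjusted so this loop is nullhomotopic. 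Using strict commutativity of \cref{diag: rationalization_h_space} (raised to powers: $r\circ\mu_n = \rho_n\circ(r\times\cdots\times r)$ on the nose), $r\circ H = \rho_n(r\circ \tilde H)$ where $\tilde H$ is $H$ viewed appropriately — wait, that is not quite right since $H$ is a path, not an $n$-tuple. Instead: define $G_0$ to be \emph{any} path from $e_\Q$ to $r\circ f$; then $r\circ H\bullet\overline{\rho_n(G_0)}$ is a loop in $C_*(S^j,Y_\Q)$, and I modify $G_0$ by a loop $\gamma$ at $r\circ f$ so that $\rho_n$ applied to the modification cancels this discrepancy. Since multiplication by $n$ (i.e. $\rho_n$ on the group $\pi_1$) is an isomorphism on the rational, uniquely divisible group $\pi_1(C_*(S^j,Y_\Q))$, such a $\gamma$ exists. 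This is the step I expect to be the main obstacle: making the bookkeeping of concatenations, the strict vs. homotopy commutativity, and the passage from ``$n$ times'' to ``divide by $n$'' fully rigorous without basepoint pathologies.

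For part (2), the argument runs in the opposite direction and uses that $r$ is a cofibration. Given $G'$ from $e_\Q$ to $r\circ f$, I want a lift. Consider the square with $S^j\times\{0,1\}\cup(\text{pt})\times[0,1]$ mapping to $Y$ (sending $S^j\times\{0\}$ and the basepoint interval to $e$, and $S^j\times\{1\}$ to $f$) and $S^j\times[0,1]$ mapping to $Y_\Q$ via the adjoint of $G'$; since $r$ is not a fibration we cannot lift directly, but after replacing $f$ by $\mu_N(f)$ for suitable $N$ we can, because rationally the obstruction to lifting dies: the obstructions to lifting through $r:Y\to Y_\Q$ live in $H^{*}(S^j\times I, S^j\times\partial I;\pi_*(\text{fiber of }r))$, and the homotopy fiber of a rationalization has finite homotopy groups (it is a torsion space), so multiplying the relevant element by a large enough $N$ (the exponent of the finitely many obstruction groups in the relevant range) kills it. Concretely: $[r\circ f]=[G'(1)]$ and $[r\circ f]$ lifts, but the chosen lift of the boundary data may not extend; replacing $f$ by $\mu_N(f)$ and $G'$ by $\rho_N(G')$ rescales the obstruction by $N$, and for $N$ a multiple of the exponents of the obstruction groups (which are finite since they are homotopy groups of the fiber of a rationalization, tensored appropriately and finite in each degree) it vanishes, yielding $H'$ with $H'(0)=e$, $H'(1)=\mu_N(f)$. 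The compatibility $r\circ H'\sim_h\rho_N(G')$ then follows because both are lifts/descents of the same boundary data and the space of such fillings is connected once the obstruction vanishes — or more directly, $r\circ H'$ and $\rho_N(G')$ are two paths in $C_*(S^j,Y_\Q)$ with the same endpoints, and their difference loop is again a torsion class that can be absorbed, or one arranges the lift so that it maps under $r$ to exactly $\rho_N(G')$ up to the prescribed homotopy.

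In both parts the unifying technique is: (a) use strict commutativity of \cref{diag: rationalization_h_space} and its iterates so that $\rho_n\circ(r\times\cdots)=r\circ\mu_n$ holds on the nose; (b) use that $C_*(S^j,-)$ sends rationalizations of $H$-spaces to rationalizations (apply \cref{prop: h_space_rationalization} or the fact that loop spaces of rationalizations are rationalizations), so the relevant fundamental groups are rational vector spaces on which multiplication by any nonzero integer is invertible; (c) for the ``cofibration direction'' (part 2), invoke that the homotopy fiber of a rationalization is a space with finite homotopy groups, so obstructions to lifting are torsion and get killed by passing from $f$ to $\mu_N(f)$. The main obstacle throughout is the careful handling of paths (as opposed to loops) between non-basepoint endpoints, and correctly tracking how concatenation interacts with the $H$-space powers $\mu_n$ and $\rho_n$; I would isolate this in a preliminary observation that for a loop $\ell$ at $y$ and a path $p$ ending at $y$, $\rho_n(p\bullet\ell)\sim_h\rho_n(p)\bullet(\text{conjugate of }\rho_n\text{-related loop})$, reducing everything to statements about $\pi_1$.
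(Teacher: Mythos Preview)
For Part (1), your approach is essentially identical to the paper's: pick an arbitrary path $L$ from $e_\Q$ to $r\circ f$ (which exists since $n[r\circ f]=0$ forces $[r\circ f]=0$ in the $\Q$-vector space $\pi_j(Y_\Q)$), form the difference loop $(r\circ H)\bullet\overline{\rho_n(L)}$, use unique divisibility of $\pi_1(C_*(S^j,Y_\Q))$ by $n$ to write this loop as $n[T]=[\rho_n(T)]$, and set $G:=T\bullet L$. Your worries about concatenation versus $H$-space powers are dispatched in the paper with the single remark that $\rho_n$ respects concatenation, so this part is fine.

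For Part (2), however, your obstruction-theoretic route contains a genuine error and misses a much simpler argument. You assert that the homotopy fiber $F$ of $r:Y\to Y_\Q$ has \emph{finite} homotopy groups, and propose to take $N$ equal to the exponent of the relevant obstruction groups. This is false: from the long exact sequence, $\pi_k(F)$ interpolates between the torsion of $\pi_k(Y)$ and the cokernel of $\pi_k(Y)\to\pi_k(Y)\otimes\Q$, and the latter is typically an infinite torsion group of unbounded exponent (already $\Q/\Z$ when $\pi_k(Y)=\Z$). So your ``exponent of the finitely many obstruction groups'' does not exist in general. One might try to salvage the idea by observing that the single relevant obstruction class is a specific torsion element and is therefore killed by \emph{its own} order, but you would still owe an argument that replacing $(f,G')$ by $(\mu_N(f),\rho_N(G'))$ genuinely multiplies the obstruction by $N$, and your final compatibility claim (``the space of such fillings is connected once the obstruction vanishes'') is unjustified: two fillings in $Y_\Q$ of the same boundary data differ by an element of $\pi_{j+1}(Y_\Q)$, a nonzero $\Q$-vector space in general.

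The paper sidesteps all of this with an elementary observation you overlooked: the very existence of $G'$ says $[r\circ f]=0$ in $\pi_j(Y_\Q)$, hence $[f]\otimes 1=0$ in $\pi_j(Y)\otimes\Q$, hence $[f]$ has finite order, say $n$, in $\pi_j(Y)$. This immediately hands you a path $K$ in $C_*(S^j,Y)$ from $e$ to $\mu_n(f)$ --- no lifting or obstruction theory required. From here Part (2) is genuinely the mirror of Part (1): the loop $(r\circ K)\bullet\overline{\rho_n(G')}$ lies in $\pi_1(C_*(S^j,Y_\Q))\cong\pi_1(C_*(S^j,Y))\otimes\Q$, so it equals $[r\circ h]/m$ for some loop $h$ in $C_*(S^j,Y)$ and some $m\in\Z$; then $H':=\overline{h}\bullet\mu_m(K)$ with $N:=mn$ does the job. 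The key missing idea is that finite order of $[f]$ already furnishes a path \emph{upstairs}, so you never need to lift anything.
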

\begin{proof}
~\begin{enumerate}
\item Since $Y_{\Q}$ is a rational space, $[r\circ f] = 0$ in $\pi_j(Y_{\Q})$. Let $L:[0,1]\to C_{\ast}(S^j,Y_{\Q})$ be such that $L(0) = e_{\Q}$ and $L(1) = r\circ f$. Thus, $\rho_n(L) : [0,1]\to C_{\ast}(S^j,Y_{\Q})$ is a path that satisfies $\rho_n(L)(0) = e_{\Q}$ and $\rho_n(L)(1) = \rho_n(r\circ f)$. Note that $\pi_1(C_{\ast}(S^j,Y_{\Q}))$ is itself a $\Q$-vector space \cite[Theorem II.3.11]{hilton} and $(r\circ H)\bullet \overline{\rho_n(L)}$ is a loop in $C_{\ast}(S^j,Y_{\Q})$. Thus, there exists $[T]\in \pi_1(C_{\ast}(S^j,Y_{\Q}))$ such that
\[
[(r\circ H)\bullet \overline{\rho_n(L)}] = n[T] = [\rho_n(T)].
\]
Hence, $G := T\bullet L$ is the required homotopy (since the operation $\rho_n$ respects concatenation).
\item Since $[r\circ f]=0$, in $\pi_j(Y_{\Q})$, under $r_{\ast}\otimes \Q:\pi_j(Y)\otimes\Q\to\pi_j(Y_{\Q})$ we get that $[r\circ f]\cong [f]\otimes 1=0$ in $\pi_j(Y)\otimes\Q$. Hence by elementary group theory, this implies that $[f]$ has finite order in $\pi_j(Y)$. Thus, there exists $n\in\N$ such that $n[f]=0$ in $\pi_j(Y)$ say by homotopy $K:[0,1]\to C_{\ast}(S^j,Y)$ such that
\[
K(0)=e, \quad K(1)= \mu_n(f).
\]
Now, by a similar argument to that of part (1), $(r\circ K)\bullet \overline{\rho_n(G')}$ is a loop in $C_{\ast}(S^j,Y_{\Q})$, which is a rational space. Hence, there exists $[T] \in \pi_1(C_{\ast}(S^j,Y_{\Q}))$ satisfying
\[
n[T]=[\rho_n(T)] = [r\circ K\bullet \overline{\rho_n(G')}].
\]
Now $n[T]\in\pi_1(C_{\ast}(S^j,Y_{\Q}))\cong\pi_1(C_{\ast}(S^j,Y))\otimes\Q$, so there exists $[h] \in \pi_1(C_{\ast}(S^j,Y))$ and $m \in \Z$ such that
\[
n[T] = \frac{[r\circ h]}{m}.
\]
Thus, by the fact that $\rho$ is homotopy-associative,
\[
m[r\circ K\bullet\overline{\rho_n(G')}]=[\rho_m(r\circ K)\bullet\overline{\rho_{mn}(G')}]=mn[T]=[r\circ h].
\]
Thus, if $H':=\overline{h}\bullet \mu_m(K)$ and $N:=mn$, then $H'(0)=e, H'(1)=\mu_N(f)$, and $r\circ H'$ is path homotopic to $\rho_N(G')$.\qedhere
\end{enumerate}
\end{proof}

The next result will be useful to us in the following context: Suppose $B,C,$ and $D$ be $C^*$-algebras, and $\delta : B\to D$ and $\gamma : C\to D$ are $\ast$-homomorphisms. Let $A = B\oplus_D C$ be the pullback as in \cref{eqn:pullback_diag}. Then, $\U(A)$ may be described as a pullback (in the category of pointed topological spaces) by the induced diagram
\[
\xymatrix{
\U(A)\ar[r]^{\phi}\ar[d]_{\psi} & \U(B)\ar[d]_{\delta} \\
\U(C)\ar[r]_{\gamma} & \U(D)
}
\]
In other words, a pair $(b,c)\in A$ is in $\U(A)$ if and only if $b\in \U(B)$ and $c\in \U(C)$. We now introduce some notation we will use in the future: Given a path $G:[0,1]\to Y$ in a topological space $Y$, $\widetilde{G}$ is a path given by 
\begin{equation}\label{eqn: tilde_path}
  \widetilde{G}(s) =
  \begin{cases}
    e_{G(0)}(3s) &: \text{for } 0 \leq x \leq \frac{1}{3}\\
    G(3s-1) &: \text{for } \frac{1}{3} \leq x \leq \frac{2}{3} \\
    e_{G(1)}(3s-2) &: \text{for } \frac{2}{3} \leq x \leq 1
  \end{cases}
\end{equation}

\begin{lem}\label{hom_pullback}
Consider a pullback diagram of pointed topological spaces given by
\[
\xymatrix{P\ar[r]^{\phi_1}\ar[d]_{\phi_2}& X\ar[d]^{\pi_1}\\
Y\ar[r]_{\pi_2}& Z}
\]such that one of the maps $\pi_1$ or $\pi_2$ is a Serre fibration. Let $p=(x,y)$, $p'=(x',y')$ be in $P$, such that there exists paths 
\[
G_1:[0,1]\to X,\quad G_2:[0,1]\to Y
\]with the property that $G_1(0)=x$, $G_1(1)=x'$, $G_2(0)=y$, $G_2(1)=y'$ and $\pi_1\circ G_1\sim_h \pi_2\circ G_2$ in $Z$. Then, there is a path $H:[0,1]\to P$ such that $H(0) = p$ and $H(1) = p'$. 
\end{lem}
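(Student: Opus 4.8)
The plan is to reduce everything to lifting a single homotopy across a Serre fibration. Since the pullback $P$, the points $p,p'$, and the hypothesis $\pi_1\circ G_1\sim_h\pi_2\circ G_2$ are all symmetric in the two legs of the square, I may assume without loss of generality that $\pi_1:X\to Z$ is the Serre fibration (in the other case one lifts $G_2$ instead, builds a path in $Y$, and pairs it with $\widetilde{G_1}$). Write $z:=\pi_1(x)=\pi_2(y)$ and $z':=\pi_1(x')=\pi_2(y')$, and let $\Phi:[0,1]\times[0,1]\to Z$ be a path homotopy realising $\pi_1\circ G_1\sim_h\pi_2\circ G_2$, so that $\Phi(\cdot,0)=\pi_1\circ G_1$, $\Phi(\cdot,1)=\pi_2\circ G_2$, $\Phi(0,\cdot)=e_z$, and $\Phi(1,\cdot)=e_{z'}$.

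Next I would invoke the homotopy lifting property of the Serre fibration $\pi_1$ with respect to the interval $[0,1]\cong D^1$: the path $G_1$ lifts $\Phi(\cdot,0)$, so there is a map $\widetilde{\Phi}:[0,1]\times[0,1]\to X$ with $\pi_1\circ\widetilde\Phi=\Phi$ and $\widetilde\Phi(\cdot,0)=G_1$. The point to watch is that, although $\Phi(0,\cdot)$ and $\Phi(1,\cdot)$ are constant, their lifts $\sigma_0:=\widetilde\Phi(0,\cdot)$ and $\sigma_1:=\widetilde\Phi(1,\cdot)$ need not be: they are paths in the fibres $\pi_1^{-1}(z)$ and $\pi_1^{-1}(z')$ running from $x$ to $\widetilde\Phi(0,1)$ and from $x'$ to $\widetilde\Phi(1,1)$ respectively. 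This is the only genuine wrinkle in the argument, and it is exactly what the bookkeeping device \cref{eqn: tilde_path} is designed to absorb.

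Then I would splice these correction paths onto $\widetilde\Phi(\cdot,1)$: define $\alpha:[0,1]\to X$ to be the concatenation $\sigma_0\bullet\widetilde\Phi(\cdot,1)\bullet\overline{\sigma_1}$, reparametrised so that the three pieces occupy $[0,1/3]$, $[1/3,2/3]$ and $[2/3,1]$. The junctions match ($\sigma_0(1)=\widetilde\Phi(0,1)$ and $\overline{\sigma_1}(0)=\widetilde\Phi(1,1)$), so $\alpha$ is a well-defined path with $\alpha(0)=G_1(0)=x$ and $\alpha(1)=G_1(1)=x'$. Composing with $\pi_1$ collapses the two end pieces, since $\pi_1\circ\sigma_0=e_z$ and $\pi_1\circ\overline{\sigma_1}=\overline{e_{z'}}=e_{z'}$, and leaves $\pi_2\circ G_2$ in the middle third; as $z=\pi_2(G_2(0))$ and $z'=\pi_2(G_2(1))$, comparing with \cref{eqn: tilde_path} yields precisely $\pi_1\circ\alpha=\pi_2\circ\widetilde{G_2}$.

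Finally, the equality $\pi_1\circ\alpha=\pi_2\circ\widetilde{G_2}$ is exactly the assertion that $H(s):=(\alpha(s),\widetilde{G_2}(s))$ lies in $P$ for every $s$, so $H:[0,1]\to P$ is a (continuous) path, and $H(0)=(x,\widetilde{G_2}(0))=(x,y)=p$ while $H(1)=(x',\widetilde{G_2}(1))=(x',y')=p'$, as required. I expect the continuity of $\widetilde\Phi$, the junction checks for the concatenation, and the parametrisation comparison against \cref{eqn: tilde_path} to be entirely routine; the one substantive step is recognising that the fibration's homotopy lifting property must be applied to the full square $[0,1]^2$ and then repaired along $\{0,1\}\times[0,1]$, rather than hoping for a lift that already fixes $x$ and $x'$.
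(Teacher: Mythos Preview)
Your argument is correct and is essentially identical to the paper's proof: the paper also assumes $\pi_1$ is the fibration, lifts the path homotopy to a square $F'$ in $X$ with $F'(\cdot,0)=G_1$, and then concatenates the three edges $F'(0,\cdot)$, $F'(\cdot,1)$, $\overline{F'(1,\cdot)}$ (reparametrised to thirds) to obtain a path $G_X$ satisfying $\pi_1\circ G_X=\pi_2\circ\widetilde{G_2}$, yielding the pair $(G_X,\widetilde{G_2})$ in $P$. Your $\alpha$ is precisely the paper's $G_X$, and your exposition of why the side correction paths $\sigma_0,\overline{\sigma_1}$ are needed is, if anything, slightly more careful than the original.
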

\begin{proof}
Assume without loss of generality that $\pi_1$ is a Serre fibration. Then since, $\pi_1\circ G_1\sim_h \pi_2\circ G_2$, there is a homotopy $F:[0,1]\times[0,1]\to D$ such that 
\begin{equation*}
\begin{split}
F(s,0)=\pi_1\circ G_1(s), &\quad F(s,1)=\pi_2\circ G_2(s)\\
F(0,t)=\pi_1(x)=\pi_2(y), &\quad F(1,t)=\pi_1(x')=\pi_2(y').
\end{split}
\end{equation*}
Then, $F$ lifts to a homotopy $F':[0,1]\times [0,1]\to X$, such that 
\[
F'(s,0)=G_1,\quad \pi_1\circ F'=F,\quad \pi_1\circ F'(t,1)=\pi_2\circ G_2(t).
\]
So if we define
\[
  {G_X}(s) =
  \begin{cases}
    F'(0,3s), & \text{for } 0 \leq s \leq \frac{1}{3}\\
    F'(3s-1,1), & \text{for } \frac{1}{3} \leq s \leq \frac{2}{3} \\
    F'(1,3-3s), & \text{for } \frac{2}{3} \leq s \leq 1
  \end{cases}
\]
then $\pi_1\circ G_X=\pi_2\circ \widetilde{G_2}$. Therefore, the pair $(G_X,\widetilde{G_2})$ defines a path in $P$ from $p$ to $p'$.
\end{proof}

\begin{lem}\label{lem:equivalence}
Let $X$ and $Y$ be two connected topological spaces, and $i:X\to Y$ and $q:Y\to X$ be homotopy inverses of each other. For $x\in X$, let $H:[0,1]\rightarrow Y$ be a path in $Y$, such that
\[
H(0)=i(x),\quad H(1)=i\circ q\circ i(x)
\]
Then, there exists a path $T:[0,1]\rightarrow X$ such that
\[
T(0)=x, \quad T(1)=q\circ i(x)
\]
and $i\circ T$ is path homotopic to $H$ in $Y$.
\end{lem}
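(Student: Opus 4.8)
The plan is to exploit the standard fact that a homotopy equivalence induces an isomorphism on fundamental groups at \emph{any} basepoint, regardless of whether the homotopies involved preserve basepoints, and to use this to correct a naive candidate path by a loop.

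First I would use the relation $q\circ i\simeq\text{id}_X$: fixing a homotopy $K:X\times[0,1]\to X$ with $K(\cdot,0)=\text{id}_X$ and $K(\cdot,1)=q\circ i$, the path $T_0(s):=K(x,s)$ runs in $X$ from $x$ to $q(i(x))$. This $T_0$ is a first approximation to the desired $T$; what can go wrong is that $i\circ T_0$ need not be path homotopic to $H$, and the point is to repair this discrepancy by pre-concatenating $T_0$ with a suitable loop based at $x$.

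Next, since $i\circ T_0$ and $H$ are both paths in $Y$ from $i(x)$ to $i(q(i(x)))$, the concatenation $H\bullet\overline{i\circ T_0}$ is a loop in $Y$ based at $i(x)$. As $i$ is a homotopy equivalence (with homotopy inverse $q$), the induced map $i_{\ast}:\pi_1(X,x)\to\pi_1(Y,i(x))$ is an isomorphism, so I can choose a loop $\gamma$ in $X$ based at $x$ with $i\circ\gamma\sim_h H\bullet\overline{i\circ T_0}$ in $Y$. Setting $T:=\gamma\bullet T_0$ then gives a path with $T(0)=x$ and $T(1)=q(i(x))$, and
\[
i\circ T=(i\circ\gamma)\bullet(i\circ T_0)\sim_h\big(H\bullet\overline{i\circ T_0}\big)\bullet(i\circ T_0)\sim_h H\bullet\big(\overline{i\circ T_0}\bullet(i\circ T_0)\big)\sim_h H,
\]
where the last two equivalences use associativity of concatenation up to path homotopy and the fact that $\overline{g}\bullet g$ is path homotopic to a constant path.

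I do not anticipate a serious obstacle: once the basepoints are set up, the argument is essentially bookkeeping with paths. The one place that warrants care is the claim that $i_{\ast}$ is an isomorphism on $\pi_1$ at the particular basepoint $x$ — this holds because $i$ is a homotopy equivalence, but it relies on the (standard, though slightly delicate) fact that this remains true even when the chosen homotopy inverse and the homotopies witnessing $q\circ i\simeq\text{id}_X$ and $i\circ q\simeq\text{id}_Y$ do not respect basepoints.
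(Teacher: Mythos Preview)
Your proof is correct and follows essentially the same route as the paper: both produce an auxiliary path in $X$ from $x$ to $q(i(x))$ using $q\circ i\simeq\text{id}_X$ (your $T_0$ is the reverse of the paper's $S$), form the loop $H\bullet(i\circ S)=H\bullet\overline{i\circ T_0}$ at $i(x)$, invoke surjectivity of $i_{\ast}$ on $\pi_1$ to pull it back to a loop at $x$, and concatenate. Your write-up is slightly more explicit in verifying the final path homotopy, but the argument is the same.
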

\begin{proof}
Since $q\circ i\sim_h \text{id}_X$, there is a path $S:[0,1]\to X$ such that $S(0)=q\circ i(x)$, and $S(1)=x$. Thus, $H\bullet (i\circ S)$ is a loop in $Y$ based at $i(x)$. Since $\pi_1(Y)=i_{\ast}(\pi_1(X))$, there exists a loop $L$ based at $x$ in $X$ such that 
\[
i_{\ast}[L]=[H\bullet (i\circ S)].
\]
Then, $T:=L\bullet\overline{S}$ is the required path.
\end{proof}

Note that, if $B$ is a C*-algebra, then the rationalization $\U_0(B)_{\Q}$ of $\U_0(B)$ carries an $H$-space structure by \cref{prop: h_space_rationalization}. We shall use $\rho^B$ to denote this multiplication map. Furthermore, we write $e_{\Q}$ and $e^2_{\Q}$ for the units of $\U_0(B)_{\Q}$ and $\U_0(M_2(B))_{\Q}$ respectively.

\begin{prop}\label{prop:homotopy}
Let $B$ be a rationally $K$-stable C*-algebra, $[f] \in \pi_j(\U_0(B))$ and $n\in \N$ such that $[\iota\circ f]$ is is an element of order $n$ in $\pi_j(\U_0(M_2(B))$. Let $H:[0,1]\to C_{\ast}(S^j,\U_0(M_2(B)))$ be a path satisfying
\[
H(0) = 0\text{ and } H(1) = \mu^{M_2(B)}_n(\iota\circ f)=\iota\circ \mu^B_n(f).
\]
Then, there exists a natural number $N\in \N$ and a path $H':[0,1]\to C_{\ast}(S^j,\U_0(B))$ such that
\[
\mu^{M_2(B)}_N(H)\sim_h \iota\circ H'
\]
in $C_{\ast}(S^j,\U_0(M_2(B)))$.
\end{prop}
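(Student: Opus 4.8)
The plan is to transport the data across the rationalization map $r^B : \U_0(B) \to \U_0(B)_{\Q}$ and $r^{M_2(B)} : \U_0(M_2(B)) \to \U_0(M_2(B))_{\Q}$, exploit rational $K$-stability of $B$ to invert $\iota$ on the rationalized $H$-spaces, and then pull the resulting homotopy back down to $\U_0(M_2(B))$ and $\U_0(B)$ using \cref{lem:rational_homotopy}. More precisely: first, since $[\iota \circ f]$ has order $n$, the path $H$ witnesses $\mu^{M_2(B)}_n(\iota \circ f) \sim 0$, so by \cref{lem:rational_homotopy}(1) (applied with $Y = \U_0(M_2(B))$) there is a path $G : [0,1] \to C_{\ast}(S^j, \U_0(M_2(B))_{\Q})$ with $G(0) = e^2_{\Q}$, $G(1) = r^{M_2(B)} \circ \iota \circ f$, and $r^{M_2(B)} \circ H \sim_h \rho^{M_2(B)}_n(G)$.

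Next I would use that $B$ is rationally $K$-stable, so $\iota_{\ast} \otimes \mathrm{id} : \pi_{\ast}(\U_0(B)) \otimes \Q \to \pi_{\ast}(\U_0(M_2(B))) \otimes \Q$ is an isomorphism — equivalently, the induced $H$-space map $\iota_{\Q} : \U_0(B)_{\Q} \to \U_0(M_2(B))_{\Q}$ (the map on rationalizations induced by functoriality, intertwining $r^B$ and $r^{M_2(B)}$) is a weak homotopy equivalence, hence a homotopy equivalence of CW complexes. Let $q : \U_0(M_2(B))_{\Q} \to \U_0(B)_{\Q}$ be a homotopy inverse. Apply $q$ to $G$: the path $q \circ G$ joins $q(e^2_{\Q})$ to $q \circ r^{M_2(B)} \circ \iota \circ f = q \circ \iota_{\Q} \circ r^B \circ f$. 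Using \cref{lem:equivalence} with $X = \U_0(B)_{\Q}$, $Y = \U_0(M_2(B))_{\Q}$, $i = \iota_{\Q}$, applied pointwise on $S^j$ (or rather on the function spaces $C_\ast(S^j,-)$, which also form an $H$-space and whose rationalizations behave compatibly), I can replace $q \circ G$ by a path $G'' : [0,1] \to C_\ast(S^j, \U_0(B)_{\Q})$ with $G''(0) = e_{\Q}$, $G''(1) = r^B \circ f$, and $\iota_{\Q} \circ G'' \sim_h G$ after adjusting basepoint paths. Then $\rho^{M_2(B)}_N(G) \sim_h \iota_{\Q} \circ \rho^B_N(G'')$ for every $N$.

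Now invoke \cref{lem:rational_homotopy}(2) with $Y = \U_0(B)$ and the path $G''$: there is $N \in \N$ and a path $H' : [0,1] \to C_\ast(S^j, \U_0(B))$ with $H'(0) = 0$, $H'(1) = \mu^B_N(f)$, and $r^B \circ H' \sim_h \rho^B_N(G'')$ in $C_\ast(S^j, \U_0(B)_{\Q})$. Chaining the homotopies: $r^{M_2(B)} \circ \iota \circ H' = \iota_{\Q} \circ r^B \circ H' \sim_h \iota_{\Q} \circ \rho^B_N(G'') \sim_h \rho^{M_2(B)}_N(G) \sim_h \rho^{M_2(B)}_N(r^{M_2(B)} \circ H) = r^{M_2(B)} \circ \mu^{M_2(B)}_N(H)$. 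Thus $\iota \circ H'$ and $\mu^{M_2(B)}_N(H)$ become path-homotopic after applying $r^{M_2(B)}$; since these are genuine paths in $C_\ast(S^j, \U_0(M_2(B)))$ with the same endpoints, and $r^{M_2(B)}$ is a rationalization, the difference class $[(\iota \circ H') \bullet \overline{\mu^{M_2(B)}_N(H)}]$ in $\pi_1(C_\ast(S^j, \U_0(M_2(B))))$ maps to zero rationally, hence is torsion; replacing $N$ by a further multiple (and correspondingly powering up $H'$) kills it, giving $\mu^{M_2(B)}_N(H) \sim_h \iota \circ H'$ on the nose.

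The main obstacle I anticipate is the bookkeeping around the function spaces $C_\ast(S^j, -)$: I am repeatedly using that $C_\ast(S^j, \U_0(B))$ is itself a (homotopy-associative, connected) $H$-space whose rationalization is $C_\ast(S^j, \U_0(B)_{\Q})$ with multiplication induced pointwise by $\rho^B$, that $\iota$ induces the expected map on these, and that \cref{lem:equivalence,lem:rational_homotopy} apply in this setting — basepoint-path adjustments (the $\widetilde{\phantom{G}}$ construction of \cref{eqn: tilde_path}) and the homotopy-associativity needed to commute $\rho_N$ past concatenations all have to be handled carefully. A secondary point is verifying that $\iota_{\Q}$ is literally the map on rationalizations induced by $\iota$ and is a homotopy equivalence precisely because $B$ is rationally $K$-stable; this is where the hypothesis enters and must be stated cleanly.
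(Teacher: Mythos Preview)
Your approach is essentially the paper's: rationalize $H$ via \cref{lem:rational_homotopy}(1) to get $G$, use rational $K$-stability to obtain a homotopy inverse $q$ of $\iota_{\Q}$, use \cref{lem:equivalence} to transport $q\circ G$ to a path $G''$ in $C_\ast(S^j,\U_0(B)_\Q)$ ending at $r^B\circ f$ with $\iota_\Q\circ G''\sim_h G$, descend via \cref{lem:rational_homotopy}(2), and finally kill the residual torsion by passing to a multiple.

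There is one genuine slip in your homotopy chain. You write $\rho^{M_2(B)}_N(G)\sim_h \rho^{M_2(B)}_N(r^{M_2(B)}\circ H)$, but what step~1 actually gives is $\rho^{M_2(B)}_n(G)\sim_h r^{M_2(B)}\circ H$: the path $G$ is an ``$n$-th root'' of $r^{M_2(B)}\circ H$, not path-homotopic to it, so taking $N$-th powers of both does not match them. Concretely, the $N$ produced by \cref{lem:rational_homotopy}(2) need not be a multiple of $n$, and even when it is, say $N=kn$, one gets $\rho^{M_2(B)}_N(G)\sim_h r^{M_2(B)}\circ\mu^{M_2(B)}_k(H)$, not $r^{M_2(B)}\circ\mu^{M_2(B)}_N(H)$. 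The fix, which is exactly what the paper does, is to set $k=\mathrm{lcm}(n,N)$, write $k=\ell_1 n=\ell_2 N$, and compare $\iota\circ\mu^B_{\ell_2}(H')$ with $\mu^{M_2(B)}_{\ell_1}(H)$; after this adjustment your final torsion-killing step (passing to a further multiple $P$) goes through and yields $\iota\circ\mu^B_{P\ell_2}(H')\sim_h\mu^{M_2(B)}_{P\ell_1}(H)$. Your later ``replace $N$ by a further multiple'' does not repair this on its own, because the mismatch is in the exponent of $H$ versus $H'$, not merely in the size of $N$.
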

\begin{proof}
Since $B$ is rationally $K$-stable, there are maps $\U_0(B)_{\Q}\to \U_0(M_2(B))_{\Q}$ and $\U_0(M_2(B))_{\Q}\to \U_0(B)_{\Q}$ which are homotopy inverses of each other. Therefore, we get a commuting diagram
\[
\xymatrix{
C_{\ast}(S^j,\U_0(B))\ar[r]^{\iota}\ar[d]_{r} & C_{\ast}(S^j,\U_0(M_2(B))\ar[d]^{R} \\
C_{\ast}(S^j,\U_0(B)_{\Q})\ar[r]^{i} & C_{\ast}(S^j,\U_0(M_2(B))_{\Q}) \\
}
\]
where $r$ and $R$ represent the rationalization maps. Furthermore, $i$ has a homotopy inverse $q:C_{\ast}(S^j,\U_0(M_2(B))_{\Q})\to C_{\ast}(S^j,\U_0(B)_{\Q})$. Let $H:[0,1]\to C_{\ast}(S^j,\U_0(M_2(B)))$ as above. Since $R$ is a rationalization map, applying \cref{lem:rational_homotopy}, we get a homotopy $G:[0,1]\to C_{\ast}(S^j,\U_0(M_2(B))_{\Q})$ such that
\[
G(0) = e^2_{\Q}, \text{ and } G(1) = R\circ \iota\circ f = \iota\circ r\circ f
\]
Furthermore, $\rho^{M_2(B)}_n(G)$ is path homotopic to $R\circ H$ in $C_{\ast}(S^j,\U_0(M_2(B))_{\Q})$. Now, $q\circ G:[0,1]\to C_{\ast}(S^j,\U_0(B)_{\Q})$ is such that
\[
q\circ G(0) = e_{\Q}, \text{ and } q\circ G(1)  = q\circ i\circ r\circ f
\]
Note that $i$ and $q$ are homotopy equivalences, hence $G$ and $i\circ q\circ G$ are homotopic in $C_{\ast}(S^j,\U_0(M_2(B))_{\Q})$ say by $K:[0,1]\times [0,1]\to C_{\ast}(S^j,\U_0(M_2(B))_{\Q})$ satisfying
\[
K(s,0) = G(s), \quad K(s,1) = i\circ q\circ G(s), \quad K(0,t) = e^2_{\Q}
\]
Define $T:[0,1]\to C_{\ast}(S^j,\U_0(M_2(B))_{\Q})$ as $T(t) = K(1,1-t)$. Then
\[
T(0) = i\circ q\circ i\circ r\circ f, \quad T(1) = i\circ r\circ f
\]
Thus, by \cref{lem:equivalence}, there is a homotopy $S:[0,1]\to C_{\ast}(S^j,\U_0(B)_{\Q})$ such that
\[
S(0) = q\circ i\circ r\circ f, \quad S(1) = r\circ f
\]
and $i\circ S$ is path homotopic to $T$ in $C_{\ast}(S^j,\U_0(M_2(B))_{\Q})$. Since $(i \circ q\circ G)\bullet T$ is path homotopic to $G$, this implies $(i\circ q\circ G)\bullet (i\circ S)$ is path homotopic to $G$ in $C_{\ast}(S^j,\U_0(M_2(B))_{\Q})$. Thus, we get a path $(q\circ G)\bullet S:[0,1]\to C_{\ast}(S^j,\U_0(B))_{\Q})$ so that 
\[
(q\circ G)\bullet S(0)= e_{\Q},\quad q\circ G\bullet S(1)=r\circ f,\quad i\circ( q\circ G\bullet S)\sim_h G
\]
Again, since $r$ is a rationalization map, by \cref{lem:rational_homotopy}, there exists a natural number $m\in\N$ and a path $H':[0,1]\to C_{\ast}(S^j,\U_0(B))$ such that 
\[
H'(0)=0,\quad H'(1)= \mu^B_m(f), \quad r\circ H'\sim_h \rho^B_m((q\circ G)\bullet S).
\]
Take $k=\text{lcm}\{n,m\}$, and write $k = n\ell_1=m\ell_2$ for some $\ell_1, \ell_2\in\N$. Then $ \mu^B_{l_2}(H'):[0,1]\to C_{\ast}(S^j,\U_0(B))$ is such that
\[
\mu^B_{\ell_2}(H')(0)=0, \mu^B_{\ell_2}(H')(1)= \mu^B_k(f), \text{ and } r\circ\mu^B_{\ell_2}( H')\sim_h \rho^B_k((q\circ G)\bullet S).
\]
Also $ \mu^{M_2(B)}_{\ell_1}(H) :[0,1]\to C_{\ast}(S^j,\U_0(M_2(B))$ is such that
\[
\mu^{M_2(B)}_{\ell_1}(H)(0)=0, \mu^{M_2(B)}_{\ell_1}( H)(1)=\iota\circ \mu^B_k(f), \text{ and } R\circ \mu^{M_2(B)}_{\ell_1}(H)\sim_h \rho^{M_2(B)}_k(G).
\]
Then, from the earlier arguments, we have the following relations
\begin{equation*}
\begin{split}
 \rho^{M_2(B)}_{k}(G) &\sim_h R\circ\mu^{M_2(B)}_{\ell_1}(H), \\
i\circ((q\circ G)\bullet S) &\sim_h G, \text{ and } \\
r\circ\mu^B_{\ell_2}( H') &\sim_h \rho^B_k((q\circ G)\bullet S)
\end{split}
\end{equation*}
Also $i\circ r\circ H'=R\circ \iota\circ H'$. Hence 
\begin{equation*}
\begin{split}
R\circ \iota\circ\mu^B_{\ell_2}(H') &= i\circ r\circ \mu^B_{\ell_2}(H') \sim_h  \rho^{M_2(B)}_k(i\circ((q\circ G)\bullet S)) \\
&\sim_h  \rho^{M_2(B)}_k(G)\sim_h R\circ\mu^{M_2(B)}_{\ell_1}(H)
\end{split}
\end{equation*}
Thus 
\[
\left[R\circ\left(\iota\circ \mu^B_{\ell_2}( H')\bullet \overline{\mu^{M_2(B)}_{\ell_1}( H)}\right)\right]=0
\]
in $\pi_1(C_{\ast}(S^j,\U_0(M_2(B))_{\Q})$. Then, by \cref{lem:rational_homotopy}, there exists a natural number $P\in\N$ such that
\[
\iota\circ\mu^{B}_{P\ell_2}(H')=\mu^{M_2(B)}_P(\iota\circ \mu^B_{\ell_2}( H'))\sim_h \mu^{M_2(B)}_P( \mu^{M_2(B)}_{\ell_1}(H))=\mu^{M_2(B)}_{P\ell_1}(H)
\]
in $C_{\ast}(S^1,\U_0(M_2(B)))$. Thus, replacing $H'$ by $ \mu^B_{P \ell_2}(H')$ and taking $N:=P \ell_1$, we have
\[
\iota\circ H'\sim_h  \mu^{M_2(B)}_N(H).\qedhere
\]
\end{proof}
 
The next lemma is an analogue of \cite[Lemma 2.7]{vaidyanathan}, and is a consequence of that result and \cref{prop:homotopy}.

\begin{lem}\label{lem:local_path}
Given $X$ a compact Hausdorff space, $A$ a continuous $C(X)$-algebra, and $x\in X$ such that $A(x)$ is rationally $K$-stable. For $[f]\in \pi_j(\U_0(A))$, let $F : [0,1]\to C_*(S^j,\,\U_0(M_2(A))\,)$ be a path and $n\in\N$ such that
\[
F(0) = 0 \text{ and } F(1) = \mu^{M_2(A)}_n(\iota\circ f).
\]
Then, there is a closed neighbourhood $Y$ of $x$, a natural number $N_x\in\N$ and a path $L_Y : [0,1]\to C_*(S^j,\,\U_0(A(Y))\,)$ such that $L_Y(0) = 0, L_Y(1) = \mu^{A(Y)}_{N_xn}(\pi_Y\circ f)$, and
\[
\iota_Y\circ L_Y\sim_h \mu^{M_2(A(Y))}_{N_x}(\eta_Y\circ F)
\]
in $C_*(S^j,\,\U_0(M_2(A(Y))))$.
\end{lem}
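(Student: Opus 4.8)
The plan is to transfer everything to the fibre $A(x)$ by means of \cref{prop:homotopy}, and then to spread the resulting fibrewise data out to a closed neighbourhood of $x$ by the argument underlying \cite[Lemma 2.7]{vaidyanathan}. First I would apply $\eta_x$ to $F$. Since $\eta_x$ is a $\ast$-homomorphism and $\eta_x\circ\iota = \iota_x\circ\pi_x$, the path $\eta_x\circ F$ runs in $C_{\ast}(S^j,\U_0(M_2(A(x))))$ from $0$ to $\mu^{M_2(A(x))}_n(\iota_x\circ(\pi_x\circ f))$, so $[\iota_x\circ(\pi_x\circ f)]$ has finite order dividing $n$ in $\pi_j(\U_0(M_2(A(x))))$. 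As $A(x)$ is rationally $K$-stable, \cref{prop:homotopy} applies with $B = A(x)$, with $\pi_x\circ f$ in the role of $f$, and with $H = \eta_x\circ F$; its proof uses only the existence of the path $H$, so the fact that $n$ may fail to be the exact order of $[\iota_x\circ(\pi_x\circ f)]$ is harmless. It yields $N_x\in\N$ and a path $\widetilde{H}:[0,1]\to C_{\ast}(S^j,\U_0(A(x)))$ with $\mu^{M_2(A(x))}_{N_x}(\eta_x\circ F)\sim_h\iota_x\circ\widetilde{H}$ in $C_{\ast}(S^j,\U_0(M_2(A(x))))$; since $\sim_h$ preserves endpoints and $\iota_x$ is injective, this forces $\widetilde{H}(0) = 0$ and $\widetilde{H}(1) = \mu^{A(x)}_{N_x n}(\pi_x\circ f)$.

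Next I would rephrase this so that it matches the neighbourhood argument of \cite[Lemma 2.7]{vaidyanathan}. Set $g := \mu^A_{N_x n}(f)\in C_{\ast}(S^j,\U_0(A))$ and $F_0 := \mu^{M_2(A)}_{N_x}(F):[0,1]\to C_{\ast}(S^j,\U_0(M_2(A)))$. Because $\iota$, $\eta_x$, $\eta_Y$ and the restriction maps are $\ast$-homomorphisms, $F_0$ is a path from $0$ to $\iota\circ g$, we have $\pi_x\circ g = \widetilde{H}(1)$, and $\eta_x\circ F_0 = \mu^{M_2(A(x))}_{N_x}(\eta_x\circ F)\sim_h\iota_x\circ\widetilde{H}$. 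This is exactly the fibrewise input required by \cite[Lemma 2.7]{vaidyanathan} for the element $g$, the path $F_0$ and the fibre path $\widetilde{H}$: one lifts $\widetilde{H}$ through the Serre fibration $\pi_x:\U_0(C_{\ast}(S^j,A))\to\U_0(C_{\ast}(S^j,A(x)))$ of \cref{thm:fibration} to a path starting at $0$, uses that $C_{\ast}(S^j,A)$ is again a continuous $C(X)$-algebra (\cref{thm:kw_tensor}, since $C_0(S^j\setminus\{\ast\})$ is nuclear) together with \cref{lem:quasi_homotopy} to correct that path's endpoint to $\pi_Y\circ g$ over a sufficiently small closed neighbourhood of $x$, and then spreads the homotopy $\iota_x\circ\widetilde{H}\sim_h\eta_x\circ F_0$ out over a further closed neighbourhood using \cref{lem:quasi_homotopy,lem:quasi_unitary_close}, just as in the proof of \cref{thm:dim_zero_case_}. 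Intersecting the finitely many closed neighbourhoods so produced gives a closed neighbourhood $Y$ of $x$ and a path $L_Y:[0,1]\to C_{\ast}(S^j,\U_0(A(Y)))$ with $L_Y(0) = 0$, $L_Y(1) = \pi_Y\circ g = \mu^{A(Y)}_{N_x n}(\pi_Y\circ f)$ and $\iota_Y\circ L_Y\sim_h\eta_Y\circ F_0 = \mu^{M_2(A(Y))}_{N_x}(\eta_Y\circ F)$, which is the assertion.

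I expect the only real difficulty to lie in that last spreading-out: one must arrange that $L_Y$ terminates exactly at $\mu^{A(Y)}_{N_x n}(\pi_Y\circ f)$ — the restriction to $Y$ of the global element $g$ — while simultaneously keeping the homotopy $\iota_Y\circ L_Y\sim_h\mu^{M_2(A(Y))}_{N_x}(\eta_Y\circ F)$ valid over that same $Y$. In particular the endpoint-correcting path should be taken via the standard functional-calculus construction underlying \cref{lem:quasi_homotopy}, so that it restricts to the constant path on the fibre $x$ and the fibrewise compatibility homotopy is genuinely carried along; getting the two corrections onto a common neighbourhood is precisely what \cite[Lemma 2.7]{vaidyanathan} does. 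Modulo invoking that lemma, the substance of the present proof is the reduction carried out in the first two paragraphs: replacing $(f,F)$ by $(g,F_0) = (\mu^A_{N_x n}(f),\,\mu^{M_2(A)}_{N_x}(F))$ so that the output $\widetilde{H}$ of \cref{prop:homotopy} slots into that lemma's hypotheses.
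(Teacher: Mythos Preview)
Your proposal is correct and follows precisely the strategy indicated in the paper, which simply states that the lemma is a consequence of \cite[Lemma 2.7]{vaidyanathan} together with \cref{prop:homotopy}. Your write-up in fact supplies the details the paper omits: the reduction via \cref{prop:homotopy} at the fibre $A(x)$, the passage from $(f,F)$ to $(g,F_0)=(\mu^A_{N_xn}(f),\mu^{M_2(A)}_{N_x}(F))$, and the observation that the hypothesis ``order $n$'' in \cref{prop:homotopy} is used only through the existence of the path $H$, so a divisor of the order suffices.
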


\begin{rem}\label{rem:reorganize_cover}
We are now in a position to prove \cref{main_thm}, but first, we need one important fact, which allows us to use induction: If $X$ is a finite dimensional compact metric space, then covering dimension agrees with the small inductive dimension \cite[Theorem 1.7.7]{engelking}. Therefore, by \cite[Theorem 1.1.6]{engelking}, $X$ has an open cover $\mathcal{B}$ such that, for each $U \in \mathcal{B}$,
\[
\dim(\partial U)\leq \dim(X) - 1.
\]
Now suppose $\{U_1,U_2,\ldots, U_m\}$ is an open cover of $X$ such that $\dim(\partial U_i)\leq \dim(X)-1$ for $1\leq i\leq m$, we define sets $\{V_i : 1\leq i\leq m\}$ inductively by
\[
V_1 := \overline{U_1}, \text{ and } V_k := \overline{U_k\setminus \left( \bigcup_{i<k} U_i\right)} \text{ for } k>1
\]
and subsets $\{W_j : 1\leq j\leq m-1\}$ by
\[
W_j := \left(\bigcup_{i=1}^j V_i\right)\cap V_{j+1}.
\]
It is easy to see that $W_j \subset \bigcup_{i=1}^j \partial U_i$, so by \cite[Theorem 1.5.3]{engelking}, $\dim(W_j) \leq \dim(X)-1$ for all $1\leq j\leq m-1$.
\end{rem}

\begin{proof}[Proof of \cref{main_thm}]
Let $A$ be a continuous $C(X)$-algebra such that each fiber of $A$ is rationally $K$-stable. By \cref{thm:dim_zero_case_}, we assume that $\dim(X)\geq 1$, and we assume that $A(Y)$ is rationally $K$-stable for any closed subset $Y\subset X$ with $\dim(Y) \leq \dim(X)-1$. We now show that the map
\[
\iota_{\ast}\otimes \text{id}:\pi_j(\U_0(M_n(A))\,)\otimes\Q\to\pi_j(\U_0(M_{n+1}(A))\,)\otimes \Q 
\]
is an isomorphism for $j\geq 1$, $n\geq 1$. For simplicity of notation, we assume that $n=1$.\\

We first prove injectivity. Fix $[f]\in\pi_j(\U_0(A))$ such that $[\iota\circ f]$ has order $n$ in $\pi_j(\U_0(M_2(A)))$, then we wish to prove that $[f]$ has finite order in $\pi_j(\U_0(A))$. For this consider $F:[0,1]\to C_{\ast}(S^j,\U_0(M_2(A)))$ such that 
\[
F(0)=0,\quad F(1)=\mu^{M_2(A)}_n(\iota\circ f).
\]
For $x\in X$, by \cref{lem:local_path}, there is a closed neighbourhood $Y_x$ of $x$ , $N_x\in\N$ and a path $L_{Y_x} : [0,1]\to C_{\ast}(S^j,\U_0(A(Y_x)))$ such that
\[
L_{Y_x}(0) = 0, \quad L_{Y_x}(1) = \mu^{A(Y_x)}_{N_xn}(\pi_{Y_x}\circ f)
\]
and $\iota_{Y_x}\circ L_{Y_x}\sim_h\mu^{M_2(A(Y_x))}_{N_x}(\eta_{Y_x}\circ F)$ in $C_{\ast}(S^j,\U_0(M_2(A(Y_x))))$. We may choose $Y_x$ to be the closure of a basic open set $U_x$ such that $\dim(\partial U_x)\leq \dim(X)-1$. Since $X$ is compact, we may choose a finite subcover $\{U_1,U_2,\ldots, U_m\}$. Now define $\{V_1,V_2,\ldots, V_m\}$ and $\{W_1,W_2,\ldots, W_{m-1}\}$ as in \cref{rem:reorganize_cover}. We observe that each $V_i$ is a closed set such that $\mu^{A(V_i)}_{N_in}(\pi_{V_i}\circ f)\sim 0$ in $C_{\ast}(S^j,\U_0(A(V_i)))$ since $V_i \subset \overline{U_i}$ for all $1\leq i\leq m$. \\

Note that $W_1 = V_1\cap V_2$, and $\dim(W_1)\leq \dim(X) - 1$. By induction hypothesis, $A(W_1)$ is rationally $K$-stable. Let $H_i:[0,1]\to C_{\ast}(S^j,\U_0(A(V_i)))$, $i=1,2$ be paths such that $H_i(0) = 0, H_i(1) = \mu^{A(V_i)}_{N_in}(\pi_{V_i}\circ f)$, and 
\[
\iota_{V_i}\circ H_i \sim_h \mu^{M_2(A(V_i))}_{N_i}(\eta_{V_i}\circ F).
\]
Setting $M := \text{lcm}(N_1,N_2)$, we may assume that $H_i:[0,1]\to C_{\ast}(S^j,\U_0(A(V_i)))$, $i=1,2$ are paths such that $H_i(0) = 0, H_i(1) =\mu^{A(V_i)}_{Mn}(\pi_{V_i}\circ f)$, and
\[
\iota_{V_i}\circ H_i \sim_h \mu^{M_2(A(V_i))}_M(\eta_{V_i}\circ F).
\]
Let $S:[0,1]\to C_{\ast}(S^j,\U_0(A(W_1)))$ be the path
\[
S := (\pi^{V_1}_{W_1}\circ H_1)\bullet \overline{(\pi^{V_2}_{W_1}\circ H_2)}.
\]
Note that $S(0) = S(1) = 0$, so $S$ is a loop in $C_{\ast}(S^j,\U_0(A(W_1)))$, and 
\begin{eqnarray*}
\iota_{W_1}\circ S &=& (\eta^{V_1}_{W_1}\circ \iota_{V_1}\circ H_1)\bullet (\eta^{V_2}_{W_1}\circ \iota_{V_2}\circ \overline{H_2})\\ &\sim_h & \mu^{M_2(A(W_1))}_M(\eta_{W_1}\circ F\bullet \overline{(\eta_{W_1}\circ F})) \sim_h 0.
\end{eqnarray*}
Also, since $A(W_1)$ is rationally $K$-stable \[\iota_{W_1}^{\ast}\otimes \text{id}:\pi_1(C_{\ast}(S^j,\U_0(A(W_1))))\otimes\Q\to \pi_1(C_{\ast}(S^j,\U_0(M_2(A(W_1)))\,)\otimes\Q\] is an isomorphism. Hence, there exists $m\in\N$ such that $m[S]=0$ in $\pi_1(C_{\ast}(S^j,\U_0(A(W_1))))$. Thus 
\[
\pi^{V_1}_{W_1}\circ\mu^{A(V_1)}_m(H_1)=\mu^{A(W_1)}_m(\pi^{V_1}_{W_1}\circ H_1)\sim_h \mu^{A(W_1)}_m(\pi^{V_2}_{W_1}\circ H_2)=\pi^{V_2}_{W_1}\circ\mu^{A(V_2)}_m(H_2)
\] in $C_{\ast}(S^j,\U_0(A(W_1)))$. Now, by \cref{lem:cx_algebra_pullback}, and \cite[Theorem 3.9]{pedersen}, we have a pullback diagram
\[
\xymatrixcolsep{1.5cm}\xymatrix{
C_{\ast}(S^j, A(V_1\cup V_2))\ar[r]^{\pi^{V_1\cup V_2}_{V_1}}\ar[d]_{\pi^{V_1\cup V_2}_{V_2}} & C_{\ast}(S^j,A(V_1))\ar[d]^{\pi^{V_1}_{W_1}} \\
C_{\ast}(S^j,A(V_2))\ar[r]^{\pi^{V_2}_{W_1}} & C_{\ast}(S^j,A(W_1)).
}
\]
As mentioned before, this induces a pullback diagram of groups of quasi-unitaries. Furthermore, the map $\pi^{V_1}_{W_1}: \U_0(A(V_1)) \to \U_0(A(W_1))$ is a Serre fibration. Thus, by \cref{hom_pullback},
\[
\mu^{A(V_1\cup V_2)}_{mMn}(\pi_{V_1\cup V_2}\circ f)\sim_h 0
\]
in $C_*(S^j,\U_0(A(V_1\cup V_2)))$. Thus, $mMn[\pi_{V_1\cup V_2}\circ f]=0$, so that $[\pi_{V_1\cup V_2}\circ f]$ has finite order in $\pi_j(\U_0(A(V_1\cup V_2)))$. \\

Now observe that $W_2 = (V_1\cup V_2)\cap V_3$, and $\dim(W_2) \leq \dim(X)-1$. Replacing $V_1$ by $V_1\cup V_2$ and $V_2$ by $V_3$ in the above argument, we may repeat the earlier procedure. By induction on the number of elements in the finite subcover, we conclude that $[f]$ has finite order in $\pi_j(\U_0(A))$, as required.\\

We now prove surjectivity of $\iota_{\ast}\otimes \text{id}$. Choose $[u] \in \pi_j(\U_0(M_2(A)))$ and $m\in \Z$ non-zero. We wish to construct an element $[\omega]\in\pi_j(\U_0(A))$ and $q\in \Q$ such that
\[
\iota_{\ast}\otimes \text{id}\left([\omega]\otimes q\right)=[u]\otimes\frac{1}{m}.
\]
So, fix $x\in X$. Then by rationally $K$-stability of $A(x)$ (as in the proof of \cref{thm:dim_zero_case_}), there is a closed neighbourhood $Y_x$ of $x$, a natural number $L_x\in\N$, and a quasi-unitary $c_x \in C_{\ast}(S^j,\U_0(A(Y_x))$ such that
\[
\mu^{M_2(A(Y_x))}_{L_x}(\eta_{Y_x}\circ u) \sim_h \iota_{Y_x}\circ c_x.
\]
As in the first part of the proof, we may reduce to the case where $X = V_1\cup V_2$, and there are quasi-unitaries $c_{V_1} \in C_{\ast}(S^j,\U_0(A(V_1))), c_{V_2} \in C_{\ast}(S^j,\U_0(A(V_2)))$ such that
\[
\mu^{M_2(A(V_i))}_{L_i}(\eta_{V_i}\circ u) \sim \iota_{V_i}\circ c_{V_i} \text{ in } C_{\ast}(S^j,\U_0(M_2(A(V_i))))\quad i=1,2
\]
and if $W := V_1\cap V_2$, then $\dim(W)\leq \dim(X)-1$. Furthermore, by replacing the $\{L_i\}$ by their least common multiple, we may assume that $L_1 = L_2 =: L$. Now, fix paths $H_i : [0,1]\to C_{\ast}(S^j,\U_0(M_2(A(V_i))))$ such that
\begin{equation*}
\begin{split}
H_1(0) = \iota_{V_1}\circ c_{V_1},\qquad & H_1(1) = \mu^{M_2(A(V_1))}_L(\eta_{V_1}\circ u)\\
H_2(0)=\mu^{M_2(A(V_2))}_L(\eta_{V_2}\circ u), \qquad & H_2(1)=\iota_{V_2}\circ c_{V_2}.
\end{split}
\end{equation*}
Consider the path $F :[0,1]\to C_{\ast}(S^j,\U_0(M_2(A(W))))$ given by
\[
F := (\eta^{V_1}_W\circ H_1) \bullet (\eta^{V_2}_W\circ H_2).
\]
Then $F(0) = \iota_W\circ\pi^{V_1}_W\circ c_{V_1}$ and $F(1) =\iota_W\circ\pi^{V_2}_W\circ c_{V_2}$. Then since $A(W)$ is rationally $K$-stable, by \cref{prop:homotopy}, there exists a path $F':[0,1] \to C_{\ast}(S^j,\U_0(A(W)))$ and a natural number $N\in \N$ such that
\[
F'(0)=\mu^{A(W)}_N(\pi^{V_1}_W\circ c_{V_1}),\quad F'(1)=\mu^{A(W)}_N(\pi^{V_2}_W\circ c_{V_2})
\]
and $\iota_W\circ F'$ is path homotopic to $\mu^{M_2(A(W))}_N (F)$ in $C_{\ast}(S^j,\U_0(M_2(A(W))))$. The map $\pi^{V_2}_W : C_{\ast}(S^j,\U_0(A(V_2))) \to \pi^{V_2}_W(C_{\ast}(S^j,\U_0(A(V_2))))$ is a fibration, so there is a path $F'' : [0,1]\to C_{\ast}(S^j,\U_0(A(V_2)))$ such that
\[
F''(1) =\mu^{A(V_2)}_N(c_{V_2}), \text{ and } \pi^{V_2}_W\circ F'' = F'.
\]
Define $e_{V_2} := F''(0)$ so that
\[
\pi^{V_2}_W\circ e_{V_2} = \mu^{A(W)}_N(\pi^{V_1}_W\circ c_{V_1}).
\]
Recall that, given a path $G$ in a topological space, the path $\widetilde{G}$ is defined by \cref{eqn: tilde_path}. Define $H_3:[0,1]\to C_*(S^j,\U_0(M_2(V_2)))$ as
\[
H_3:=\mu^{M_2(A(V_2))}_N(H_2)\bullet \widetilde{(\iota_{V_2}\circ \overline{F''})}.
\]
Then, $H_3(0)=\mu^{M_2(A(V_2))}_{NL}(\eta_{V_2}\circ u), H_3(1)=\iota_{V_2}\circ e_{V_2}$, and
\[
\eta^{V_2}_W\circ H_3=\eta^{V_2}_W\circ(\mu^{M_2(A(V_2))}_N( H_2))\bullet\widetilde{( \iota_W\circ \overline{F'})}.
\]
Also $\eta^{V_1}_W : C_{\ast}(S^j,\U_0(M_2(A(V_1)))) \to \eta^{V_1}_W(C_{\ast}(S^j,\U_0M_2((A(V_1))))$ is a fibration, thus $\eta^{V_2}_W\circ(\mu^{M_2(A(V_2))}_N( H_2))$ has a lift, denoted by $T:[0,1]\to C_*(S^j,\U_0(M_2(A(V_1))))$ so that 
\[
T(0)=\mu^{M_2(A(V_1))}_{NL}(\eta_{V_1}\circ u).
\]
Then, letting $G:=\mu^{M_{2(A(V_1))}}_N(H_1)\bullet T$ gives $\eta^{V_1}_W\circ G=\mu^{M_2(A(W))}_N(F)$. Again by the above fibration map, since  $\eta^{V_1}_W\circ G=\mu^{M_2(A(W))}_N(F)\sim_h \iota_W\circ F'$, by calculation done in \cref{hom_pullback}, $\widetilde{\iota_W\circ F'}$ has a lift in $C_*(S^j,\U_0(M_2(V_1)))$, denoted by $T'$. Then
\[
\eta^{V_1}_W\circ(T\bullet \overline{T'})=\eta^{V_2}_W\circ(\mu^{M_2(A(V_2))}_N(H_2))\bullet\widetilde{(\iota_W\circ\overline{F'})}.
\]
As before, $C_{\ast}(S^j,A)$ is a pullback
\[
\xymatrixcolsep{1.5cm}\xymatrix{
C_{\ast}(S^j,A)\ar[r]^{\pi{_{V_1}}}\ar[d]_{\pi_{V_2}} & C_{\ast}(S^j, A(V_1))\ar[d]^{\pi^{V_1}_W} \\
C_{\ast}(S^j,A(V_2))\ar[r]^{\pi^{V_2}_W} & C_{\ast}(S^j,A(W))
}
\]
so that $\omega := (\mu^{A(V_1)}_N(c_{V_1}), e_{V_2})$ defines a quasi-unitary in $C_{\ast}(S^j,A)$, and $\iota\circ \omega\sim \mu^{M_2(A)}_{NL} (u)$ in $C_{\ast}(S^j,\U_0(M_2(A)))$, where the path is given by the pair $(H_3, T\bullet \overline{T'})$. Hence, for $q := 1/(mNL)$, we have
\[
\iota_{\ast}\otimes \text{id}([\omega]\otimes q) = [u]\otimes \frac{1}{m}
\]
as required.
\end{proof}

We conclude this section with a discussion on the extent to which the converse of \cref{main_thm} holds.

\begin{prop}\label{trivial_rational}
Let $X$ be a locally compact, Hausdorff space, and $A$ be a $C^*$-algebra. If $A$ is rationally $K$-stable, then so is $C_0(X)\otimes A$. The converse is true if $X$ is a finite CW-complex.
\end{prop}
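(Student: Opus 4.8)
The plan is to prove the two implications separately: the forward one rests on \cref{main_thm} together with the continuity of the homology theory $F_m$, and the converse is an elementary retraction argument. For the forward direction, assume $A$ is rationally $K$-stable. I would first reduce to the case that $X$ is compact. For a general locally compact Hausdorff $X$ with one-point compactification $X^+$, evaluation at $\infty$ gives a short exact sequence $0\to C_0(X)\otimes A\to C(X^+)\otimes A\to A\to 0$ that is split by the inclusion of $A$ as constant functions, and this is compatible with the matrix inclusions $\iota_j$ once we identify $M_k(C_0(X)\otimes A)=C_0(X)\otimes M_k(A)$, and similarly for $X^+$. Since $F_m$ is a homology theory (\cref{prop: continuous_homology}), the associated long exact sequence splits, so $F_m(C(X^+)\otimes M_k(A))$ decomposes naturally as $F_m(C_0(X)\otimes M_k(A))\oplus F_m(M_k(A))$, compatibly with the $\iota_j$. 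Hence, as $A$ is rationally $K$-stable, it suffices to prove that $C(X^+)\otimes A$ is rationally $K$-stable; that is, we may assume $X$ compact Hausdorff. Now embed $X$ into a cube $[0,1]^{\Lambda}$, and for each finite $F\subset\Lambda$ let $X_F\subset[0,1]^F$ be the image of $X$ under the coordinate projection. Each $X_F$ is compact, metrizable, and finite-dimensional, $X=\varprojlim_F X_F$, so $C(X)\otimes A=\varinjlim_F\bigl(C(X_F)\otimes A\bigr)$ with connecting maps commuting with all the $\iota_j$. Since $C(X_F)\otimes A=C(X_F,A)$ is a continuous $C(X_F)$-algebra whose fibre at every point is $A$ (which is rationally $K$-stable), \cref{main_thm} shows $C(X_F)\otimes A$ is rationally $K$-stable. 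Continuity of $F_m$ then lets us pass to the direct limit over $F$ and conclude that $F_m(\iota_j)$ is an isomorphism for $C(X)\otimes A$, for all $m\ge 1$ and $j\ge 2$.

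For the converse, let $X$ be a finite CW complex; then $X$ is compact and nonempty, so fix $x_0\in X$. The inclusion $A\to C(X)\otimes A=C(X,A)$ as constant functions and the evaluation $C(X,A)\to A$ at $x_0$ are $\ast$-homomorphisms whose composite is $\mathrm{id}_A$, and both commute with the matrix inclusions $\iota_j$ (under $M_k(C(X,A))=C(X,M_k(A))$). Applying $F_m$ and chasing the resulting commuting diagram shows that if $F_m(\iota_j)$ is an isomorphism for $C(X,A)$ then it is one for $A$: injectivity because the map induced by the constant-function inclusion is split monic, and surjectivity by pulling a class of $F_m(M_j(A))$ up along that map, lifting it through the isomorphism $F_m(\iota_j)$ for $C(X,A)$, and pushing it down along the map induced by evaluation. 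Thus rational $K$-stability of $C_0(X)\otimes A=C(X)\otimes A$ implies that of $A$. (Only compactness and nonemptiness of $X$ are actually used here.)

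The hardest part will be the forward reduction: \cref{main_thm} itself requires $X$ to be compact, metric, \emph{and} finite-dimensional, so one must first realize $C(X)$, for an arbitrary compact Hausdorff $X$, as an inductive limit of algebras $C(X_i)$ with each $X_i$ simultaneously compact, metrizable, and finite-dimensional — the cube embedding achieves this — and then invoke continuity of $F_m$ to transport the stagewise isomorphisms to the limit. The split-exact-sequence reduction for non-compact $X$ and the diagram chase in the converse are routine by comparison.
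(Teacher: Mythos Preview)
Your proof is correct, but both halves take genuinely different routes from the paper's.

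For the forward direction, the paper does not invoke \cref{main_thm} at all. After the same reduction to compact $X$, it writes $X$ as an inverse limit of compact metric spaces (Mardešić) and then of finite CW-complexes (Freudenthal), and at the final stage applies the explicit natural isomorphism $F_j(C(X,A))\cong\bigoplus_{n\ge j}H^{n-j}(X;F_j(A))$ of Lupton et al.\ \cite{lupton}; naturality in $A$ transports the isomorphism $F_j(\iota)$ from $A$ to $C(X,A)$. Your approach---reducing instead to compact, metric, finite-dimensional $X_F$ via the Tychonoff-cube embedding and then applying \cref{main_thm} to the trivial $C(X_F)$-algebra $C(X_F,A)$---is equally valid and has the virtue of remaining internal to the paper, avoiding the external input from \cite{lupton}. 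The paper's route, in exchange, yields the explicit cohomological description of $F_j(C(X,A))$, which is of independent interest and is also what drives its proof of the converse.

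For the converse, your retraction argument via the constant inclusion and evaluation at a point is considerably simpler than the paper's, which again appeals to the Lupton formula, combines it with the split fibration $C_\ast(X,Y)\to C(X,Y)\to Y$, and finishes with the five lemma. As you correctly observe, your argument uses only that $X$ is nonempty and compact, so it in fact establishes a stronger converse than the one stated.
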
 
\begin{proof}
If $A$ is rationally $K$-stable, we wish to show that $C_0(X)\otimes A$ is rationally $K$-stable. By appealing to the five lemma (as in \cite[Lemma 2.1]{vaidyanathan}), we may assume that $X$ is compact. Now, $X$ is an inverse limit of compact metric spaces $(X_i)$ by \cite{mardesic}, so that $C(X)\otimes A \cong \lim C(X_i)\otimes A$. Since the functors $F_j$ are continuous (\cref{prop: continuous_homology}), we may assume that $X$ itself is a compact metric space. Any metric space can, in turn, be written as an inverse limit of finite CW-complexes \cite{freudenthal}. Therefore, we may further assume that $X$ is a finite CW-complex. In that case, by \cite[Theorem 4.20]{lupton}, one has
\begin{equation}\label{eqn: lupton}
F_j(C(X,A)) \cong \bigoplus_{n\geq j}H^{n-j}(X;F_j(A))
\end{equation}
where the isomorphism is natural. Since the map $\iota_{\ast} :F_j(M_{n-1}(A))\to F_j(M_n(A))$ is an isomorphism, it follows that $\iota_{\ast} : F_j(C(X,M_{n-1}(A)))\to F_j(C(X,M_n(A)))$ is an isomorphism as well. Hence, $C(X)\otimes A$ is rationally $K$-stable. \\

Now suppose $X$ is a finite CW-complex and $C(X)\otimes A$ is $K$-stable. Then,
\[
F_j(C(X,M_{n-1}(A)))\cong F_j(C(X,M_{n}(A)))
\]
and the isomorphism of \cref{eqn: lupton} is component-wise. This implies that
\[
H^{n-j}(X;F_j(M_{n-1}(A)))\cong H^{n-j}(X;F_j(M_n(A))),\quad \forall n\geq j 
\] 
For any connected $H$-space $Y$, as in \cref{ex: rat_k_stable_not_k_stable}, there is a fibration sequence $C_{\ast}(X,Y)\to C(X,Y)\to Y$, which induces a short exact sequence of rational homotopy groups
\begin{equation}\label{eqn: ses_rat_hom_h_space}
0 \to F_j(C_{\ast}(X,Y)) \to F_j(C(X,Y)) \to F_j(Y) \to 0
\end{equation}
Now, we take $Y=\U_0(M_k(A))$ and apply \cite[Theorem 4.20]{lupton} to get
\[
F_j(C_{\ast}(X,M_k(A))) \cong \bigoplus_{n\geq j}\widetilde{H}^{n-j}(X;F_j(M_k(A)))
\]
and the isomorphism is natural. Hence, we conclude that
\[
F_j(C_{\ast}(X,M_{n-1}(A)))\cong F_j(C_{\ast}(X,M_{n}(A)))
\]
as well. By \cref{eqn: ses_rat_hom_h_space} and the five lemma, we conclude that $A$ is rational $K$-stable.
\end{proof}

In \cite[Theorem B]{seth}, we proved that, for an AF-algebra, rational $K$-stability is equivalent to $K$-stability. Combining this fact with \cref{trivial_rational}, and \cite[Theorem A]{vaidyanathan}, we have 

\begin{cor}
Let $X$ be a finite CW-complex, and $A$ be an $AF$-algebra. Then, $C(X)\otimes A$ is $K$-stable if and only if $A$ is $K$-stable.
\end{cor}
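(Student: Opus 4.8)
The plan is to assemble three already-available ingredients: \cite[Theorem A]{vaidyanathan} (K-stability passes from the fibers of a continuous $C(X)$-algebra to the ambient algebra when $X$ is compact metrizable of finite covering dimension), the converse half of \cref{trivial_rational} (for $X$ a finite CW-complex, rational K-stability of $C(X)\otimes A$ forces rational K-stability of $A$), and \cite[Theorem B]{seth} (for AF-algebras, rational K-stability is equivalent to K-stability). Throughout, note that a finite CW-complex is compact, metrizable, and of finite covering dimension, and that an AF-algebra is nuclear, so by \cref{thm:kw_tensor} the algebra $C(X)\otimes A$ is a continuous $C(X)$-algebra whose fiber at every point is $A$.

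For the forward implication, suppose $A$ is K-stable. Since $C(X)\otimes A$ is a continuous $C(X)$-algebra with every fiber equal to the K-stable algebra $A$, and $X$ satisfies the hypotheses of \cite[Theorem A]{vaidyanathan}, we conclude that $C(X)\otimes A$ is K-stable. For the reverse implication, suppose $C(X)\otimes A$ is K-stable. Then it is a fortiori rationally K-stable (K-stability implies rational K-stability, as noted after \cref{defn:k_stable}). Applying the converse statement of \cref{trivial_rational}, valid because $X$ is a finite CW-complex, we obtain that $A$ is rationally K-stable. Finally, since $A$ is an AF-algebra, \cite[Theorem B]{seth} upgrades this to K-stability of $A$.

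I do not anticipate a genuine obstacle here, as the corollary is a packaging of cited theorems; the only points requiring a moment of care are the verification that $C(X)\otimes A$ genuinely is a continuous $C(X)$-algebra with fiber $A$ (which uses nuclearity of AF-algebras together with \cref{thm:kw_tensor}) and that the hypotheses ``compact metric of finite covering dimension'' and ``finite CW-complex'' of the two main cited results are both met by $X$.
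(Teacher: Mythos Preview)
Your proposal is correct and follows exactly the paper's own argument: the corollary is stated immediately after the sentence ``Combining this fact with \cref{trivial_rational}, and \cite[Theorem A]{vaidyanathan}, we have,'' and your proof unpacks precisely that combination (forward via \cite[Theorem A]{vaidyanathan}, backward via the converse half of \cref{trivial_rational} and \cite[Theorem B]{seth}). Your added remark verifying the continuous $C(X)$-algebra structure of $C(X)\otimes A$ is a welcome bit of care but not a departure from the paper's route.
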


The next example shows that the converse of \cref{main_thm} need not hold for arbitrary continuous $C(X)$-algebras
\begin{ex}
Let $D_1 := M_{2^{\infty}}$ denote the UHF algebra of type $2^{\infty}$, and let $D_2 := D_1\oplus M_2(\C)$. Consider the $C[0,1]$-algebra
\[
A :=  \{(f,g) \in C[0,1/2]\otimes D_1\oplus C[1/2,1]\otimes D_2 : \Phi(f)=g(1/2)\}
\]
where $\Phi:C[0,1/2]\otimes D_1\to D_2$ is given by $\Phi(f)=(f(1/2),0)$. Since $\Phi$ is injective, it follows that $A$ is a continuous $C[0,1]$-algebra. Note that $A$ may be described as a pullback
\[
\xymatrix{
A\ar[r]\ar[d] & C[\frac{1}{2},1]\otimes D_2\ar[d]^{\text{ev}} \\
C[0,\frac{1}{2}]\otimes D_1 \ar[r]_{\Phi} & D_2
}
\]
where $\text{ev}$ is the evaluation at $1/2$. The Mayer-Vietoris theorem \cite[Theorem 4.5]{schochet} for the functor $F_m$ gives a long exact sequence
\[
\ldots \to F_m(A) \to F_m(D_2)\oplus F_m(D_1)\xrightarrow{\text{ev}_{\ast}-\Phi_{\ast}} F_m(D_2)\to \ldots
\]
where $\text{ev}_{\ast} :F_m(D_2)\to F_m(D_2)$ is the identity map and $\Phi_{\ast} :F_m(D_1)\to F_m(D_2)$ is given as $\Phi_{\ast}(r)=(r,0)$, thus $(\text{ev}_{\ast}-\Phi_{\ast}):F_m(D_2)\oplus F_m(D_1)\to F_m(D_2)$ is given by 
\[
(\text{ev}_{\ast}-\Phi_{\ast})((a,b),c)=(a-c,b).
\]
Consider the case where $m$ is odd: By \cite[Lemma 3.2]{seth}, $F_{m-1}(D_i) = F_{m+1}(D_i) = 0$ for $i=1,2$. Hence, the above long exact sequence boils down to
\[
0 \to F_m(A) \to F_m(D_2)\oplus F_m(D_1)\xrightarrow{\text{ev}_{\ast}-\Phi_{\ast}} F_m(D_2)\to \ldots
\]
Thus, there is a natural isomorphism
\[
F_m(A) = \ker(\text{ev}_{\ast}-\Phi_{\ast}) \cong F_m(D_1)
\]
Similarly, $F_m(M_2(A)) \cong F_m(M_2(D_1))$ and the following diagram commutes
\[
\xymatrix{
F_m(A)\ar[r]^{\cong}\ar[d]_{\iota^A} & F_m(D_1)\ar[d]_{\iota^{D_1}} \\
F_m(M_2(A))\ar[r]^{\cong} & F_m(M_2(D_1)).
}
\]
Since $D_1$ is rationally $K$-stable by \cite[Theorem B]{seth}, it follows that $\iota^A$ is an isomorphism. Doing the same for the inclusion map $M_n(A)\hookrightarrow M_{n+1}(A)$, we conclude that the map $F_m(M_n(A)) \to F_m(M_{n+1}(A))$ is an isomorphism if $m$ is odd. \\

Now suppose $m$ is even: The above long exact sequence reduces to 
\[
F_{m-1}(A)\to F_{m-1}(D_2)\oplus F_{m-1}(D_1)\xrightarrow{\text{ev}_{\ast}-\Phi_{\ast}} F_{m-1}(D_2)\to F_m(A)\to 0
\]
so that $F_m(A)\cong \text{coker}(\text{ev}_{\ast}-\Phi_{\ast})$. Now, by \cite[Theorem A]{seth}, it follows that $F_{m-1}(D_1)\cong \Q$ for all even $m$, and
\[
F_{m-1}(D_2) \cong \begin{cases}
\Q\oplus \Q &: m=2,4 \\
\Q &: m>4 \text{ even}.
\end{cases}
\]
Thus, elementary linear algebra proves that $\text{ev}_{\ast}-\Phi_{\ast}$ is surjective, so that $F_m(A) = 0$. Similarly, $F_m(M_n(A)) = 0$ for all $n\geq 2$ as well (if $m$ is even). \\

Thus, we conclude that $A$ is rationally $K$-stable. However, one of its fibers (namely $D_2$) is not rationally $K$-stable because it has a non-zero finite dimensional representation \cite[Theorem B]{seth}.
\end{ex}

\section{An application to Crossed Product C*-algebras}\label{sec: rokhlin}

As an application of our earlier results, we wish to show that the class of (rationally) $K$-stable C*-algebras is closed under the formation of certain crossed products. To begin with, we fix some conventions. In what follows, $G$ will denote a compact, second countable group, and $A$ will denote a separable C*-algebra. By an action of $G$ of $A$, we mean a continuous group homomorphism $\alpha : G\to \text{Aut}(A)$, where $\text{Aut}(A)$ is equipped with the point-norm topology. We write $\sigma: G\to \text{Aut}(C(G))$ for the left action of $G$ on $C(G)$, given by $\sigma_s(f)(t) := f(s^{-1}t)$. \\

The notion of Rokhlin dimension was invented by Hirshberg, Winter and Zacharias \cite{hwz} for actions of finite groups (and the integers). The definition for compact, second countable groups is due to Gardella \cite{gardella_compact}. The `local' definition we give below is different from the original, but is equivalent due to \cite[Lemma 3.7]{gardella_compact} (See also \cite[Lemma 1.5]{prahlad_rokhlin}).

\begin{defn}\label{defn: rokhlin_dimension}
Let $G$ be a compact, second countable group, and let $A$ be a separable C*-algebra. We say that an action $\alpha:G\to \text{Aut}(A)$ has Rokhlin dimension $d$ (with commuting towers) if $d$ is the least integer such that, for any pair of finite sets $F\subset A, K\subset C(G)$, and any $\epsilon > 0$, there exist $(d+1)$ contractive, completely positive maps
\[
\psi_0, \psi_1, \ldots, \psi_d : C(G)\to A
\]
satisfying the following conditions:
\begin{enumerate}
\item For $f_1,f_2\in K$ such that $f_1\perp f_2$, $\|\psi_j(f_1)\psi_j(f_2)\| < \epsilon$ for all $0\leq j\leq d$.
\item For any $a\in F$ and $f\in K$, $\|[\psi_j(f), a]\| < \epsilon$ for all $0\leq j\leq d$.
\item For any $f\in K$ and $s\in G, \|\alpha_s(\psi_j(f)) - \psi_j(\sigma_s(f))\| < \epsilon$ for all $0\leq j\leq d$.
\item For any $a\in F, \|\sum_{j=0}^d \psi_j(1_{C(G)})a - a\| < \epsilon$.
\item For any $f_1,f_2 \in K$, $\|[\psi_j(f_1),\psi_k(f_2)]\| < \epsilon$ for all $0\leq j,k\leq d$.
\end{enumerate}
We denote the Rokhlin dimension (with commuting towers) of $\alpha$ by $\dim_{Rok}^c(\alpha)$. If no such integer exists, we say that $\alpha$ has infinite Rokhlin dimension (with commuting towers), and write $\dim_{Rok}^c(\alpha) = +\infty$. 
\end{defn}

We now describe the local approximation theorem due to Gardella, Hirshberg and Santiago \cite{gardella} that will help prove the permanence result we are interested in.

\begin{prop}\label{prop: target_field}\cite[Corollary 4.9]{gardella}
Let $G$ be a compact, second countable group, $X$ be a compact Hausdorff space and $A$ be a separable C*-algebra. Let $G\curvearrowright X$ be a continuous, free action of $G$ on $X$, and $\alpha:G\to \text{Aut}(A)$ be an action of $G$ on $A$. Equip the C*-algebra $C(X,A)$ with the diagonal action of $G$, denoted by $\gamma$. Then, the crossed product C*-algebra $C(X,A)\rtimes_{\gamma} G$ is a continuous $C(X/G)$-algebra, each of whose fibers are isomorphic to $A\otimes \mathcal{K}(L^2(G))$.
\end{prop}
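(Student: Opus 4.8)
The plan is to construct the $C(X/G)$-algebra structure by hand, identify the fibres via an exactness argument combined with an imprimitivity-type computation, and then verify continuity as a local statement on $X/G$. Write $B := C(X,A)\rtimes_\gamma G$ and let $q:X\to X/G$ be the quotient map; since $G$ is compact, $q$ is open and $X/G$ is compact Hausdorff. The defining homomorphism $\theta:C(X)\to Z(M(C(X,A)))$ satisfies $\gamma_s(\theta(f)d)=\theta(f\circ s^{-1})\gamma_s(d)$, so on the fixed subalgebra $C(X)^G\cong C(X/G)$ it takes values in central multipliers that commute with the $G$-action; these extend to central multipliers of $B$, and since $1\in C(X/G)$ this produces a unital map $C(X/G)\to Z(M(B))$. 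That is the desired structure.

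Next I would compute the fibre over $\bar x\in X/G$, writing $Gx=q^{-1}(\bar x)$. Using that $X/G$ is Hausdorff one checks that the closed ideal of $C(X)$ generated by $C_0(X/G,\{\bar x\})$ is exactly $C_0(X,Gx)$, so $C_0(X/G,\{\bar x\})\cdot B$ equals $C_0(X\setminus Gx,A)\rtimes_\gamma G$, the kernel of the natural surjection $B\to C(Gx,A)\rtimes_\gamma G$. Since $G$ is compact --- hence amenable, so full and reduced crossed products agree --- and full crossed products are exact, this gives $B(\bar x)\cong C(Gx,A)\rtimes_\gamma G$. Freeness makes the orbit map $G\to Gx$, $s\mapsto s\cdot x$, a $G$-equivariant homeomorphism (a continuous bijection from a compact space onto a Hausdorff one), so $C(Gx,A)\rtimes_\gamma G\cong C(G,A)\rtimes_\delta G$ with $\delta$ the diagonal action of left translation on $C(G)$ and $\alpha$ on $A$. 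The $*$-automorphism $\Phi$ of $C(G,A)$ given by $\Phi(f)(s):=\alpha_{s^{-1}}(f(s))$ intertwines $\delta$ with $\mathrm{lt}\otimes\mathrm{id}$, so
\[
C(G,A)\rtimes_\delta G\;\cong\;\bigl(C(G)\rtimes_{\mathrm{lt}}G\bigr)\otimes A\;\cong\;\mathcal{K}(L^2(G))\otimes A,
\]
the last isomorphism being the standard identification of the transformation-group algebra of $(G,G,\mathrm{lt})$ with the compacts on $L^2(G)$ (Green's imprimitivity theorem, or a direct computation of the regular representation). Hence each fibre is $A\otimes\mathcal{K}(L^2(G))$.

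It remains to show $B$ is a \emph{continuous} $C(X/G)$-algebra, i.e. that $\bar x\mapsto\|b(\bar x)\|$ is continuous for every $b\in B$; upper semicontinuity is automatic, so only lower semicontinuity is at issue, and by density it suffices to treat $b\in C(G,C(X,A))$. Here I would use induced representations: a point $x\in X$ together with a representation $\pi$ of $A$ determines a representation $\mathrm{Ind}_x^\pi$ of $B$ on $L^2(G)\otimes\mathcal{H}_\pi$ whose value on a convolution element is assembled from the evaluations $d\mapsto d(s\cdot x)$, and $\|b(\bar x)\|=\sup_\pi\|\mathrm{Ind}_x^\pi(b)\|$. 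Since each $d\in C(X,A)$ is a norm-continuous $A$-valued function and $b$ has compact support in $G$, the map $y\mapsto\|\mathrm{Ind}_y^\pi(b)\|$ is continuous on $X$; combined with openness of $q$ (to lift, after passing to a subnet, a net $\bar y_i\to\bar x$ to a net in $X$ converging to a point of $Gx$) this yields $\liminf_{\bar y\to\bar x}\|b(\bar y)\|\ge\|b(\bar x)\|$.

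The main obstacle is precisely this continuity step: arranging the induced representations so that their norms genuinely compute the fibre norm while depending continuously on the base point. When $G$ is a Lie group, which is the only case needed for \cref{main_thm: rokhlin}, there is a shorter route: freeness and compactness make $X\to X/G$ a locally trivial principal $G$-bundle (slice theorem), so over a trivializing open set $V\subseteq X/G$ the algebra $B$ is isomorphic, as a $C_0(V)$-algebra, to $C_0(V)\otimes\bigl(C(G,A)\rtimes G\bigr)\cong C_0\bigl(V,\,A\otimes\mathcal{K}(L^2(G))\bigr)$, which is manifestly a continuous field; since continuity of a $C(Y)$-algebra is a local condition on $Y$, this completes the argument.
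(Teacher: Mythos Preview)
The paper does not supply its own proof of this proposition: it is stated with an explicit citation to \cite[Corollary 4.9]{gardella} and is used as a black box. So there is no in-paper argument to compare against, and the relevant question is simply whether your sketch stands on its own.

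It does. The construction of the $C(X/G)$-structure via $C(X)^G\hookrightarrow Z(M(B))$ is the standard one; the fibre computation by exactness of crossed products by amenable (here compact) groups is correct; your untwisting automorphism $\Phi(f)(s)=\alpha_{s^{-1}}(f(s))$ genuinely intertwines the diagonal action with $\mathrm{lt}\otimes\mathrm{id}$, after which $C(G,A)\rtimes G\cong (C(G)\rtimes_{\mathrm{lt}}G)\otimes A\cong \mathcal{K}(L^2(G))\otimes A$ follows since $\mathcal{K}(L^2(G))$ is nuclear. The only part you correctly flag as delicate is lower semicontinuity of the norm function. Your induced-representation argument is the right idea and can be made rigorous, but as written it needs one more sentence: the family $\{\mathrm{Ind}_x^\pi\}_\pi$ is faithful on the fibre $B(\bar x)$ precisely because under the isomorphism $B(\bar x)\cong A\otimes\mathcal{K}(L^2(G))$ these are the representations $\pi\otimes\mathrm{id}$, which separate points. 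Your alternative via the slice theorem (local triviality of free compact-Lie-group actions) is cleaner and is all that is needed for the application to \cref{main_thm: rokhlin}; note that here you are also implicitly using exactness to identify $C_0(V)\cdot B$ with $C_0(q^{-1}(V),A)\rtimes_\gamma G$.
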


In the context of \cref{prop: target_field}, the natural inclusion map $\rho : A\to C(X,A)$ is a $G$-equivariant $\ast$-homomorphism. Hence, it induces a map $\rho : A\rtimes_{\alpha} G \to C(X,A)\rtimes_{\gamma} G$. To describe the nature of this map, we need the next definition, which is due to Barlak and Szabo \cite{barlak}. Once again, we choose to work with the local definition as it is more convenient for our purpose.

\begin{defn}\label{defn: seq_split}
Let $A$ and $B$ be separable C*-algebras. A $\ast$-homomorphism $\varphi: A\rightarrow B$ is said to be \emph{sequentially split} if, for every compact set $F\subset A$, and for every $\epsilon >0$, there exists a $\ast$-homomorphism $\psi = \psi_{F,\epsilon}: B\rightarrow A$ such that 
\[
\|\psi\circ \phi(a)-a\|<\epsilon
\]
for all $a\in F$.
\end{defn}

The next theorem, due to Gardella, Hirshberg and Santiago \cite[Proposition 4.11]{gardella} is an important structure theorem that allows one to prove permanence results concerning crossed products with finite Rokhlin dimension (with commuting towers).

\begin{theorem}\label{thm: seq_split_rokhlin_dimension}
Let $\alpha:G\to \text{Aut}(A)$ be an action of a compact, second countable group on a separable C*-algebra such that $\dim_{Rok}^c(\alpha) < \infty$. Then, there is exists a compact metric space $X$ and a free action $G\curvearrowright X$ such that the canonical embedding
\[
\rho : A\rtimes_{\alpha} G\to C(X,A)\rtimes_{\gamma} G
\]
is sequentially split. Furthermore, if $G$ finite dimensional, then $X$ may be chosen to be finite dimensional as well.
\end{theorem}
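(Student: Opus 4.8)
The plan is to exhibit the space $X$ explicitly and to construct the splitting maps by hand out of the local Rokhlin data of \cref{defn: rokhlin_dimension}. Write $d := \dim_{Rok}^c(\alpha) < \infty$, let $G$ act on itself by left translation, and set $X := G^{\ast(d+1)}$, the $(d+1)$-fold topological join of $G$ with itself, equipped with the diagonal $G$-action. A point of $X$ is a formal convex combination $\sum_{j=0}^{d} t_j\,g_j$ with $t_j\ge 0$, $\sum_j t_j = 1$ (and $g_j$ forgotten when $t_j = 0$); since some $t_j$ is always positive, the diagonal action is free, and $X$ is compact and metrizable. Moreover $\dim X = d + (d+1)\dim G$, which is finite precisely when $G$ is finite-dimensional, and this yields the last assertion. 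The point of this choice is that $C(X)$ is generated by the coordinate functions $t_j$ together with the functions $t_j\cdot(f\circ\mathrm{pr}_j)$ for $f\in C(G)$ --- equivalently, by $d+1$ mutually commuting copies of the cone $C_0((0,1])\otimes C(G)$ whose base projections $t_j$ sum to $1$ --- which is exactly the universal target that the maps $\psi_0,\dots,\psi_d$ of \cref{defn: rokhlin_dimension} are built to receive.

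Next I would identify $\rho$: the constant-function inclusion $A\hookrightarrow C(X,A)$ is unital and $G$-equivariant, hence descends to $\rho: A\rtimes_\alpha G\to C(X,A)\rtimes_\gamma G$, which is the map in the statement. To show $\rho$ is sequentially split, fix a compact set $F_0\subset A\rtimes_\alpha G$ and $\epsilon>0$; after a routine approximation it suffices to treat $F_0$ of the form $\{a u_s : a\in F,\ s\in G'\}$ for finite $F\subset A$ and $G'\subset G$, where $u_s$ are the canonical implementing elements. Choosing a sufficiently rich finite $K\subset C(G)$ and applying \cref{defn: rokhlin_dimension} produces c.p.c.\ maps $\psi_0,\dots,\psi_d:C(G)\to A$ satisfying (1)--(5). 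Conditions (1) and (5) say that each $\psi_j$ is approximately order zero and that their ranges approximately commute, so via the correspondence between c.p.c.\ order zero maps and $\ast$-homomorphisms out of cones, together with the normalisation (4) of $\sum_j\psi_j(1)$, the $\psi_j$ assemble into an approximate, $G$-equivariant (by (3)) $\ast$-homomorphism $\Psi:C(X)\to A$. Condition (2) makes the image of $\Psi$ approximately commute with $F$, so $c\otimes a\mapsto \Psi(c)a$ defines an approximate, approximately equivariant $\ast$-homomorphism $C(X,A)=C(X)\otimes A\to A$, which on passing to crossed products gives a c.p.c.\ map $\psi:C(X,A)\rtimes_\gamma G\to A\rtimes_\alpha G$ with $\|\psi\circ\rho(x)-x\|<\epsilon$ for $x\in F_0$.

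To turn these approximate statements into honest $\ast$-homomorphisms I would run the construction inside the central sequence algebra: the defining property of $\dim_{Rok}^c(\alpha)=d$ furnishes a genuine $G$-equivariant $\ast$-homomorphism $C(X)\to A_\infty\cap A'$, hence a genuine equivariant $\ast$-homomorphism $C(X,A)\to A_\infty$ (legitimate because the image of $C(X)$ now literally commutes with $A$), hence a genuine $\ast$-homomorphism $C(X,A)\rtimes_\gamma G\to (A\rtimes_\alpha G)_\infty$ that is a one-sided inverse to $\rho_\infty$. A reindexing/diagonalisation argument over the defining sequence of triples $(F,K,\epsilon)$ then extracts, for each $(F_0,\epsilon)$, an honest $\ast$-homomorphism $\psi_{F_0,\epsilon}:C(X,A)\rtimes_\gamma G\to A\rtimes_\alpha G$ with the estimate above, which is precisely the condition of \cref{defn: seq_split}.

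The main obstacle is the middle step: upgrading the five \emph{approximate} Rokhlin conditions to a \emph{genuine} $\ast$-homomorphism out of $C(X)$ whose image commutes with $A$. This is exactly what forces $X = G^{\ast(d+1)}$ on us --- it is the space whose function algebra is the universal recipient of $d+1$ commuting ``order-zero-plus-partition-of-unity'' copies of $C(G)$ --- and it is where the commuting-towers hypothesis (condition (5)) is indispensable: without it the order zero maps would only generate a noncommutative free-product-type algebra rather than $C(X)$, and the tractable target $C(X,A)\rtimes_\gamma G$ of \cref{prop: target_field} would have to be abandoned. Once the equivariant $\ast$-homomorphism $C(X)\to A_\infty\cap A'$ is in place, checking freeness of the diagonal action and computing $\dim X$ are routine, and the passage through sequence algebras is standard.
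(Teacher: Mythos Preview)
The paper does not give its own proof of this statement: it is quoted verbatim as \cite[Proposition 4.11]{gardella} and used as a black box. So there is no argument in the paper to compare your proposal against. That said, what you have written is essentially the argument of Gardella, Hirshberg and Santiago: the space $X=G^{\ast(d+1)}$ is exactly the one they use, the universal property you describe (that $C(X)$ receives $d+1$ commuting order-zero copies of $C(G)$ whose base points form a partition of unity) is the reason this join appears, and the upgrade from approximate to genuine $\ast$-homomorphisms via the central sequence algebra is the standard mechanism. Your dimension count $\dim X = d + (d+1)\dim G$ and the freeness check are correct.

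Two technical points your sketch elides, which are where the actual work in \cite{gardella} lies. First, when $A$ is non-unital the unital algebra $C(X)$ cannot map into $A_\infty\cap A'$; one has to work in a relative commutant inside a multiplier or corona-type construction (the algebra $F_{\infty,\alpha}(A)$ in \cite{gardella}), and the assembly of $\Psi(c)a$ into a $\ast$-homomorphism $C(X,A)\to A_\infty$ needs a little care. Second, for an infinite compact group the induced action of $G$ on $A_\infty$ is typically not point-norm continuous, so ``equivariant'' has to be interpreted on the continuous part $A_{\infty,\alpha}$; this is again handled in \cite{gardella} but is not automatic. Neither of these is a gap in your strategy, but they are the places where a reader would ask you to be precise.
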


In light of \cref{thm: seq_split_rokhlin_dimension}, we now show that the property of being rationally $K$-stable ($K$-stable) passes from the target algebra $B$ to the domain algebra $A$, in the presence of a sequentially split $\ast$-homomorphism. To this end, we fix the following notations, given $\ast$-homomorphism $\varphi:A\rightarrow B$, $\varphi_n:M_n(A)\rightarrow M_n(B)$ represents the inflation of $\varphi$, given by $\varphi_n((a_{i,j}))= (\varphi(a_{i,j}))$. Furthermore $\iota^B:B\rightarrow M_2(B)$ represents the canonical inclusion.

\begin{prop}\label{prop: sequentially_split}
Let $A$ and $B$ be separable C*-algebras, and $\varphi: A\rightarrow B$ be a sequentially split $\ast$-homomorphism. If $B$ is rationally $K$-stable ($K$-stable), then so is $A$.
\end{prop}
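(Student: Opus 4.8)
The plan is to exploit the approximate factorization through the identity that a sequentially split $\ast$-homomorphism provides. For a compact set $F\subset A$ and $\epsilon>0$ write $\psi_{F,\epsilon}:B\to A$ for a $\ast$-homomorphism with $\|\psi_{F,\epsilon}\circ\varphi(a)-a\|<\epsilon$ for all $a\in F$, and for $\ell\geq 1$ let $\varphi_\ell$ and $(\psi_{F,\epsilon})_\ell$ denote the corresponding inflations to $M_\ell$. I will treat the rationally $K$-stable case, the $K$-stable case being word for word the same with $G_m$ in place of $F_m$. So fix $m\geq 1$ and $j\geq 2$; the goal is to show that $F_m(\iota_j):F_m(M_{j-1}(A))\to F_m(M_j(A))$ is an isomorphism, where $\iota_j$ denotes the corner inclusion over whichever algebra is relevant.

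Two elementary facts drive the argument. First, inflations commute with corner inclusions, $\iota_j\circ\varphi_{j-1}=\varphi_j\circ\iota_j$ and $\iota_j\circ(\psi_{F,\epsilon})_{j-1}=(\psi_{F,\epsilon})_j\circ\iota_j$, so by functoriality the induced maps on $F_m(-)$ satisfy the analogous relations. Second, since $\psi_{F,\epsilon}$ is a $\ast$-homomorphism, so is $(\psi_{F,\epsilon})_\ell\circ\varphi_\ell:M_\ell(A)\to M_\ell(A)$; in particular it sends quasi-unitaries to quasi-unitaries and $\U_0$ to $\U_0$. If $F\subset A$ is compact, $\epsilon<2/\ell^2$, and $f\in\U(C_\ast(S^m,M_\ell(A)))\cong C_\ast(S^m,\U_0(M_\ell(A)))$ is a quasi-unitary all of whose matrix entries take values in $F$, then $\|((\psi_{F,\epsilon})_\ell\circ\varphi_\ell)\circ f-f\|\leq \ell^2\epsilon<2$ in $C_\ast(S^m,M_\ell(A))$, so \cref{lem:quasi_homotopy} gives $[((\psi_{F,\epsilon})_\ell\circ\varphi_\ell)\circ f]=[f]$ in $\pi_m(\U_0(M_\ell(A)))$; hence $((\psi_{F,\epsilon})_\ell\circ\varphi_\ell)_\ast$ fixes the class $[f]\otimes q\in F_m(M_\ell(A))$.

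Granting these, injectivity and surjectivity are short diagram chases. For injectivity, let $x=[f]\otimes q\in\ker F_m(\iota_j)$ and choose $\psi=\psi_{F,\epsilon}$ with $F$ the (compact) set of matrix entries of $f$ and $j^2\epsilon<2$, so that $(\psi_{j-1}\circ\varphi_{j-1})_\ast x=x$ by the second fact (applied at matrix level $j-1$). Applying $(\varphi_j)_\ast$ to the relation $F_m(\iota_j)(x)=0$ and using the first fact gives $F_m(\iota_j)((\varphi_{j-1})_\ast x)=0$ in $F_m(M_j(B))$; since $B$ is rationally $K$-stable, $F_m(\iota_j)$ is injective on $F_m(M_{j-1}(B))$, so $(\varphi_{j-1})_\ast x=0$, and therefore $x=(\psi_{j-1})_\ast(\varphi_{j-1})_\ast x=0$. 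For surjectivity, let $x=[g]\otimes q\in F_m(M_j(A))$ and choose $\psi=\psi_{F,\epsilon}$ with $F$ the matrix entries of $g$ and $j^2\epsilon<2$. Rational $K$-stability of $B$ yields $y\in F_m(M_{j-1}(B))$ with $F_m(\iota_j)(y)=(\varphi_j)_\ast x$, and then the first and second facts give $F_m(\iota_j)((\psi_{j-1})_\ast y)=(\psi_j)_\ast(\varphi_j)_\ast x=(\psi_j\circ\varphi_j)_\ast x=x$.

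The only subtle point is the second fact: the $\ast$-homomorphism $\psi_{F,\epsilon}$ depends on the compact set $F$, so it must be chosen only after a representing map $f$ (or $g$) has been fixed, with $F$ taken to contain its image, and the closeness estimate must be arranged uniformly over $S^m$; this uniformity is exactly where compactness of $S^m$ enters. Apart from this, everything is functoriality of $F_m$ (resp.\ $G_m$) combined with \cref{lem:quasi_homotopy}. One could also deduce the proposition from the sequence-algebra reformulation of sequentially split maps due to Barlak and Szabo, but the direct argument above is self-contained.
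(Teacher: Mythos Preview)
Your proof is correct and follows the same approach as the paper's: both exploit the commutation $\iota_j\circ\varphi_{j-1}=\varphi_j\circ\iota_j$ together with \cref{lem:quasi_homotopy} to see that the inflation of an approximate inverse fixes the homotopy class of any chosen representative. Your surjectivity argument is in fact slightly cleaner, since you use only the inflation $(\psi_{F,\epsilon})_j$ of a single $\psi_{F,\epsilon}:B\to A$, whereas the paper introduces a second approximate inverse $\psi_F:M_2(B)\to M_2(A)$ for $\varphi_2$ and then compares it with $(\psi_K)_2$; that extra step is not needed.
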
   
\begin{proof}
Since the proof of both cases is entirely similar, we only prove that rational $K$-stability passes from $B$ to $A$. As before, we need to show that the map
\[
(\iota^A)_{\ast}\otimes \text{id}:\pi_j(\U_0(M_n(A)))\otimes\Q\to\pi_j(\U_0(M_{n+1}(A)))\otimes \Q 
\]
is an isomorphism for all $j\geq 1$, and $n\geq 1$. If $\varphi : A\to B$ is sequentially split, then so is $\varphi_n : M_n(A)\to M_n(B)$, so we may assume without loss of generality that $n=1$. \\

We first show that $(\iota^A)_{\ast}\otimes \text{id}$ is injective. So suppose $[f]\otimes q\in \pi_j(\U_0(A))\otimes \Q$ is such that $[\iota^A\circ f]\otimes q=0$ in $\pi_j(\U_0(M_2(A)))$. Then, $[\iota^A\circ f]$ has finite order in $\pi_j(\U_0(M_2(A)))$, which implies $[\varphi_2\circ\iota^A\circ f]=[\iota^B\circ\varphi\circ f]$ has finite order in $\pi_j(\U_0(M_2(B))\,)$. Since $B$ is rationally $K$-stable, $[\varphi\circ f]$ also has finite order in $\pi_j(\U_0(B))$. Let $F := \{f(x):\, x\in S^j\}$, which is a compact set in $A$, so there exists a $\ast$-homomorphism $\psi = \psi_{F,1}:B\rightarrow A$ such that $\|\psi \circ\varphi(a)-a\|<1$ for all $a\in F$. Hence,
\[
\|\psi\circ\varphi\circ f-f\|<1
\]
in $\U(C_{\ast}(S^j,A))$. Thus, by \cref{lem:quasi_homotopy}, we conclude that
\[
[\psi \circ\varphi\circ f]=[f]
\]
in $\pi_j(\U_0(A))$. However, since $[\varphi\circ f]$ has finite order in $\pi_j(\U_0(B))$, $[\psi\circ\varphi\circ f]=[f]$ has finite order in $\pi_j(\U_0(A))$. Hence, $[f]\otimes q = 0$, proving that $(\iota^A)_{\ast}\otimes \text{id}$ is injective.\\

For surjectivity, fix an element $[u] \in \pi_j(\U_0(M_2(A))$ and $m\in \Z$. We wish to construct elements $[\omega]\in \pi_j(\U_0(A))$ and $q \in \Q$ such that 
\[
((\iota^A)_{\ast}\otimes \text{id})\left([\omega]\otimes q\right)=[u]\otimes\frac{1}{m}.
\]
Now, $[\varphi_2\circ u]\otimes\frac{1}{m}\in\pi_j(\U_0(M_2(B)))\otimes\Q$. Since $B$ is rationally $K$-stable, there exists $[g]\in\pi_j(\U_0(B))$ and $n\in \Z$ such that
\[
((\iota^B)_{\ast}\otimes\text{id})\left([g]\otimes\frac{1}{n}\right)=[\varphi_2\circ u]\otimes\frac{1}{m}.
\]
Again, as in previous calculations, there exists $N_1,N_2\in \N$ such that
\begin{equation}\label{equal}
N_1[\iota^B\circ g]=N_2[\varphi_2\circ u]
\end{equation}
in $\pi_j(\U_0(M_2(B)))$. Now, fix $F :=\{u(x): x\in S^j\}$, so we get a $\ast$-homomorphism $\psi_F:M_2(B)\rightarrow M_2(A)$ such that $\|\psi_F\circ\varphi_2\circ u(x)-u(x)\|<\frac{1}{2}$ for all $x\in S^j$. Hence,
\begin{equation}\label{equality}
[\psi_F\circ\phi_2\circ u]=[u]
\end{equation}
in $\pi_j(\U_0(M_2(A)))$. Now, we write $u = (u_{i,j})_{1\leq i,j\leq 2}$, and take $K=\{u_{i,j}(x): 1\leq i,j\leq 2, x\in S^j\}\subset A$. Then $K$ is compact, so we get a $\ast$-homomorphism $\psi_K:B\rightarrow A$ such that 
\[
\|\psi_K\circ\varphi\circ u_{i,j}(x)-u_{i,j}(x)\|<\frac{1}{8}
\]
for all $x\in S^j$ and $1\leq i,j\leq 2$. Thus $\|(\psi_K)_2\circ\varphi_2\circ u(x)-u(x)\|<\frac{1}{2}$ for all $x\in S^j$. Therefore, $\|\psi_F\circ\varphi_2\circ u(x)-(\psi_K)_2\circ\phi_2\circ u(x)\|<1$ for all $x\in S^j$, so that $[\psi_F\circ\varphi_2\circ u]=[(\psi_K)_2\circ\varphi_2\circ u]$ in $\pi_j(\U_0(M_2(A)))$. Now, from \cref{equal} and \cref{equality},
\[
 N_1[(\psi_K)_2\circ \iota^B\circ g] = N_2[(\psi_K)_2\circ\phi_2\circ u] =  N_2[\psi_F\circ\phi_2\circ u] = N_2[u].
\]
Since $(\psi_K)_2\circ\iota^B\circ g=\iota^A\circ\psi_K\circ g$, we have
\[
N_1[\iota^A\circ\psi_K\circ g]=N_2[u].
\]
Therefore, if $\omega :=\psi_K\circ g$ and $q := \frac{N_1}{N_2m}$, then
\[
(\iota^A)_{\ast}\otimes\text{id}\left([\omega ]\otimes q\right)=[u]\otimes\frac{1}{m}
\]
proving that $(\iota^A)_{\ast}\otimes \text{id}$ is surjective.
\end{proof}

We are now in a position to complete the proof of \cref{main_thm: rokhlin}.

\begin{cor}
Let $\alpha:G\to \text{Aut}(A)$ be an action of a compact Lie group on a separable C*-algebra $A$ such that $\dim_{Rok}^c(\alpha) < \infty$. If $A$ is rationally $K$-stable ($K$-stable), then so is $A\rtimes_{\alpha} G$.
\end{cor}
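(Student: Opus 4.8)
The plan is to assemble the three structural results already in hand: the local approximation theorem (\cref{thm: seq_split_rokhlin_dimension}), the fibre description (\cref{prop: target_field}), and the two permanence statements (\cref{main_thm} and \cref{prop: sequentially_split}). First, a compact Lie group is second countable and finite-dimensional, so \cref{thm: seq_split_rokhlin_dimension} applies to $\alpha$: from $\dim_{Rok}^c(\alpha) < \infty$ we obtain a \emph{finite-dimensional} compact metric space $X$ together with a free action $G\curvearrowright X$ such that the canonical embedding
\[
\rho : A\rtimes_{\alpha} G \to C(X,A)\rtimes_{\gamma} G
\]
is sequentially split. Writing $B := C(X,A)\rtimes_{\gamma} G$, \cref{prop: target_field} then tells us that $B$ is a continuous $C(X/G)$-algebra, each of whose fibres is isomorphic to $A\otimes \mathcal{K}(L^2(G))$.

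The second step is to check that $B$ satisfies the hypotheses of \cref{main_thm}. The base space $X/G$ is the quotient of a compact metric space by a compact group action, hence is again compact and metrizable; and since $\dim(X) < \infty$, its quotient $X/G$ has finite covering dimension as well. For the fibres, note that $G$ compact and second countable forces $L^2(G)$ to be separable, so $\mathcal{K}(L^2(G))\cong\mathcal{K}$; since tensoring by $\mathcal{K}$ preserves rational $K$-stability (this is immediate from $M_n(A\otimes\mathcal{K})\cong A\otimes\mathcal{K}$ together with the continuity of the functors $F_m$ from \cref{prop: continuous_homology}), every fibre $A\otimes\mathcal{K}(L^2(G))$ is rationally $K$-stable. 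Hence \cref{main_thm} applies and yields that $B$ is rationally $K$-stable. For the $K$-stable assertion one runs the identical argument, invoking \cite[Theorem A]{vaidyanathan} in place of \cref{main_thm}.

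Finally, \cref{prop: sequentially_split} applied to the sequentially split $\ast$-homomorphism $\rho : A\rtimes_{\alpha} G \to B$ transfers rational $K$-stability (respectively $K$-stability) from $B$ back to $A\rtimes_{\alpha} G$, which is precisely the conclusion.

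Since every ingredient has already been proved, the only genuinely delicate points are bookkeeping: confirming that a compact Lie group meets the second-countability and finite-dimensionality requirements that make $X$ finite-dimensional in \cref{thm: seq_split_rokhlin_dimension}, and verifying that $X/G$ remains compact, metrizable, and of finite covering dimension so that \cref{main_thm} is applicable. The rational $K$-stability of the fibre $A\otimes\mathcal{K}(L^2(G))$ should be recorded explicitly, as it is the hinge that converts the hypothesis on $A$ into the fibrewise hypothesis needed to feed \cref{main_thm}; beyond that, no new estimates or homotopies are required.
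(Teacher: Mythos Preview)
Your proposal is correct and follows essentially the same route as the paper's own proof, which cites \cite{palais_g_space} for the inequality $\dim(X/G)\le\dim(X)$ and otherwise assembles \cref{thm: seq_split_rokhlin_dimension}, \cref{prop: target_field}, \cref{main_thm} (resp.\ \cite[Theorem A]{vaidyanathan}), and \cref{prop: sequentially_split} exactly as you do. One minor caveat: when $G$ is finite, $L^2(G)$ is finite-dimensional and $\mathcal{K}(L^2(G))\cong M_{|G|}(\C)$ rather than $\mathcal{K}$, but the fibre $M_{|G|}(A)$ is still (rationally) $K$-stable directly from \cref{defn:k_stable}, so your conclusion is unaffected.
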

\begin{proof}
We first discuss the case of $K$-stability: Let $X$ be the (finite dimensional) metric space space obtained from \cref{thm: seq_split_rokhlin_dimension}. By \cref{prop: target_field}, $C(X,A)\rtimes_{\gamma} G$ is a continuous $C(X/G)$-algebra, each of whose fibers are isomorphic to $A\otimes \mathcal{K}(L^2(G))$, and are hence $K$-stable. Since $X$ is compact and metrizable, so is $X/G$. Furthermore, since $G$ is a compact Lie group, it follows that
\[
\dim(X/G) \leq \dim(X) < \infty
\]
by \cite[Corollary 1.7.32]{palais_g_space}. By \cite[Theorem A]{vaidyanathan}, we conclude that $C(X,A)\rtimes_{\gamma} G$ is $K$-stable, and hence $A\rtimes_{\alpha} G$ is $K$-stable by \cref{prop: sequentially_split}. \\

The argument for rational $K$-stability is entirely similar, except that we apply \cref{main_thm} instead of \cite[Theorem A]{vaidyanathan}.
\end{proof}
\section*{Acknowledgements}
The first named author is supported by UGC Junior Research Fellowship No. 1229, and the second named author was partially supported by the SERB (Grant No. MTR/2020/000385).

\end{document}